\newtheorem{theorem}{Theorem}[section]
\newtheorem{lemma}[theorem]{Lemma}
\newtheorem{definition}[theorem]{Definition}
\theoremstyle{corollary}
\newtheorem{corollary}[theorem]{Corollary}
\theoremstyle{conjecture}
\newtheorem*{claim}{Claim}
\theoremstyle{assumption}
\theoremstyle{proposition}
\newtheorem{proposition}[theorem]{Proposition}
\theoremstyle{remark}
\newtheorem{remark}[theorem]{Remark}
\numberwithin{equation}{section}
\newcommand{\olsi}[1]{\,\overline{\!{#1}}} % olsi short italic
\newcommand{\ols}[1]{\mskip.5\thinmuskip\overline{\mskip-.5\thinmuskip {#1} \mskip-.5\thinmuskip}\mskip.5\thinmuskip} % overline short
\newcommand{\ca}{\mathcal C}
\newcommand{\ii}{\ensuremath{\sqrt{-1}}}
\newcommand{\pp}{\bar\partial}
\newcommand{\C}{\mathbb{C}}
\DeclareMathOperator{\vol}{Vol}
\DeclareMathOperator{\Rm}{Rm}
\DeclareMathOperator{\Ric}{Ric}
\begin{document}

\title[Polynomial convergence to tangent cones]{On polynomial convergence to tangent cones for singular K\"ahler-Einstein metrics}
\author{Junsheng Zhang}
\address{Department of Mathematics\\
 University of California\\
 Berkeley, CA, USA, 94720\\}
\curraddr{}
\email{jszhang@berkeley.edu}
\thanks{}

\maketitle
\begin{abstract}
Let $(Z,p)$ be a pointed Gromov-Hausdorff limit of non-collapsing K\"ahler-Einstein metrics with uniformly bounded Ricci curvature.
We show that the singular K\"ahler-Einstein metric on $Z$ is conical at $p$ if and only if $\ca=W$ in Donaldson-Sun's two-step degeneration theory, assuming curvature grows at most quadratically near $p$.

Let $(X,p)$ be a germ of an isolated log terminal algebraic singularity. Following Hein-Sun's approach, we show that if $\ca=W$ in the two-step stable degeneration of $(X,p)$ and $\ca$ has a smooth link, then every singular K\"ahler-Einstein metric  on $X$ with non-positive Ricci curvature and bounded potential is conical at $p$.
\end{abstract}

\section{Introduction}\label{s1}
In this paper, we study the geometry of singular K\"ahler-Einstein metrics. Such metrics arise from two main sources: Gromov-Hausdorff limits of smooth K\"ahler-Einstein metrics and pluripotential theory.
 Similar to other problems in geometric analysis, a crucial approach to understanding the structure of a singular K\"ahler-Einstein metric near its singularity involves examining its tangent cones. This process typically involves three steps:
\begin{itemize}
\item[(1).] The existence of tangent cones;
\item [(2).] The uniqueness of tangent cones;
\item [(3).] Relate the geometry of $Z$ near $p$ with the geometry on the tangent cone at $p$. 
\end{itemize} In this paper, we make progress on step (3) under certain assumptions that guarantee the existence and uniqueness of tangent cones.

Firstly we consider singular K\"ahler-Einstein metrics coming from the Gromov-Hausdorff limits. Let $(Z, p, d_Z)$ be a pointed Gromov-Hausdorff limit of complete non-collapsing K\"ahler-Einstein manifolds with uniformly bounded Ricci curvature. According to \cite{ChCo1,An,CCT,CN2012}, there is a decomposition $Z=Z^{reg}\sqcup Z^{sing}$, where the regular part $Z^{reg}$ consists of points around which a neighborhood is a smooth Riemannian manifold. This $Z^{reg}$ is an open, connected, smooth manifold, and the restriction of $d_Z$ to $Z^{reg}$ is induced by a K\"ahler-Einstein metric $(g_Z, J_Z, \omega_Z)$, which is referred to as the singular K\"ahler-Einstein metric on $Z$.   
    
   If one further assumes that the smooth K\"ahler-Einstein metrics are polarized, i.e. the K\"ahler forms are curvature forms of line bundles, and that the canonical bundle is homomorphically trivial for Ricci-flat metrics, then  Donaldson and Sun \cite{DS1,DS2} showed that $Z$ has the structure of a normal analytic space with log terminal singularities and the tangent cone $\ca$ at $p$ is unique. Furthermore, a two-step degeneration exists, proceeding from the local ring $\mathcal{O}_{Z,p}$ through an \emph{intermediate K-semistable cone} $W$ to $\ca$. This is referred to as Donaldson-Sun's 2-step degeneration theory. 
  It is an open problem whether Donaldson-Sun's 2-step degeneration theory holds without the polarization condition.

In this paper, we assume $p$ is an isolated singularity and that near $p$, the curvature grows at most quadratically. That is, there exists a constant $C>0$ such that the following holds near $p$
  \begin{equation}\label{eq--curvature decay}
  	|\Rm(\cdot)|\leq C d_Z(p,\cdot)^{-2}, 
  \end{equation}however the polarization condition is not required. The main result is as follows.
   \begin{theorem}\label{thm-main theorem}
   Suppose \eqref{eq--curvature decay} holds. Then Donaldson-Sun's 2-step degeneration theory applies. Moreover the singular K\"ahler-Einstein metric is conical at $p$ if and only $\ca$ is isomorphic to $W$ as Fano cones in Donalson-Sun's 2-step degeneration theory.	
 \end{theorem}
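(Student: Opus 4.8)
The plan is to prove the two implications separately, after first promoting the curvature hypothesis \eqref{eq--curvature decay} to the analytic input that replaces the polarization in Donaldson-Sun's arguments. Rescaling the annulus $A_{r,2r}(p)$ by $r^{-2}$ and combining \eqref{eq--curvature decay} with the Einstein condition and the standard $\varepsilon$-regularity theorem for Einstein metrics yields uniform $C^k$ bounds on these rescaled annuli for every $k$, so every tangent cone at $p$ is a metric cone $C(L)$ that is smooth off its apex, along which rescalings of $g_Z$ converge \emph{smoothly} on annuli. On dyadic annuli one then produces approximately holomorphic functions by solving $\bar\partial$ with polynomial weights (H\"ormander estimates with uniform constants), which is exactly the ingredient that the peak-section construction supplies under polarization in \cite{DS1,DS2}. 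From here Donaldson-Sun's construction goes through and yields: the normal analytic structure of $\mathcal{O}_{Z,p}$, the metric-defined filtration with associated graded ring $R(W)$, the $K$-semistability of the Fano cone $W$, the uniqueness of the tangent cone $\ca$ (which also follows from a Colding-Minicozzi-type argument under \eqref{eq--curvature decay}), and the degeneration $W\rightsquigarrow\ca$. This gives the first assertion, that Donaldson-Sun's 2-step theory applies.

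For ``conical $\Rightarrow\ca\cong W$'': if $g_Z$ is conical at $p$, then near $p$ it agrees with a K\"ahler cone metric on $C(L)$ up to an error of size $O(r^{\mu})$ for some $\mu>0$, and elliptic regularity on rescaled annuli upgrades this to convergence of the complex structure $J_Z$ at a polynomial rate as well. Consequently the orders of vanishing at $p$ computed with $g_Z$ coincide with those on $C(L)$, so the metric-defined filtration of $\mathcal{O}_{Z,p}$ has associated graded ring $R(C(L))$ with the same induced Reeb field; hence $W\cong C(L)$. Since the metric tangent cone is $\ca\cong C(L)$ as well, $W\cong\ca$ as Fano cones.

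The substantive direction is ``$\ca\cong W\Rightarrow$ conical'', and here the plan follows Hein-Sun's strategy. Fix an annular neighborhood of $p$; using the almost-holomorphic coordinates above, identify it with an annular piece of $\ca=C(L)$ so that $J_Z$ differs from $J_\ca$ by a term tending to $0$ after rescaling, and use a weighted $\bar\partial$-argument on annuli to straighten $J_Z$ to $J_\ca$ with an error controlled by the deviation of the metric. Writing $\omega_Z=\omega_\ca+i\partial\bar\partial\varphi$ in this gauge, the Einstein equation linearizes at the cone to $\Delta_\ca\varphi=Q(\varphi)$ with $Q$ quadratically small, and on $C(L)$ separation of variables reduces the homogeneous equation to the spectrum of the link Laplacian, i.e.\ to the indicial roots. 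The point of the hypothesis $\ca\cong W$ is precisely that the degeneration $W\rightsquigarrow\ca$ is trivial: the critical-rate modes that would otherwise be detected as a nontrivial $W\not\cong\ca$ (cf.\ the previous paragraph) are absent, so apart from the modes coming from automorphisms of the $K$-polystable cone $\ca$, which can be absorbed into a further change of gauge, there is a genuine spectral gap above rate $0$. Inserting this gap into a three-annuli / frequency-monotonicity inequality --- or a \L{}ojasiewicz-Simon inequality for the associated functional, in the spirit of Colding-Minicozzi --- produces a uniform geometric decay $\|\varphi\|_{C^{2,\alpha}(A_{r,2r})}=O(r^{\mu})$, i.e.\ polynomial convergence of $\omega_Z$ to the cone metric, which is the statement that $g_Z$ is conical at $p$.

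I expect the main difficulties to lie entirely in this last step. First, the link $L$ of $\ca$ is not assumed smooth, so the eigenfunction estimates, the elliptic theory on annuli, and even the notion of indicial root must be carried out on a possibly singular K\"ahler-Einstein space, using Cheeger-Colding theory and the quadratic curvature decay in place of classical elliptic regularity. Second, one must identify exactly the automorphism-induced critical modes and perform the matching gauge change, which uses the Reeb structure of the polystable cone $\ca$. Third, and most delicate, one has to rule out a logarithmically slow approach: by the ``conical $\Rightarrow\ca\cong W$'' direction a failure of polynomial decay would be registered by a nontrivial $W\rightsquigarrow\ca$, so the hypothesis $\ca\cong W$ has to be fed into the iteration quantitatively, and making that mechanism precise --- the analogue, in the presence of an intermediate cone, of Colding-Minicozzi's \L{}ojasiewicz inequality --- is the heart of the argument.
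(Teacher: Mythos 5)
Your set-up of Donaldson--Sun theory under \eqref{eq--curvature decay} and your sketch of the direction ``conical $\Rightarrow \ca\cong W$'' are broadly in the spirit of the paper, although even there you skip the essential step: knowing that $J_Z$ is polynomially close to $J_\ca$ does not by itself show that the orders of vanishing, let alone the graded ring, are those of $R(\ca)$; one must actually produce $J_Z$-holomorphic functions whose rescalings converge to the homogeneous generators $h_i$ of $R(\ca)$, which is done in Section \ref{polynomial convergence imply} by solving $\pp_{J_Z}u_i=\pp_{J_Z}h_i$ with the weight $\psi=\log r^2-(-\log r^2)^{1/2}$ so that the correction $u_i$ decays strictly faster than $r^{d_i}$. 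Also note that under \eqref{eq--curvature decay} the link of $\ca$ is automatically smooth (the condition is equivalent to a tangent cone having smooth link), so your first anticipated difficulty is moot.

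The genuine gap is in the direction ``$\ca\cong W\Rightarrow$ conical'', where your mechanism is both different from the paper's and, as you concede, unproved at its core --- and I believe it is conceptually off. The hypothesis $\ca\cong W$ is not a spectral statement: the indicial roots and the link Laplacian depend only on $\ca$, and the counterexamples of Section \ref{sec-examples} have the same smooth-linked cone $\ca$ (hence identical ``spectral gap'' data) yet fail polynomial convergence precisely because $W\neq\ca$; so ``absence of critical modes'' cannot be what $\ca\cong W$ buys you, and feeding it into a three-annuli or \L{}ojasiewicz iteration has no evident quantitative handle. Moreover you cannot in general write $\omega_Z=\omega_\ca+\ii\partial\pp\varphi$ in a common gauge, since the germ $(Z,p)$ need not be biholomorphic to $(\ca,o)$ --- this is exactly the case the theorem is designed to cover, and ``straightening $J_Z$ to $J_\ca$ by a weighted $\pp$-argument'' is obstructed in general. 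The paper instead uses $\ca\cong W$ \emph{algebraically}: after Lemma \ref{lemm--W is the same} one may take all the weighted tangent cones $W_j$ equal to $\ca$ inside $\C^N$, so the defining equations of $Z$ differ from those of $\ca$ only by strictly higher-weight terms; the normal projection then gives a diffeomorphism with $|\nabla^k(\Phi^*J_Z-J_\ca)|=O(r^{\delta_0-k})$ (Lemma \ref{lemma: complex structure close}). One then transfers a K\"ahler potential of $\omega_Z$ to $(\ca,J_\ca)$, corrects it by solving a local complex Monge--Amp\`ere equation in weighted H\"older spaces (Proposition \ref{prop--solve ma to get calabi-yau}) to obtain a genuine Calabi--Yau metric on a neighborhood of $o$ in $\ca$ polynomially close to $\omega_Z$, and only then quotes the Hein--Sun/Chiu--Sz\'ekelyhidi theorem (Theorem \ref{polynomail close of the intermediate calabi-Yau metric}) for polynomial closeness to $\omega_\ca$; the deep \L{}ojasiewicz-type analysis is imported there, not re-derived. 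Without an argument replacing this chain --- in particular, without a concrete way in which $\ca\cong W$ enters --- your proposal for the substantive implication does not go through.
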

   
   We refer to Section \ref{sec-terminologies} for the definition of Fano cones and to Section \ref{sec--ds theory} for further details on Donaldson-Sun's 2-step degeneration theory. We will simply use $\ca=W$ to denote that $W$ is isomorphic to $\ca$ as Fano cones and in the following.
    We remark that assuming \eqref{eq--curvature decay}, Donaldson-Sun's 2-step degeneration theory in this context (without the polarized condition) can be directly achieved using results in \cite{Liu} and following the original argument in \cite{DS2}. The essence of the result lies in establishing the algebraic criterion for the polynomial convergence rate. We also remark that, as a consequence of the results in \cite{Colding,CM}, condition \eqref{eq--curvature decay} is equivalent to stating that a tangent cone at $p$ has a smooth link, which is an a priori weaker condition.
   
In this paper, we use the terms ``being conical" and ``polynomially close to a Calabi-Yau cone" interchangeably, which means the following definition holds for some $\delta>0$.
   \begin{definition}\label{def--conical}
 	Let $(Z, J_Z, p)$ be a germ of normal isolated singularity. A K\"ahler metric $(g_Z,J_Z,\omega_Z)$ is said to be conical of order $\delta>0$
 	at $p$, if there exists a Calabi-Yau cone with smooth link $(\ca,o,g_\ca,\omega_\ca,J_\ca)$ and a diffeomorphism $\Phi: (U_o,o)\rightarrow (U_p,p)$ between neighborhoods $U_o$ and $U_p$ of $o$ and $p$ respectively, such that as $r\rightarrow 0$,  for all $k\geq0$, 
  \begin{equation}\label{eq_polyclose}
  	r^k\left(|\nabla^k_{g_{\ca}}(\Phi^*g_Z-g_{\ca})|_{g_{\ca}}+|\nabla^k_{g_{\ca}}(\Phi^*J_Z-J_{\ca})|+|\nabla^k_{g_{\ca}}(\Phi^*\omega_Z-\omega_{\ca})|_{g_{\ca}}\right)=O(r^{\delta}).
  \end{equation} 
 \end{definition}

  It is conjectured in \cite{DS2} that both $W$ and $\ca$ depend only on the germ of the singularity $(Z,p)$, rather than the metric. For algebraic singularities, this conjecture has been confirmed and generalized by Li-Xu and Li-Wang-Xu; see \cite{LX,LWX} and the references therein.
  By this result in algebraic geometry and considering  cones over strictly K-semistable Fano manifolds, we obtain examples $(Z,p)$ that are not conical.
     \begin{corollary}\label{thm-consequence}
  	There are examples of $(Z,p)$, which are pointed Gromov-Hausdorff limits of non-collapsing Calabi-Yau metrics. While the tangent cone $\ca$ at $p$ has a smooth link and is unique, $Z$ is not conical at $p$.
  \end{corollary}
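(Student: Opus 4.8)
The plan is to combine Theorem~\ref{thm-main theorem} with the algebraicity and uniqueness of the two-step degeneration, so that everything reduces to exhibiting one isolated klt singularity whose two cones $W$ and $\ca$ genuinely differ. By the resolution of the stable degeneration conjecture (\cite{LX,LWX}), for an isolated klt singularity the intermediate cone $W$ is the degeneration induced by the (rescaling-unique) minimizer of the normalized volume functional $\widehat{\vol}$, and $\ca$ is the K-polystable degeneration of the K-semistable Fano cone $W$; both depend only on the germ, and $W$ and $\ca$ agree as Fano cones exactly when $W$ is already K-polystable. So the task I would set up has two parts: (i) produce an isolated klt singularity $(X,o)$ with $W$ strictly K-semistable and with $\ca$ having a smooth link; (ii) realize $(X,o)$ as the germ at a point $p$ of a non-collapsed pointed Gromov--Hausdorff limit $(Z,p)$ of complete Calabi--Yau metrics with a fixed polarization. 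Granting these, the tangent cone $\ca$ at $p$ has a smooth link and is unique, so by the equivalence recalled after \eqref{eq--curvature decay} the bound \eqref{eq--curvature decay} holds at $p$ and Theorem~\ref{thm-main theorem} applies; since $\ca\ne W$, the singular Calabi--Yau metric on $Z$ is not conical at $p$, which is Corollary~\ref{thm-consequence}.

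For (i) I would take $X:=C(V)=\operatorname{Spec}\bigoplus_{m\ge 0}H^0(V,-mK_V)$, the anticanonical affine cone over a \emph{smooth} Fano manifold $V$ that is strictly K-semistable (K-semistable but not K-polystable) and whose unique K-polystable degeneration $V_0$ is again smooth; such $V$ exist --- e.g. among smooth Fano threefolds, so that $X$ has complex dimension $4$ --- using the classification of K-(semi/poly)stability of Fano manifolds, and then $X$ has an isolated klt singularity at the vertex $o$. By the characterization of K-semistability through normalized volume (\cite{LX} and the references there), the canonical valuation $\operatorname{ord}_V$ minimizes $\widehat{\vol}_{X,o}$ precisely because $V$ is K-semistable, so $W\cong C(V)$; while the K-polystable degeneration of the Fano cone $C(V)$, with its standard Reeb field, is the Fano cone $C(V_0)$, so $\ca\cong C(V_0)$. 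An isomorphism $C(V)\cong C(V_0)$ of polarized cones would force $V\cong V_0$, impossible since $V$ is not K-polystable while $V_0$ is; hence $\ca\ne W$. Finally $\ca=C(V_0)$ has smooth link because $V_0$ is smooth, and $\ca$ is unique by Donaldson--Sun's theory in the polarized setting (equivalently by \cite{LWX}).

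For (ii) I would globalize the germ: build a projective Calabi--Yau variety $\hat Z$ with trivial canonical class whose only singular point is analytically isomorphic to $(X,o)$ and which admits a $\mathbb{Q}$-Gorenstein smoothing $\hat{\mathcal Z}\to\Delta$ with trivial relative canonical class, equip the smooth fibers with Ricci-flat K\"ahler metrics in a fixed polarization, and invoke the convergence theory for polarized Calabi--Yau degenerations (Rong--Zhang, Donaldson--Sun) to identify the pointed Gromov--Hausdorff limit of the smooth fibers with $(\hat Z,o)$ carrying its singular K\"ahler--Einstein metric; the limit is non-collapsed since the volumes are fixed, so $(Z,p):=(\hat Z,o)$ does the job. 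An alternative is to begin with a compact Calabi--Yau variety having a conical $C(V_0)$-singularity realized as such a limit, in the spirit of Hein--Sun, and then deform its complex structure so that the germ becomes the nearby deformation $C(V)$ of $C(V_0)$, which should stay within the class of Gromov--Hausdorff limits of smooth Calabi--Yau metrics while switching $W$ from $\ca=C(V_0)$ to $C(V)$. I expect the genuine obstacle to be exactly this realization step --- exhibiting an explicit strictly K-semistable smooth $V$ with smooth K-polystable degeneration, together with a smoothable projective Calabi--Yau model carrying precisely the prescribed isolated singularity, so that the hypotheses of the convergence results are literally met. By contrast the algebraic computation $W=C(V)\ne C(V_0)=\ca$ and the concluding appeal to Theorem~\ref{thm-main theorem} are formal.
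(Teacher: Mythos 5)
Your algebraic skeleton is the same as the paper's: choose a smooth strictly K-semistable Fano $V$ (the paper's $D$) degenerating to a smooth K-polystable $V_0$ (the paper's $D'$), so that for the germ of the anticanonical cone one has $W=K_{V}^{\times}\neq K_{V_0}^{\times}=\ca$ with $\ca$ having a smooth link, and then conclude non-conicality from Theorem \ref{thm-main theorem} (really from Section \ref{polynomial convergence imply}) together with the germ-dependence of $W$ and $\ca$ from \cite{LX,LWX}. That part is fine. The genuine gap is exactly the step you flag yourself: realizing the germ as a pointed Gromov--Hausdorff limit of non-collapsing Calabi--Yau metrics. Your route requires producing a projective Calabi--Yau variety whose unique singularity is analytically the cone $C(V)$ and which admits a polarized ($\mathbb{Q}$-Gorenstein) smoothing; you do not exhibit such a model, and this is a nontrivial global algebro-geometric problem --- it is precisely the smoothability hypothesis of Hein--Sun type that the paper is at pains to avoid. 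Since the corollary asserts the \emph{existence} of examples, leaving this realization as an ``expected obstacle'' means the statement is not actually proved by your argument; the alternative you sketch (deforming the complex structure of a compact Calabi--Yau with a $C(V_0)$-cone singularity) is likewise not carried out and faces the same global existence issue.

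The paper's construction in Section \ref{sec-examples} is local and bypasses any global model or smoothing of the complex structure. One works on the fixed smooth total space $K_D$, takes a strictly pseudoconvex neighborhood $\mathcal U=\{\psi<\delta\}$ of the zero section, and solves the Dirichlet Monge--Amp\`ere problems \eqref{eq-cma} with degenerating K\"ahler classes $[\omega_0+\epsilon\eta]$, where $\omega_0$ is pulled back from the blow-down $K_D^{\times}\subset\mathbb C^N$. The solutions are genuine smooth, incomplete, Ricci-flat K\"ahler metrics (the paper notes in Section \ref{sec--ds theory} that completeness is irrelevant as long as unit balls are compactly contained); uniform $L^\infty$ bounds on the potentials, interior estimates away from $D$, a Yau-type diameter bound and Bishop--Gromov then give non-collapsing, and the pointed Gromov--Hausdorff limit as $\epsilon\to 0$ is identified with $\pi(\ols{\mathcal U})$, whose germ at $p$ is $(K_D^{\times},o)$. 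So the ``hard'' step you defer is replaced by a family of boundary-value problems on a fixed smooth manifold with a degenerating K\"ahler class, after which the algebraic identification $W=K_D^\times\neq K_{D'}^\times=\ca$ and the implication of Section \ref{polynomial convergence imply} finish the proof, exactly as in your concluding paragraph. If you want to salvage your global approach you would need to actually construct the smoothable projective Calabi--Yau with the prescribed isolated singularity, which is a substantially harder input than anything required by the local argument.
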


  This highlights the sharpness of the logarithmic rate result proved by Colding-Minicozzi \cite{CM} within the context of K\"ahler geometry, as initially predicted by Hein-Sun \cite{HS}. We remark that in \cite{CM}, the logarithmic convergence rate is also established for tangent cones at infinity, however it is proved in \cite{SZ2023} that every complete Calabi-Yau metric with Euclidean volume growth and quadratic curvature decay is polynomially close to its tangent cone at infinity. This reveals  distinctions between the two settings.

    Then let us compare Theorem \ref{thm-main theorem} with some results in the literature. In \cite{HS}, assuming the germ $(Z,p)$ is biholomorphic to $(\ca,o)$ and $\ca$ has a smooth link, Hein-Sun proved that the $(Z,p)$ is polynomially close $(\mathcal C,o)$ and indeed in \eqref{eq_polyclose}, $\Phi$ can be chosen to be a biholomorphism. In \cite{CSz}, assuming $(Z,p)$ is a polarized limit and the germ $(Z,p)$ is biholomorphic to $(\ca,o)$, Chiu-Sz\'ekelyhidi proved a polynomial rate closeness on the level of K\"ahler potentials without assuming any smoothness on $\ca$.  Although we assume $\ca$ has a smooth link, we are able to address the case where the germ $(Z,p)$ is not biholomorphic to $(\ca,o)$ and provide a characterization for the polynomial closeness. The direction from the polynomial closeness to $\ca=W$ is established directly by constructing holomorphic functions using H\"ormander $L^2$-method. The direction from $\mathcal C=W$ to polynomial closeness uses a approach similar to that in \cite{SZ2023} and relies on the result in \cite{HS,CSz}. See Section \ref{imply polynomial convergence} for more details. 
  \vspace{0.3cm}

  Then we move to singular K\"ahler-Einstein metrics coming from pluripotential theory.  Singular K\"ahler-Einstein metrics with non-positive Ricci curvature on log terminal K\"ahler varieties have been constructed in \cite{EGZ}; see \cite{BBEGZ, BG2014, LiChi} and references therein for further studies. Understanding the metric behavior of the singular K\"ahler-Einstein metrics near the singular locus is a major open problem. The main progress in this direction is due to Hein-Sun \cite{HS}. Following the approach in \cite{HS}, we extend their results to a broader class of singular K\"ahler-Einstein metrics.

  Let $(X,p)$ be a germ of an isolated log terminal algebraic singularity. Motivated by the results in \cite{DS2}, there is a well-developed theory for local stability of (Kawamata) log terminal singularities; see \cite{LL,LX,LWX,Zhuang2023} and the references therein. Specifically, this local stability theory establishes the existence of a K-semistable Fano cone $W$ and a K-polystable Fano cone $\mathcal{C}$, canonically associated with the singularity. Then we have the following.
\begin{theorem}\label{Theorem--local}
Suppose $\ca$ is isomorphic to $W$ as Fano cones and $\ca$ has a smooth link, then every singular  K\"ahler-Einstein metric with non-positive Ricci curvature and with bounded potential on the germ $(X,p)$, is conical at $p$.
\end{theorem}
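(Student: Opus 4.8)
The proof follows the pattern of the ``$\ca=W\Rightarrow$ conical'' implication in Theorem~\ref{thm-main theorem}, now in the fixed-germ pluripotential setting, following Hein-Sun.

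\emph{Step 1 (algebraic model and comparison cone).} By the local stability theory \cite{LL,LX,LWX,Zhuang2023}, the normalized volume of $(X,p)$ is minimized by a quasi-monomial valuation $v$ whose associated graded ring is $\mathcal O_{W,o}$. The hypothesis $\ca=W$ means that $W$ is already K-polystable, hence carries a Ricci-flat K\"ahler cone metric $\omega_\ca$ (Collins-Sz\'ekelyhidi); by assumption its link $(Y,g_Y)$ is smooth. Moreover $v$ provides a $\mathbb C^*$-equivariant degeneration of $(X,p)$ to $(\ca,o)$: choosing holomorphic germs $z_1,\dots,z_N$ at $p$ whose $v$-initial terms generate $\mathcal O_{\ca,o}$, one obtains embeddings $X\hookrightarrow\mathbb C^N\hookleftarrow\ca$ whose ideals coincide modulo terms of strictly higher $v$-weight. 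It then suffices to produce a diffeomorphism $\Phi$ as in Definition~\ref{def--conical} with model cone $(\ca,\omega_\ca)$.

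\emph{Step 2 (rough closeness and identification of the tangent cone).} Since $\omega_{KE}$ has non-positive Ricci curvature and bounded potential, its rescalings at $p$ are almost Ricci-flat, and the $L^\infty$-theory for complex Monge-Amp\`ere equations together with the algebraic structure of the germ yields uniform non-collapsed geometry, so Cheeger-Colding produces a Calabi-Yau metric tangent cone. Following Hein-Sun's analysis and matching this metric degeneration with the algebraic one --- via the uniqueness of the normalized-volume minimizer --- identifies the tangent cone with $(\ca,\omega_\ca)$; since $\ca$ has smooth link, Colding-Minicozzi \cite{CM} then gives uniqueness of the tangent cone and an a priori logarithmic convergence rate.

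\emph{Step 3 (from logarithmic to polynomial).} Following the strategy of \cite{SZ2023}, one studies holomorphic germs $f$ at $p$ with $|f|=O(r^{\alpha})$, where $r$ denotes the distance to $p$: a three-circle (frequency) inequality constrains the admissible rates $\alpha$, and combined with the $v$-initial term description of Step~1 this produces a holomorphic embedding of a neighborhood of $p$ into $\mathbb C^N$ whose image is polynomially close, as a complex subvariety, to $\ca\subset\mathbb C^N$ --- the condition $\ca=W$ being exactly what rules out a critical, slowly-decaying mode in the complex-structure deformation of $X$ away from $\ca$. In such coordinates $\omega_{KE}=\omega_\ca+i\partial\bar\partial\varphi$ with $\varphi\to0$, and the linearized complex Monge-Amp\`ere operator is $\Delta_{g_\ca}$ plus terms decaying in $r$, whose indicial roots are governed by the spectrum of $(Y,g_Y)$ and the $\mathbb C^*$-weights. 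Adapting the potential-level polynomial estimates of Chiu-Sz\'ekelyhidi \cite{CSz} and then the elliptic bootstrap of Hein-Sun \cite{HS} --- which uses the smooth link to run Schauder theory on the link --- upgrades the logarithmic bound to $\varphi=O(r^{\delta})$ in every $C^k_{g_\ca}$, and unwinding the construction produces the diffeomorphism $\Phi$ and the estimate \eqref{eq_polyclose}.

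The main obstacle is Step~3. Because $(X,p)$ need not be biholomorphic to $(\ca,o)$ --- in contrast to the situations treated in \cite{HS,CSz} --- the analysis must be carried out in weighted spaces adapted to the $\mathbb C^*$-grading of $\mathcal O_{\ca,o}$, and the crux is to show that $\ca=W$ forces every deformation mode to have weight strictly above the critical threshold; without this one recovers only a logarithmic rate, consistent with the non-conical examples in Corollary~\ref{thm-consequence}.
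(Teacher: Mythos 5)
Your Step 2 assumes exactly the hard part. The metric $\omega_{KE}$ comes from pluripotential theory; it is \emph{not} known to be a non-collapsed Gromov--Hausdorff limit of smooth K\"ahler--Einstein manifolds, so neither Cheeger--Colding structure theory nor Colding--Minicozzi's uniqueness/logarithmic-rate theorem applies to its rescalings at $p$ a priori. The existence and uniqueness of a metric tangent cone for such a metric, let alone any convergence rate, is precisely the open problem the theorem addresses; asserting ``Cheeger--Colding produces a Calabi--Yau metric tangent cone'' and then ``Colding--Minicozzi gives a logarithmic rate'' has no justification in this setting. Step 3 compounds this: the promised upgrade from logarithmic to polynomial decay via a three-circle/frequency argument and a ``critical mode'' analysis is exactly the content that would have to be proved, and no mechanism is given; note that even in the Gromov--Hausdorff setting of Theorem \ref{thm-main theorem} the paper never upgrades a logarithmic rate --- it builds an auxiliary Calabi--Yau metric on $(\ca,J_\ca)$ that is polynomially close to the given metric by construction and only then invokes Chiu--Sz\'ekelyhidi.

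The paper's actual route avoids analyzing the tangent cone of $\omega_{KE}$ directly by running a Hein--Sun continuity method on the germ. One first constructs by hand (Lemma \ref{lemma-initial metric}, using the normal-projection diffeomorphism of Lemma \ref{lemma: complex structure close}, a local Monge--Amp\`ere solution as in Section \ref{imply polynomial convergence}, and an Arezzo--Spotti-type cutoff) a reference metric $\omega_0=\ii\partial\pp\varphi_0$ that is Ricci-flat near $p$, conical at $p$, and agrees with $\varphi_{KE}$ on $\partial U$; one then joins it to $\omega_{KE}$ by the path \eqref{eq--continuity path} and shows the set $I$ of parameters with conical solutions is open and closed. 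Openness is an implicit function theorem in weighted H\"older spaces enlarged by $J_X$-pluriharmonic functions, $\chi r^2/2$, and the transverse automorphism group of $\ca$, with invertibility of $\Delta_g-t\,\mathrm{Id}$ proved in Appendix 1. Closedness uses uniform $L^\infty$ and local higher-order estimates, a uniform diameter bound (Appendix 2), Chen--Wang compactness for K\"ahler--Einstein metrics with mild singularities (applicable because the metrics along the path are already known conical), Li--Wang--Xu to identify the tangent cone of the limit with $\ca$ from the algebraic germ, and then the argument of Section \ref{imply polynomial convergence} applied to the limit metric; finally the uniqueness theorem of \cite{GGZ} identifies the endpoint with $\varphi_{KE}$. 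Your proposal, as written, replaces this entire structure with appeals to results that are not available for $\omega_{KE}$, so the argument does not go through.
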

 
%According to \cite{CS}, the Fano cone $\mathcal{C}$ admits a Calabi-Yau cone structure $(g_{\mathcal{C}}, J_{\mathcal{C}}, \omega_{\mathcal{C}})$,  and polynomial closeness is understood in the sense of \eqref{eq_polyclose}.

 We remark that as in Theorem \ref{thm-main theorem}, the isomorphism between $\ca$ and $W$ is also a necessary condition for the existence of a conical singular K\"ahler-Einstein metric. If the germ $(X,p)$ itself is biholomorphic to a Calabi-Yau cone, then the assumption in the theorem is satisfied, and we can show that $\Phi$ can be chosen to be a biholomophism. This makes this result a generalization of \cite{HS}. The proof follows the same idea as in \cite{HS}, with some new key insights from  \cite{CW} and \cite{LWX}. Chen-Wang's compactness and regularity results in \cite{CW}, generalizing Cheeger-Colding's results, ensure that we can take limits for K\"ahler-Einstein metrics with mild singularities. Li-Wang-Xu's result in \cite{LWX} ensures that we can determine the tangent cone for the limiting metric from the underlying algebraic structure. See Section \ref{Section--local} for more details. Unfortunately, we are unable to prove similar results for singular K\"ahler-Einstein metrics with positive Ricci curvature. We will briefly discuss the related issues in Section \ref{Section--positive ricci}.

This paper is organized as follows. In Section \ref{preliminary}, we collect some definitions and preliminary results necessary for this paper. In Section \ref{sec--ds theory}, we give a review for Donaldson-Sun's two-step degeneration theory. In Section \ref{polynomial convergence imply}, the direction from the polynomial convergence to $\ca=W$ is proved. In Section \ref{imply polynomial convergence}, the other direction form $\ca=W$ to polynomial convergence is established. In Section \ref{sec-examples}, we construct examples claimed in Corollary \ref{thm-consequence}. In Section \ref{Section--local}, we give the proof of Theorem \ref{Theorem--local}. In the appendices, we give proofs of two results needed in this paper, which are also well-studied in the literature.

   \subsection*{Acknowledgements} 
  The author expresses his gratitude to his advisor Song Sun for the constant support and valuable suggestions. He thanks Xin Fu for discussing the results in \cite{Fu}. The author is partially supported by NSF Grant DMS-2304692. Part of this paper was completed during his visit to IASM at Zhejiang University, which he would like to thank for the hospitality.
%%%%%%%%%%%%%%%%%%%%%%%%%%%%%%%%%%
%%%%%%%%%%%%%%%%%%%%%%%%%%%%%%%%%%

%%%%%%%%%%%%%%%%%%%%%%%%%%%
%%%%%%%%%%%%%%%%%%%%%%%%%%%%%

  \section{Preliminary}\label{preliminary}

  \subsection{Weighted analysis near the vertex for a cone metric}
  \begin{definition}
  	$(\ca,o,g_{\ca})$ is said to be a (Riemannian) cone with a smooth link if  $\mathcal C\setminus \{o\}=\mathbb R_{>0}\times L$ for some compact manifold $L$ and $g_{\ca}=dr^2+r^2g_L$ with $r$ the projection onto the first factor and $g_L$ is a Riemannian metric on $L$. The point $o$ is referred to as the vertex of the cone.
  \end{definition}
 A cone $\ca$ admits a complete metric $d_{\ca}$, which is the metric completion of the one induced by the Riemannian metric $g_{\ca}$ on $\mathcal C\setminus \{o\}$.  In the following, the symbol $r$ is used to represent the distance function to the vertex of the cone. For every $s>0$, let $B_s$ denote the ball centered at $o$ of radius $s$. For any $k\in \mathbb Z_{\geq 0}$, $\beta\in \mathbb R$ and $\alpha\in (0,1)$, we define the Banach space $C^{k,\alpha}_{\beta}(B_s)$ using the following weighted norms on a function $f$ defined on $B_s\subset\mathcal C$:
\begin{equation}\label{eq--definition of banach space}
	 \|f\|_{C_{\beta}^{k,\alpha}(B_s)}=\sum_{j=0}^k \sup _{B_s}\left|r^{-\boldsymbol{\beta}+j} \nabla^j_{g_\ca} f\right|+\sup_{x\in B_s}\left(r(x)^{-\beta+k+\alpha}[\nabla^k_{g_\ca}f]_{C^{0,\alpha}(B_s\cap B(x,\frac{1}{2}r(x))}\right),
\end{equation}where $[\quad]_{C^{0,\alpha}}$ denotes the ordinary $C^{0,\alpha}$ seminorm.

In the following, we will denote by $\Psi(\epsilon;\cdots)$ any nonnegative functions depending on $\epsilon$ and some additional parameters such that when these parameters are fixed, $\lim_{\epsilon\rightarrow 0}\Psi(\epsilon;\cdots)=0$.
  Now fix $s_0\in (0,1)$. Suppose there is another Riemannian metric $g$ on $B_{s_0}$ such that for all $l\geqslant 0$ and $s\leqslant s_0$,
\begin{equation}\label{eq--new riemannian metric}
    s^l\sup_{B_s}|\nabla^l_{g_{\mathcal C}} (g-g_{\mathcal C})|=\Psi(s;l).\end{equation}
 Fix $k\geqslant 2n+1$ and $\alpha\in (0, 1)$.  Then \eqref{eq--new riemannian metric} ensures that for $s$ sufficiently small
 \begin{equation}
 	\Delta_g: C^{k+2.\alpha}_{2+\beta}(B_s)\rightarrow C^{k,\alpha}_\beta(B_s)
 \end{equation} is a bounded linear operator. Then the following result ensures that $\Delta_g$ admits a right inverse, whose norm is uniformly bounded independent of $s$.
  
   Let $0=\lambda_0<\lambda_1 \leqslant \lambda_2 \leqslant \ldots$ denotes the eigenvalues of the Laplacian on the link $L$ of $\ca$ (listed with multiplicity), and
\begin{equation}\label{eq--exceptional roots}
	\mu_i^{ \pm}=-\frac{m-2}{2} \pm \sqrt{\frac{(m-2)^2}{4}+\lambda_i}.
\end{equation}
Let $\Gamma=\{\mu_i^{\pm}:i=1,\cdots\}$. Then we have

  \begin{proposition} \label{eq--right inverse}
 For any given $\beta\in (0,1)\setminus \Gamma$, there exist $s_1\leqslant s_0$ and $C>0$ such that for all $s\leqslant s_1$, there is a bounded linear map $\mathcal T_s: C^{k,\alpha}_{\beta}(B_s)\to C^{k+2, \alpha}_{\beta+2}(B_s)$ such that $\Delta_{g}\circ \mathcal T_s=\operatorname{Id}$ and $\|\mathcal T_s\|\leqslant C$. 
 \end{proposition}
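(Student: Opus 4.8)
The plan is to build the right inverse $\mathcal{T}_s$ by a perturbation argument off the model cone $\mathcal{C}$, where the analogous statement for $\Delta_{g_{\mathcal{C}}}$ is classical. First I would recall the mapping theory on the exact cone: separating variables via the eigenfunctions of $\Delta_L$, one solves $\Delta_{g_{\mathcal{C}}} u = f$ on $B_s$ mode by mode, using the indicial roots $\mu_i^\pm$ in \eqref{eq--exceptional roots}. Because $\beta \in (0,1)\setminus\Gamma$ avoids all indicial roots, for each eigenmode one can select the particular solution whose radial behavior matches the weight $\beta+2$, and Schauder estimates on dyadic annuli (rescaled to unit size) combine into the weighted $C^{k+2,\alpha}_{\beta+2}$ bound with a constant independent of $s$ — this scale-invariance is the reason the weighted spaces are the right setting. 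One subtlety is ensuring the series over eigenmodes converges in the $C^{k+2,\alpha}$ topology; this is where the hypothesis $k \geq 2n+1$ (comfortably above the Sobolev threshold on the link $L^{2n-1}$) is used, so that bounds on finitely many derivatives plus elliptic regularity control the full norm. Call the resulting operator $\mathcal{T}_s^0 : C^{k,\alpha}_\beta(B_s) \to C^{k+2,\alpha}_{\beta+2}(B_s)$ with $\Delta_{g_{\mathcal{C}}}\circ \mathcal{T}_s^0 = \mathrm{Id}$ and $\|\mathcal{T}_s^0\| \leq C_0$ uniformly in $s \leq s_1$.

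Next I would treat $\Delta_g$ as a perturbation of $\Delta_{g_{\mathcal{C}}}$. Writing $\Delta_g = \Delta_{g_{\mathcal{C}}} + (\Delta_g - \Delta_{g_{\mathcal{C}}})$, the difference is a second-order operator whose coefficients, by \eqref{eq--new riemannian metric}, are $\Psi(s;\cdot)$-small in the appropriate weighted sense: schematically $g - g_{\mathcal{C}}$ and its derivatives decay relative to the cone scale, so $\Delta_g - \Delta_{g_{\mathcal{C}}} : C^{k+2,\alpha}_{\beta+2}(B_s) \to C^{k,\alpha}_{\beta}(B_s)$ has operator norm $\Psi(s)$. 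Here one must be slightly careful that the perturbation is genuinely weight-compatible: the second-derivative coefficients multiply $\nabla^2$ and contribute a term of the same weight $\beta$, controlled by $\sup |g - g_{\mathcal{C}}|$ and $\sup |r\,\nabla(g-g_{\mathcal{C}})|$ etc., each of which is $\Psi(s;l)$ by hypothesis; the first-derivative (Christoffel) terms come with an extra factor of $r^{-1}$ but also an extra derivative on $g - g_{\mathcal{C}}$, so the weights again match. Then for $s$ small enough that $\|\mathcal{T}_s^0\|\cdot\|\Delta_g - \Delta_{g_{\mathcal{C}}}\| \leq C_0\,\Psi(s) < \tfrac{1}{2}$, the operator $\Delta_g \circ \mathcal{T}_s^0 = \mathrm{Id} + (\Delta_g - \Delta_{g_{\mathcal{C}}})\circ \mathcal{T}_s^0$ is invertible on $C^{k,\alpha}_\beta(B_s)$ by a Neumann series, and I set $\mathcal{T}_s := \mathcal{T}_s^0 \circ \big(\mathrm{Id} + (\Delta_g - \Delta_{g_{\mathcal{C}}})\circ \mathcal{T}_s^0\big)^{-1}$, which satisfies $\Delta_g \circ \mathcal{T}_s = \mathrm{Id}$ and $\|\mathcal{T}_s\| \leq 2C_0 =: C$, uniformly in $s \leq s_1$.

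The main obstacle, and the step deserving the most care, is the uniform-in-$s$ construction of $\mathcal{T}_s^0$ on the model cone — specifically verifying that the separation-of-variables solution lands in the stated weighted Hölder space with a scale-independent bound, rather than merely an $L^2$-type bound. The cleanest route is probably to first establish a weighted $L^2$ or weighted Sobolev estimate via the indicial-root analysis (the condition $\beta \in (0,1)$ places the weight in the range where one projects onto the "decaying at $o$" part of each mode, avoiding the critical exponents in $\Gamma$), and then upgrade to weighted Hölder via interior elliptic estimates applied on the scale-$r$ annuli and rescaled; the scale invariance of the cone metric $g_{\mathcal{C}} = dr^2 + r^2 g_L$ under $r \mapsto \lambda r$ makes all these estimates automatically uniform in $s$. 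Since this model-cone mapping theory is standard (it appears, e.g., in the analysis of asymptotically conical and conically singular manifolds), I would state it as a lemma with a brief indication of proof or a citation, and devote the bulk of the argument to the perturbation bookkeeping in \eqref{eq--new riemannian metric}. I would also note that $\beta \in (0,1)$ conveniently guarantees no indicial root lies in $[\,\beta,\beta+2\,]$ other than possibly $0$ and the first few positive ones, but since $\beta\notin\Gamma$ the relevant Fredholm/surjectivity statement holds without a cokernel obstruction, which is why one gets an honest right inverse rather than merely a parametrix.
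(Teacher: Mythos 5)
Your proposal is correct and follows essentially the same route as the paper: first a uniformly bounded (in $s$) right inverse for $\Delta_{g_{\mathcal C}}$ via separation of variables on the cone (the paper gets the $s$-uniformity through a scaled Seeley extension operator to $B_2$ plus the model-cone solvability of \cite[Proposition 2.9]{HS}, which is exactly the standard lemma you defer to), and then the perturbation $\Delta_g=\Delta_{g_{\mathcal C}}+(g-g_{\mathcal C})*\nabla^2_{g_{\mathcal C}}+\nabla_{g_{\mathcal C}}g*\nabla_{g_{\mathcal C}}$ with operator norm $\Psi(s)$ by \eqref{eq--new riemannian metric}, concluded by a Neumann series. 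The only differences are cosmetic bookkeeping in how the $s$-independence of the model inverse is organized.
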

 
 %%%%depending only on $\mathcal C$, $k, \alpha,\beta$ and the functions $e_1,\cdots, e_{k+2}$,
 
\begin{proof}
 This is a standard result and for readers' convenience, we give a sketch of the proof. One first show that $\Delta_{g_{\ca}}$ admits right inverse with norm bounded independent of $s$. For any $s\in (0,1]$ there is a linear extension map 
		$E_s:C^{k, \alpha}_{\beta}(B_s)\rightarrow C^{k, \alpha}_{\beta}(B_2)$
whose norm is bounded independent of $s$. The existence of $E_1$ follows from the local construction in \cite{Seeley}, and then one can define $E_s$ for $s\in (0,1)$ using scaling. Then using separation of variables \cite[Proposition 2.9]{HS}, for $f\in C^{k, \alpha}_{\beta}(B_2)$ one can solve $\Delta_{g_{\ca}}u=f$ with $u$ depending linearly on $f$ and 
\begin{equation*}
	\|u\|_{C^{k,\alpha}_{2+\beta}(B_1)}\leq C \|f\|_{C^{k, \alpha}_{\beta}(B_2)}.
\end{equation*}Then the existence of uniformly bounded right inverse of $\Delta_{g_\ca}$ follows from this.

For a general Riemannian metric satisfying \eqref{eq--new riemannian metric}, one can write $\Delta_{g}=\Delta_{g_\mathcal C}+(g-g_{\mathcal C})*\nabla^2_{g_\mathcal C}+\nabla_{g_{\mathcal C}}g*\nabla_{g_\mathcal C}$. Then acting on function defined on $B_r$, we have
$$\|\Delta_g-\Delta_{g_\mathcal C}\|\leqslant C  \sup_{s\leqslant r, \ 0\leqslant l \leqslant k+2}\Psi(s;l).$$
The right hand side is small when $r \ll 1$. Then the conclusion follows from standard functional analysis. 
\end{proof}

The following result ensures that we can get a point-wise bound of a function from integral bound and the proof follows from the standard elliptic regularity and a rescaling argument. 

\begin{proposition}\label{prop--elliptic estimate}
	Let $g$ be a Riemannian metric on $B_{s_0}$ satisfying \eqref{eq--new riemannian metric}.
	Suppose $u$ is a function defined on $B_{s_0}$ and  for some $\tau, \tau'\in \mathbb R$ and for all $l\geq 0$, 
\begin{equation}
	\left|\nabla^l_g(\Delta_gu)\right|_g\leq C_l r^{\tau-l-2},\text{ and }\int_{B_{s_0}}u^2r^{-2\tau'-2n}d\operatorname{Vol}_g\leq 1.
\end{equation}
 Then for $\gamma=\min(\tau, \tau')$ and any $k>0$, there exist constants $A_{k}>0$ such that
	\begin{equation*}
	    	|\nabla_g^{k}u|_g\leq A_{k} r^{\gamma-k} \text{ on $B_{s_0}$}.
	\end{equation*}
\end{proposition}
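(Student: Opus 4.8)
The plan is to prove the estimate by a rescaling argument, reducing the weighted bound on $B_{s_0}$ to a fixed-scale elliptic estimate on unit-size annuli, and then extracting the sharp exponent $\gamma = \min(\tau,\tau')$ from the interplay between the pointwise hypothesis on $\Delta_g u$ (which by itself gives exponent $\tau$) and the integral hypothesis on $u$ (which controls $u$ in $L^2$ with exponent $\tau'$). First I would fix a point $x \in B_{s_0}\setminus\{o\}$ with $\rho := r(x)$ and consider the annulus $A_\rho = \{\rho/2 < r < 2\rho\}$. Rescale the metric by $\rho^{-2}$, i.e. set $\tilde g = \rho^{-2} g$; because $g$ satisfies \eqref{eq--new riemannian metric}, on this annulus $\tilde g$ is uniformly close in every $C^l$ norm to the model cone metric $\rho^{-2}g_{\mathcal C}$, which on $A_\rho$ looks (after the dilation $r\mapsto r/\rho$) like a fixed annulus $\{1/2 < r < 2\}$ in the cone with a metric having uniformly bounded geometry independent of $\rho$. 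Set $\tilde u(y) = \rho^{-\gamma} u(\rho y)$ in these rescaled coordinates.

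Next I would translate the two hypotheses into the rescaled picture. The pointwise bound $|\nabla^l_g(\Delta_g u)|_g \le C_l r^{\tau - l - 2}$ becomes, under the scaling (noting $\Delta_{\tilde g} = \rho^2 \Delta_g$ and each covariant derivative in $\tilde g$ costs a factor $\rho$), a bound $|\nabla^l_{\tilde g}(\Delta_{\tilde g}\tilde u)|_{\tilde g} \le C_l \rho^{\tau - \gamma}$ on the fixed annulus, and since $\tau \ge \gamma$ the right-hand side is bounded by $C_l$. Similarly the integral hypothesis $\int_{B_{s_0}} u^2 r^{-2\tau' - 2n} d\operatorname{Vol}_g \le 1$, restricted to $A_\rho$ and rescaled (the volume form picks up $\rho^{2n}$, $u^2$ picks up $\rho^{2\gamma}$, and $r^{-2\tau'}$ picks up $\rho^{-2\tau'}$), yields $\int_{\{1/2 < r < 2\}} \tilde u^2 \, d\operatorname{Vol}_{\tilde g} \le C\rho^{2\gamma - 2\tau'} \le C$ since $\tau' \ge \gamma$. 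So on the fixed annulus $\tilde u$ is bounded in $L^2$ and $\Delta_{\tilde g}\tilde u$ is bounded in every $C^l$, with all constants independent of $\rho$; standard interior elliptic estimates (Calderón–Zygmund plus Schauder, applied on the slightly smaller annulus $\{3/4 < r < 3/2\}$, using that $\tilde g$ has uniformly controlled coefficients there) give $\|\tilde u\|_{C^{k}(\{3/4<r<3/2\})} \le A_k$ for every $k$, uniformly in $\rho$. Unwinding the scaling at the point $x$ (which sits at $r=1$ in the rescaled annulus) gives $|\nabla^k_g u|_g(x) \le A_k \rho^{\gamma - k} = A_k r(x)^{\gamma - k}$, which is the claim.

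One technical point I would be careful about: the hypothesis \eqref{eq--new riemannian metric} only controls $g - g_{\mathcal C}$ on $B_{s_0}$ with a $\Psi(s;l)$ that shrinks as $s\to 0$, so near the vertex the rescaled metrics converge to the exact model cone annulus — this is more than enough for uniform ellipticity, but I should state that for $\rho$ bounded away from $0$ (say $\rho \in [s_1, s_0]$ for any fixed $s_1 > 0$) one still has uniform bounds simply by compactness of that range of scales together with smoothness of $g$ on the compact region $\{s_1/2 \le r \le s_0\}$, so the estimate holds on all of $B_{s_0}$ after possibly enlarging the constants $A_k$; alternatively one absorbs this into the statement by allowing $A_k$ to depend on $s_0$. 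I expect the main (though still routine) obstacle to be bookkeeping the scaling weights correctly so that the exponent that survives is exactly $\gamma = \min(\tau,\tau')$ rather than $\tau$ or $\tau'$ separately: the point is that elliptic regularity converts the $L^2$ bound on $u$ and the pointwise bound on $\Delta_g u$ into a pointwise bound on $u$ governed by whichever of the two input exponents is smaller, and one must choose the rescaling normalization $\rho^{-\gamma}$ with $\gamma$ equal to that minimum for both rescaled quantities to stay bounded simultaneously.
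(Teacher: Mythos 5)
Your proposal is correct and is exactly the argument the paper has in mind: the paper states only that Proposition \ref{prop--elliptic estimate} ``follows from the standard elliptic regularity and a rescaling argument'' and gives no further details, and your rescaling to unit annuli, translation of the two hypotheses into scale-invariant bounds, application of interior elliptic estimates, and unwinding to recover the weighted bound is precisely that argument, with the scaling exponents bookkept correctly (in particular the observation that the normalization must be $\rho^{-\gamma}$ with $\gamma=\min(\tau,\tau')$ so that both the $L^2$ bound and the source-term bound survive the rescaling).
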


  \subsection{Harmonic functions on Calabi-Yau cones.}
  A cone $\ca$ has a natural scaling vector field $r\partial r$. A tensor $T$ on $\ca$ is called $\mu$-homogeneous for some $\mu\in \mathbb R$ if $\mathcal L_{r\partial r} T = \mu T$. We remark that this condition implies that $|\nabla^jT|\sim r^{\mu+p-q-j}$ with respect to the cone metric for all $j \in \mathbb N_0$, if $T$ is $p$-fold covariant and $q$-fold contravariant. In general, if a tensor $T$ satisfies $|T|\sim r^{\lambda}$ as $r\rightarrow 0$, then we will say $T$ has growth rate of $\lambda$.  A K\"ahler cone $\ca$ is a cone with a parallel complex structure $J_\ca$. For a K\"ahler cone $\ca$, the Reeb vector-field $\xi_\ca=J_\ca(r\partial r)$ is a holomorphic Killing vector field. 
  
  \begin{definition}\label{def--calabi-yau cone metric}
	A Calabi-Yau cone with smooth link is a tupe $(\ca, o, g_{\ca}, J_{\ca}, \omega_{\ca})$, such that  
       \begin{itemize}
           \item  $(\ca, o, g_{\ca})$ is a Riemannian cone,
  
  \item $(g_\mathcal C, J_{\mathcal C}, \omega_{\mathcal C})$ defines a K\"ahler Ricci-flat metric on $\mathcal{C}\setminus \{o\}$. 
       \end{itemize}\end{definition}

 The following lemma,
 which is proved in \cite[Theorem 2.14]{HS}, building  on the early work \cite{CT1994}, plays an important role in this paper.  
 \begin{theorem}[\cite{HS}]\label{thm--HS-harmonic functions}
 Let $\ca$ be a Calabi-Yau cone with a smooth link. Then
 \begin{itemize}
 	\item [(1).]If $u$ is a real-valued $\mu$-homogeneous harmonic function with $\mu>0$. Then $\mu\geq1$. If $1<\mu<2$, then $u$ is pluriharmonic. If $\mu=2$, then $u=u_1+u_2$, where $u_1$ and $u_2$ are 2-homogeneous, $u_1$ is pluriharmonic and $u_2$ is $\xi$-invariant.
 	\item [(2).] The space of all holomorphic vector fields that commute with $r\partial r$ can be written as $\mathfrak p\oplus J_\ca\mathfrak p$ where $\mathfrak p$ is spanned by $r\partial r$ and by the gradient fields of the $\xi$-invariant 2-homogeneous harmonic functions. All elements of $J_\ca \mathfrak p$ are Killing fields.
 \end{itemize}
 \end{theorem}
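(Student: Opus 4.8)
The plan is to reduce everything to spectral analysis on the link $L$ via separation of variables, using the Lichnerowicz--Obata inequality for the purely Riemannian information and the transverse K\"ahler--Einstein (Sasaki--Einstein) structure of $L$ for the K\"ahler refinements; this is the route of \cite{CT1994,HS}.

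\emph{Part (1).} Put $m=\dim_{\mathbb R}\ca=2n$. A $\mu$-homogeneous function is $u=r^{\mu}\phi$ with $\phi\in C^{\infty}(L)$, and $\Delta_{g_\ca}(r^{\mu}\phi)=r^{\mu-2}\bigl(\Delta_L\phi-\mu(\mu+m-2)\phi\bigr)$, so $u$ is harmonic exactly when $\Delta_L\phi=\mu(\mu+m-2)\phi$. As $g_\ca$ is Ricci-flat, $\Ric_{g_L}=(m-2)g_L$, so Lichnerowicz--Obata gives $\lambda_1(L)\ge m-1$; since $t\mapsto t(t+m-2)$ increases on $(0,\infty)$ with value $m-1$ at $t=1$, any $\mu>0$ that occurs satisfies $\mu\ge1$. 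For the finer statements I would use that whenever $u$ is harmonic the $(1,1)$-form $\ii\partial\pp u$ is closed, is trace-free because $\Delta u=0$, and hence --- being a closed primitive $(1,1)$-form on a K\"ahler manifold --- is also coclosed; so it is a primitive harmonic $(1,1)$-form of growth rate $\mu-2$ (while the $(2,0)+(0,2)$-part of $\Hess u$, the ``holomorphic quadratic differential'' $\nabla_i\nabla_j u$, is under no positivity constraint and can be nonzero for $1<\mu<2$, as it must since pluriharmonic functions occur there). The key point --- and the only place the complex structure is genuinely used --- is that on a Calabi--Yau cone with smooth link a nonzero primitive harmonic $(1,1)$-form has nonnegative growth rate, with rate exactly $0$ only when it equals $\ii\partial\pp h$ for a $2$-homogeneous $\xi$-invariant harmonic function $h$; this I would obtain by separating variables on the Sasaki--Einstein link $L$ and applying a Lichnerowicz/Matsushima-type lower bound for the transverse complex Laplacian. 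Feeding in the rate $\mu-2$: for $1<\mu<2$ we get $\ii\partial\pp u=0$, i.e.\ $u$ is pluriharmonic; for $\mu=2$ we get $\ii\partial\pp u=\ii\partial\pp u_2$ for such an $h=u_2$, and $u_1:=u-u_2$ is then $2$-homogeneous and pluriharmonic.

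\emph{Part (2).} As $\mathcal L_{r\partial r}$ commutes with $J_\ca$, the holomorphic vector fields commuting with $r\partial r$ are the complexification of the real Lie algebra $\mathfrak h_0$ of real-holomorphic $Y$ with $[Y,r\partial r]=0$; such $Y$ also satisfy $[Y,\xi]=J_\ca[Y,r\partial r]=0$. The device I would use is the real-linear map $\Phi\colon\mathfrak h_0\to C^{\infty}(\ca\setminus\{o\})$, $\Phi(Y):=g_\ca(Y,r\partial r)=Y(r^{2}/2)$: since $[Y,r\partial r]=[Y,\xi]=0$, $\Phi(Y)$ is $2$-homogeneous and $\xi$-invariant, and $\Phi(\operatorname{grad}f)=2f$ for $f$ $2$-homogeneous. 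One checks $\ker\Phi$ is exactly the set of Killing fields in $\mathfrak h_0$: a $Y\in\ker\Phi$ preserves $r$, and being real-holomorphic it then preserves the contact and transverse structures of $L$, hence $g_L$, hence $g_\ca$; conversely a Killing field in $\mathfrak h_0$ preserves $r\partial r$, hence its norm $r$, hence lies in $\ker\Phi$. Moreover $Y-\operatorname{grad}(\Phi(Y)/2)\in\ker\Phi$, so $Y=\operatorname{grad}(\Phi(Y)/2)+K$ with $K$ Killing; here $\operatorname{grad}(\Phi(Y)/2)$ is holomorphic (a difference of holomorphic fields), so $\Phi(Y)/2$ is a $2$-homogeneous $\xi$-invariant function with $\Hess(\Phi(Y)/2)$ of type $(1,1)$, whence $\Delta(\Phi(Y)/2)$ is the holomorphic divergence of a holomorphic field, i.e.\ a $0$-homogeneous harmonic function, hence a constant $c$, and $\Phi(Y)/2-\tfrac{c}{2m}r^{2}$ is $2$-homogeneous, $\xi$-invariant and harmonic. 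Since $\operatorname{grad}(r^{2}/2)=r\partial r$, this places $\operatorname{grad}(\Phi(Y)/2)$ in $\mathfrak p:=\mathbb R(r\partial r)\oplus\{\operatorname{grad}(h):h\ 2\text{-homog.,}\ \xi\text{-inv., harmonic}\}$. Conversely, using $H^{1}(L)=0$ (positive Ricci), every Killing field in $\mathfrak h_0$ is the Hamiltonian field of such a potential, hence lies in $J_\ca\mathfrak p$; thus $\ker\Phi=J_\ca\mathfrak p$ and $\mathfrak h_0=\mathfrak p\oplus J_\ca\mathfrak p$, with $J_\ca\mathfrak p$ consisting of Killing fields. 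The holomorphy of $\operatorname{grad}(h)$ for each such $h$, equivalently $\Hess h$ of type $(1,1)$, is precisely the $\mu=2$ conclusion of Part (1).

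\emph{Main obstacle.} The single genuinely K\"ahler step, and the crux of the whole argument, is the claim in Part (1) that a nonzero primitive harmonic $(1,1)$-form on a Calabi--Yau cone with smooth link has nonnegative growth rate, the rate-$0$ ones being the $\ii\partial\pp$ of $2$-homogeneous $\xi$-invariant harmonic functions. This is where ``Ricci-flat Riemannian'' must be upgraded to ``Ricci-flat K\"ahler'', and it rests on a Lichnerowicz/Matsushima-type eigenvalue estimate for the transverse complex Laplacian on the Sasaki--Einstein link $L$; everything else --- the separation of variables, the Obata rigidity for $\mu\ge 1$, and the bookkeeping of Part (2) --- is routine by comparison.
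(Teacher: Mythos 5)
The statement you are proving is quoted in the paper from \cite{HS} (Theorem 2.14 there, building on \cite{CT1994}); the paper gives no proof, and your outline does follow the same general route (separation of variables, Lichnerowicz--Obata for $\mu\ge 1$, a transverse Lichnerowicz/Matsushima-type estimate for the K\"ahler refinements). However, your proposal is a reduction rather than a proof: the step you yourself label the ``main obstacle'' --- the assertion that a nonzero primitive harmonic $(1,1)$-form on a Calabi--Yau cone with smooth link has nonnegative growth rate, with rate $0$ occurring only for $\ii\partial\pp h$ with $h$ a $2$-homogeneous $\xi$-invariant harmonic function --- is exactly the content of the cited theorem, and you do not prove it, you only point to an unspecified spectral estimate on the link. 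Worse, as literally stated the assertion is false: if the link is regular Sasaki--Einstein over a K\"ahler--Einstein Fano $D$ with $h^{1,1}(D)\ge 2$ (e.g.\ the conifold, the cone over the link fibering over $\mathbb P^1\times\mathbb P^1$), the pullback to the cone of a primitive harmonic $(1,1)$-form on $D$ is closed, primitive for $\omega_\ca$, hence coclosed and harmonic, and it is homogeneous of growth rate $-2$ with respect to $g_\ca$. So the claim must be restricted to the relevant range (rates in $(-1,0]$, equivalently to forms $\ii\partial\pp u$ with $u$ homogeneous harmonic of degree $\mu\in(1,2]$), and establishing that restricted statement --- via the decomposition of link eigenfunctions under the Reeb flow and the transverse (basic) Lichnerowicz estimate --- is precisely the analytic work of \cite{HS,CT1994} that is missing from your write-up.

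There is also a circularity in your Part (2). From $K:=Y-\operatorname{grad}\bigl(\Phi(Y)/2\bigr)\in\ker\Phi$ you conclude that $K$ is Killing, but your identification $\ker\Phi=\{\text{Killing fields}\}$ was proved only for elements of $\mathfrak h_0$, i.e.\ for real-holomorphic fields, and you do not yet know that $\operatorname{grad}\bigl(\Phi(Y)/2\bigr)$ is real-holomorphic; you then derive its holomorphy from $K$ being Killing (``a difference of holomorphic fields''), which is circular. The correct order is: show directly that $\Phi(Y)$ is $2$-homogeneous, $\xi$-invariant and that $\Delta_{g_\ca}\Phi(Y)$ is constant (this needs a computation with $\mathcal L_Y\omega_\ca=d\iota_Y\omega_\ca$, not the holomorphy of the gradient), subtract the appropriate multiple of $r^2$ to get a $2$-homogeneous $\xi$-invariant harmonic function, and only then invoke the $\mu=2$ case of Part (1) to conclude that its Hessian is $J_\ca$-invariant, i.e.\ its gradient is real-holomorphic; the splitting $\mathfrak h_0=\mathfrak p\oplus J_\ca\mathfrak p$ follows after that. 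So Part (2) genuinely depends on the unproved crux of Part (1), and as written neither part closes.
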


\subsection{Polarized affine cones and Fano cones}\label{sec-terminologies}

In this section, we fix some basic definitions following \cite{CS,LX}. Let $X$ be a normal affine variety with coordinate ring $R(X)$, $\xi$ be a non-zero (real) holomorphic vector field on $X$ which generates a compact torus of automorphisms $\mathbb T$ on $X$ with a unique a fixed point. Then we have a weight decomposition
 \begin{equation}\label{eq-weight decomposition}
 	R(X)=\bigoplus_{\chi \in \Gamma} R_\chi(X).
 \end{equation} Here $\Gamma\subset \operatorname{Lie}(\mathbb T)^*$ is the weight lattice of the $\mathbb T$-action, that is $R_\chi(X)\neq \{0\}$  and $f\in R_\chi(X)$ if and only if $\mathcal L_{V_{\eta}}f=\ii\left<\chi,\eta\right>f$ for every $\eta\in \operatorname{Lie}(\mathbb T)$. Here we use $V_{\eta}$ to denote the holomorphic vector field induced by $\eta$ and $\left<\cdot,\cdot\right>$ to denote the natural pairing between $\operatorname{Lie}(\mathbb T)$ and its dual.
 
 \begin{definition}
 	A polarized affine cone is a pair $(X,\xi)$ such that $\left<\chi,\xi\right>>0$ for all nonzero $\chi\in \Gamma$. Such a $\xi$ is called a Reeb field or polarization. 
 \end{definition}

\begin{definition}
	A polarized affine cone $(X,\xi)$ is said to be a Fano cone, if it has log terminal singularities.
\end{definition}

%In the following, we restrict ourselves to affine varieties with only an isolated singularity. These definitions can be generalized easily to more general affine varieties \cite{}. 

\begin{remark}
	We use the terminology \emph{polarized affine cone} instead of polarized affine variety as in \cite{CS} to emphasize the grading structure on the coordinate ring $R(X)$. It is shown in \cite{CS} that if $X$ has only an isolated singularity, then a Fano cone is equivalent to a polarized affine cone $(X,\xi)$ with a trivializing section $\Omega$ of $mK_X$ for some $m>0$ such that 
	\begin{equation}\label{eq-lie derivative}
		\mathcal L_{\xi}\Omega=\ii \lambda \Omega
	\end{equation}for some $\lambda>0$.
\end{remark}

A Calabi-Yau cone with smooth link naturally admits a Fano cone structure, as we will briefly review here. For a detailed proof, please refer to \cite{van2011,DS2}. The Reeb vector field $\xi=J_{\ca}(r\partial_r)$ is a holomorphic Killing vector field on $\ca$, which generates a compact torus $\mathbb T$ action on $\ca$.
 We have the following
\begin{itemize}
	\item [(1).]  Polynomial growth holomorphic functions form a finitely generated ring $R(\ca)$ which defines a normal affine variety structure on $\ca$. Moreover there is a positive grading induced by the action of $\mathbb T$, on the coordinate ring 
	\begin{equation*}
R(\ca)=\bigoplus_{\mu\in \mathcal S}R_{\mu}(\ca)
	\end{equation*}where $\mathcal{S}= \mathbb{R}_{\geq 0} \cap\left\{\langle\alpha, \xi\rangle \mid \alpha \in \Gamma\right\}$ and  $\Gamma \subset \operatorname{Lie}(\mathbb{T})^*$  is the weight lattice of the $\mathbb{T}$-action. $\mathcal S$ is a discrete set and is called the \emph{holomorphic spectrum} of $\ca$. In the decomposition, $f\in R_{\mu}(\ca)$ if and only if $f$ is $\mu$-homogeneous, i.e., $\mathcal L_{\xi}f=\ii \mu f$. For $f\in R(\ca)$, its degree $\deg(f)$ is defined to be the smallest number $d$ such that such that $f\in \bigoplus_{\mu\leq d}R_{\mu}(\ca)$ and it is direct to show that this is the same as the growth order of $f$ at infinity.
	\item[(2).] Since $\ca$ has a smooth link, then there exists a $\mathbb T$ invariant, nowhere vanishing holomorphic section $\Omega$ of $mK_{\ca}$ for some positive integer $m$ such that 
	\begin{equation}\label{eq-equation on the cone}
		\omega_{\ca}^n=(c_{n}\Omega\wedge\ols{\Omega})^{\frac{1}{m}}.
	\end{equation}For this, note that the metric on the link $L$ of $\ca$, has positive Ricci curvature and hence $\ca\setminus\{o\}$ has finite fundamental group and therefore the flat bundle $mK_\ca$ admits a parallel section for some $m\geq 1$ as the line bundle $K_\ca$ admits a flat connection induced by the Calabi-Yau metric. Then we can adjust the constant to guarantee \eqref{eq-equation on the cone} holds. Since $\omega_\ca^n$ has finite volume in a neighborhood of $o$, \eqref{eq-equation on the cone} implies that $\ca$ has log terminal singularity at $o$ \cite[Lemma 6.4]{EGZ}.

 \end{itemize}

\begin{remark}
	  Let $(Z,p,d_Z)$ be a pointed Gromov-Hausdorff limit of a sequence of complete non-collapsing K\"ahler metrics  with Ricci curvature uniformly bounded below, and let $(\ca,o)$ be a tangent cone of $Z$ at $p$. In this general setting (without assuming polarized condition and only assuming Ricci curvature lower bound), it is proved in \cite{LS} that $\ca$, which typically has non-isolated singularities, always admits a polarized affine cone structure.
\end{remark}

\subsection{K-stability for Fano cones} General definitions and systematic studies for (log) Fano cone singularities are available, as discussed in works such as \cite{CS, li2021, LX, LWX, Zhuang2023} and references therein. To streamline our discussion and avoid excessive terminology, we adhere to the following definitions, which coincide with those established in the existing literature. 
%We emphasize that the following two definitions work for general Fano cones, not necessarily those with isolated singularities.
\begin{definition}
	A Fano cone $(\ca,\xi)$ is said to be K-polystable if it admits a Calabi-Yau cone structure.
	\end{definition} Here in general, a Calabi-Yau cone structure means that there exists a smooth radius function $r$, i.e., $\mathcal L_{-J\xi}r=r$ over $\ca^{reg}$ that is locally bounded on $\ca$, and on $\ca^{reg}$ the K\"ahler form  $\omega=\ii\partial\pp r^2$ satisfies 
	\begin{equation*}
		\operatorname{Ric}(\omega)=0, \quad \text{and}\quad \mathcal L_{-J\xi}\omega=2\omega.
	\end{equation*}When $\ca$ has only an isolated singularity, this coincides with the one given in Definition \ref{def--calabi-yau cone metric}. 
	
	\begin{definition}
	A Fano cone $(X,\xi_X)$ is said to be K-semistable if it admits an equivariant degeneration to a K-polystable Fano cone $(\ca,\xi)$.
\end{definition}
Let $D$ be a Fano manifold and consider the affine variety $$K^{\times}_D:=\operatorname{Spec}(\bigoplus_{m\geq 0}H^0(D,-mK_{D})),$$which is obtained by blowing down the zero section of $K_D$ geometrically. Then $K_D^{\times}$ admits a standard Reeb vector field $\xi$ induced from the fiberwise rotation on $K_D$. It is direct to show that $(K_D^{\times},\xi)$ is K-polytable if and only if $D$ admits a K\"ahler-Einstein metric.

\subsection{H\"ormander $L^2$-method}
The following version of H\"ormander $L^2$-estimate is used and we refer to \cite[Chapter VIII]{Demailly} for a proof:
\begin{theorem}\label{thm--Hormander}
   Let $M$ be an $n$ dimensional complex manifold, which admits a complete K\"ahler metric. Let $\omega_M$ be any  K\"ahler metric on $M$ and $\varphi_M$ be a smooth function with $\sqrt{-1}\partial\pp\varphi_M+\Ric(\omega_M)\geq \Upsilon\omega_M$ for a continuous non-negative function $\Upsilon$. Let $q$ be a positive integer. Then for any $(0,q)$ form $\eta$ on $M$ with  $\pp \eta=0$ and $\int_{M}\Upsilon^{-1} |\eta|_{\omega_M}^2e^{-\varphi_M}\omega_M^n<\infty$, there exists a $(0, q-1)$ form $\zeta$ on $M$ satisfying $\pp\zeta=\eta$ and with estimate
    \begin{equation*}
        \int_{M} |\zeta|_{\omega_M}^2e^{-\varphi_M}\omega_M^n\leqslant \int_{M}q^{-1}\Upsilon^{-1}|\eta|_{\omega_M}^2e^{-\varphi_M}\omega_M^n.
    \end{equation*}
\end{theorem}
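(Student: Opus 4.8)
\textbf{Proof proposal for Theorem \ref{thm--Hormander}.}

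The plan is to reduce the stated estimate to the standard Bochner--Kodaira--Nakano identity on a complete K\"ahler manifold, applied not to $M$ directly but with a carefully chosen auxiliary weight, and then to run the usual Hilbert space (functional analysis) argument. First I would recall that since $M$ admits \emph{some} complete K\"ahler metric, one may without loss of generality work with a fixed complete K\"ahler metric $\widehat\omega$ for the analytic machinery (completeness is used only to guarantee that $C^\infty_c$ forms are dense in the relevant graph norms and that the formal adjoint of $\pp$ coincides with the Hilbert space adjoint on a suitable domain; see \cite[Chapter VIII]{Demailly}), while computing all pointwise norms, the weight integrability, and the final conclusion with respect to the given metric $\omega_M$. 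The curvature positivity hypothesis $\ii\partial\pp\varphi_M+\Ric(\omega_M)\geq \Upsilon\omega_M$ is naturally read as a positivity statement for the trivial line bundle $M\times\C$ equipped with the metric $e^{-\varphi_M}$ twisted by $K_M$; equivalently, $(0,q)$-forms are identified with $(n,q)$-forms valued in $K_M^{-1}\otimes(M\times\C, e^{-\varphi_M})$, whose Chern curvature is exactly $\ii\partial\pp\varphi_M+\Ric(\omega_M)$.

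The key steps, in order, are as follows. (i) Set up the Hilbert spaces $L^2_{(n,q)}(M,e^{-\varphi_M})$ with respect to $\omega_M$ and the densely-defined closed operator $\pp$; invoke completeness of $\widehat\omega$ to get the density of smooth compactly supported forms and the basic identity $\mathrm{Dom}(\pp)\cap\mathrm{Dom}(\pp^*)\supset$ (smooth compactly supported forms) as a core. (ii) Establish the a priori inequality: for $\alpha$ a smooth compactly supported $(n,q)$-form,
\begin{equation*}
\int_M \langle [\,\ii\Theta,\Lambda\,]\alpha,\alpha\rangle\, e^{-\varphi_M}\,\omega_M^n \;\leq\; \|\pp\alpha\|^2 + \|\pp^*\alpha\|^2,
\end{equation*}
which is the Bochner--Kodaira--Nakano inequality for the curvature term $\ii\Theta=\ii\partial\pp\varphi_M+\Ric(\omega_M)$; here the K\"ahler condition on $\omega_M$ is what makes the torsion terms vanish. (iii) Use the pointwise linear-algebra fact that if $\ii\Theta\geq\Upsilon\omega_M$ with $\Upsilon\geq 0$, then on $(n,q)$-forms $\langle[\ii\Theta,\Lambda]\alpha,\alpha\rangle\geq q\,\Upsilon\,|\alpha|^2_{\omega_M}$ (this is the standard eigenvalue estimate for the commutator acting on $(n,q)$-forms, using $q\geq 1$). (iv) Conclude that $\eta$, assumed $\pp$-closed with $\int_M \Upsilon^{-1}|\eta|^2 e^{-\varphi_M}\omega_M^n<\infty$, pairs against $\mathrm{Dom}(\pp^*)$ via the bound $|\langle\eta,\alpha\rangle|^2\leq \big(\int_M\Upsilon^{-1}|\eta|^2e^{-\varphi_M}\omega_M^n\big)\cdot\big(\int_M\Upsilon|\alpha|^2e^{-\varphi_M}\omega_M^n\big)\leq \big(\int_M\Upsilon^{-1}|\eta|^2\big)\cdot q^{-1}(\|\pp\alpha\|^2+\|\pp^*\alpha\|^2)$ for $\alpha\in\mathrm{Dom}(\pp^*)\cap\ker\pp$, and vanishes against $(\ker\pp)^\perp$ since $\eta\in\ker\pp$; by the Riesz representation / Hahn--Banach argument one produces $\zeta$ with $\pp\zeta=\eta$ and $\|\zeta\|^2\leq q^{-1}\int_M\Upsilon^{-1}|\eta|^2 e^{-\varphi_M}\omega_M^n$. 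Translating back from $(n,q)$-forms valued in $K_M^{-1}$ to ordinary $(0,q)$-forms gives exactly the claimed statement.

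I expect the main obstacle to be purely the bookkeeping around completeness and the weight: one must be careful that the a priori estimate in step (ii) is genuinely valid for all $\alpha\in\mathrm{Dom}(\pp)\cap\mathrm{Dom}(\pp^*)$ and not merely for compactly supported $\alpha$, which is where the existence of the complete K\"ahler metric $\widehat\omega$ enters (via the Gaffney-type density argument and exhaustion function cutoffs, taking limits of the inequality). A secondary subtlety is that $\Upsilon$ is only assumed continuous and non-negative, so $\Upsilon^{-1}$ may blow up; but this is harmless because $\Upsilon^{-1}$ appears only inside a finite integral by hypothesis on $\eta$, and the Cauchy--Schwarz split in step (iv) is set up precisely so that the $\Upsilon$-weighted norm of $\alpha$ is controlled by the curvature term, which dominates it. Since all of this is entirely standard and carried out in detail in \cite[Chapter VIII]{Demailly}, I would present the proof as a short reduction to that reference rather than reproducing the Bochner--Kodaira computation; the only genuine content here is the choice of twisting by $K_M$ and the factor $q^{-1}$, both of which are transparent from the commutator estimate in step (iii).
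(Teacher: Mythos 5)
Your proposal is correct and follows exactly the route the paper intends: the paper gives no independent proof of Theorem \ref{thm--Hormander} but simply cites \cite[Chapter VIII]{Demailly}, and your outline (twisting by $K_M$ so that the curvature is $\ii\partial\pp\varphi_M+\Ric(\omega_M)$, the Bochner--Kodaira--Nakano inequality on a complete K\"ahler manifold, the commutator eigenvalue bound yielding the factor $q$, and the Riesz/Hahn--Banach duality step) is precisely the standard argument carried out there. No gaps.
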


\section{Donaldson-Sun two-step degeneration theory}\label{sec--ds theory}
Let $(Z,p,d_Z)$ be a pointed Gromov-Hausdorff limit of complete non-collapsing K\"ahler-Einstein manifolds with uniformly bounded Ricci curvature. This means that there is a sequence of complete K\"ahler manifolds $(X_i,h_i)$ and points $q_i\in X_i$ satisfying $\Ric(h_i)=\lambda_i h_i$ with $|\lambda_i|\leq 1$ and $\vol(B(q_i,1))\geq \kappa$ for some constant $\kappa$ independent of $i$, such that $(X_i,q_i,h_i)$ converges to $(Z,p,d_Z)$ in the pointed Gromov-Hausdorff topology. Indeed, since we are interested in the local geometry of $Z$ near $p$, completeness of $(X_i,h_i)$ is not important as long as $B(q_i,1)$ is compactly contained in $X_i$.

  Note that we do not assume the K\"ahler-Einstien metrics are polarized, therefore Donaldson-Sun's theory can not directly applied here.
   We assume the following curvature condition holds near $p$:
 \begin{equation}\label{eq-curvature decay}
 	|\Rm(\cdot)|\leq Cd_Z(p,\cdot)^{-2}.
 \end{equation} 
Liu's result \cite[Proposition 2.5]{Liu} is the crucial step in establishing Donaldson-Sun's two-step degeneration theory in our context. The curvature condition \eqref{eq-curvature decay} is needed there to obtain point-wise bound from integral bound for functions. It's worth noting that although Liu's work primarily concerns tangent cones at infinity, the proof can be directly applied here without much modification. 

Let $(Z_j,p_j,J_j, g_j, \omega_j,d_j)$ denote the rescaling of $(Z,p,J_Z,g_Z,\omega_Z,d_Z)$ by a factor $2^{j}$, $j\in \mathbb Z_{\geq 1}$.  This means that we take $J_j=J, p_j=p, g_j=2^{2j}g_Z, \omega_j=2^{2j}\omega_Z$ and $d_j=2^j d_Z$. Let $r_j$ be the distance to $p_j$ with respect to $d_j$.  Recall that we denote by $\Psi(\epsilon;\cdots)$ any nonnegative functions depending on $\epsilon$ and some additional parameters such that when these parameters are fixed, $\lim_{\epsilon\rightarrow 0}\Psi(\epsilon;\cdots)=0$.
 
 \begin{proposition}[\cite{Liu}] \label{prop--Liu} Given any $r_0>0$, for sufficiently large $j$, we can find a plurisubharmonic function $u$ on $B\left(p_j, r_0\right)$ with
$$
\begin{gathered}
\left|u-r_j^2\right| \leq \Psi(j^{-1}) r_0^2,\\
|\nabla u|^2-4 r_j^2 \leq \Psi(j^{-1}) r_0^2\\
\ii\partial\pp u\geq (1-\Psi(j^{-1}))\omega_j \text{ on $B(p_j, r_0)\setminus B(p_j,\Psi(j^{-1})r_0)$} .
\end{gathered}
$$
 \end{proposition}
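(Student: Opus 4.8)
The plan is to reduce the statement to its analogue on the tangent cone and transplant the model plurisubharmonic function $r^2$ back to the rescalings $Z_j$, the curvature hypothesis \eqref{eq-curvature decay} being used exactly where integral estimates on $Z_j$ must be upgraded to pointwise ones. This is the argument of \cite[Proposition 2.5]{Liu}; I record how it runs in the present setting, where \eqref{eq-curvature decay} plays the role that Euclidean volume growth with quadratic curvature decay plays at infinity.

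I would argue by contradiction. If the conclusion failed, there would be $r_0>0$, $\varepsilon_0>0$ and $j_k\to\infty$ such that no $u$ on $B(p_{j_k},r_0)$ satisfies the three estimates with $\Psi\le\varepsilon_0$. By \eqref{eq-curvature decay} together with the structure and regularity theory of \cite{ChCo1,CCT,Colding,CM}, after passing to a subsequence, $(Z_{j_k},p_{j_k},g_{j_k},J_{j_k})$ converges in the pointed Gromov--Hausdorff topology, and in $C^\infty_{\mathrm{loc}}$ away from the vertex, to a Calabi--Yau cone $(\ca,o,g_\ca,J_\ca,\omega_\ca)$ with smooth link; uniqueness of the tangent cone is not needed for this argument. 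On $\ca$ the function $\rho:=r^2$ is smooth with $\rho(o)=0$, $\ii\partial\pp\rho=\omega_\ca$ and $|\nabla_{g_\ca}\rho|^2_{g_\ca}=4r^2$: this is the model. Fixing a small $\delta\in(0,1)$, the $C^\infty_{\mathrm{loc}}$ convergence gives approximating diffeomorphisms on the annulus $\{\tfrac12\delta r_0\le r_{j_k}\le r_0\}$ through which the pullback of $\rho$ already satisfies all three estimates with error $\Psi(k^{-1};\delta)$ there; the whole difficulty is inside $B(p_{j_k},\delta r_0)$, where the convergence to $\ca$ is merely metric.

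For the region near the vertex I would take $u$ to be the Cheeger--Colding approximation of $r_{j_k}^2$ on $B(p_{j_k},r_0)$ (a smoothing of the distance function as in \cite{Colding,DS2}), glued across $\{\tfrac12\delta r_0\le r_{j_k}\le\delta r_0\}$ to the function of the previous step and, near $p_{j_k}$, corrected by a regularized maximum with a small constant so that it is genuinely plurisubharmonic. The essential input is that the Cheeger--Colding almost-rigidity estimates on the almost-metric-cone $B(p_{j_k},r_0)$ --- which are a priori \emph{integral} bounds and hold up to and including the vertex --- give that $u$, $\nabla_{g_{j_k}}u$ and $\Hess_{g_{j_k}}u$ are $L^2$-close to $r_{j_k}^2$, $\nabla_{g_{j_k}}r_{j_k}^2$ and $2g_{j_k}$ respectively, with error $\Psi(k^{-1};\delta)$; in particular the $d$-closed $(1,1)$-form $\ii\partial\pp u-\omega_{j_k}$ is $L^2$-small. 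Using \eqref{eq-curvature decay} and a rescaled elliptic estimate of the type in Proposition~\ref{prop--elliptic estimate} (the form $\ii\partial\pp u-\omega_{j_k}$ solving an elliptic system whose zeroth-order coefficients are controlled by $|\Rm|$), these $L^2$ bounds promote to $\Psi(k^{-1};\delta)$ pointwise bounds on $B(p_{j_k},r_0)\setminus B(p_{j_k},\Psi(k^{-1})r_0)$ --- the excised small ball being exactly where the scale-invariant estimate degenerates --- yielding $\ii\partial\pp u\ge(1-\Psi)\omega_{j_k}$ there and, together with the Laplacian comparison for $r_{j_k}^2$ and the maximum principle, the $C^0$ estimate $|u-r_{j_k}^2|\le\Psi r_0^2$ on all of $B(p_{j_k},r_0)$; the gradient estimate $|\nabla u|^2-4r_{j_k}^2\le\Psi r_0^2$ then follows on the whole ball from the Bochner formula applied to $|\nabla u|^2-4u$, using that $\operatorname{tr}_{\omega_{j_k}}\ii\partial\pp u$ is essentially constant and the lower Ricci bound. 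Producing such $u$ for every large $k$, with errors tending to $0$ as $k\to\infty$ and then $\delta\to0$, contradicts the choice of $\varepsilon_0$.

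The step I expect to be the main obstacle is precisely this passage from the Cheeger--Colding integral estimates to pointwise estimates in the region near the vertex, where there is no smooth convergence to read off: it is the quadratic curvature decay \eqref{eq-curvature decay} that makes the elliptic-regularity bootstrap work (the same mechanism as in Proposition~\ref{prop--elliptic estimate}, where \eqref{eq--curvature decay} is used), and it is also the reason the third estimate must lose the small ball $B(p_j,\Psi(j^{-1})r_0)$. A secondary technical point is to arrange that the resulting $u$ is honestly, not merely approximately, plurisubharmonic on the full ball; this is handled by the regularized-maximum correction near $p_j$ (alternatively, by a limiting argument on the smooth approximants $X_i$, where the K\"ahler--Einstein form is exactly $\ii\partial\pp$ of a local potential).
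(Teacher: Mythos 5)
The paper does not actually reprove this statement: it quotes \cite[Proposition 2.5]{Liu} verbatim, remarking only that the quadratic curvature bound \eqref{eq-curvature decay} is what upgrades the Cheeger--Colding integral estimates to pointwise ones and that Liu's argument, written for tangent cones at infinity, localizes without essential change. Your sketch reproduces exactly that mechanism --- Cheeger--Colding approximation of $r_j^2$, elliptic estimates at scale comparable to the distance to the vertex (which is why the positivity estimate loses the ball $B(p_j,\Psi(j^{-1})r_0)$), and a regularized maximum with a small constant to restore genuine plurisubharmonicity across the vertex --- so it is essentially the same approach as the one the paper adopts by citation.
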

 
 Proposition \ref{prop--Liu} implies that there are (arbitrary small) neighborhoods of $p$ in $Z$, which have strictly pseudoconvex boundary. In the regular part of $Z$, we have smooth convergence of K\"ahler-Einstein metrics $(X_i, q_i,h_i)$, therefore there are neighnorhoods of $q_i$ with strictly pseudoconvex boundary, converging in the Gromov-Hausdorff sense. Then we can use H\"ormander $L^2$ estimate to construct holomorphic functions in a neighborhood of $q_i$, therefore repeating the argument in \cite{DS1,DS2}, one can show that $Z$ has a normal complex analytic space structure. 
 
  Let $(\ca, o, J_\ca,g_\ca, \omega_\ca, d_\ca)$ denote a tangent cone of $(Z,p, g_Z, J_Z,\omega_Z, d_Z)$ at $p$. As a consequence of \eqref{eq-curvature decay}, $\ca$ is a Calabi-Yau cone with a smooth link. Since $Z$ admits a normal analytic space structure with an isolated singularity $p$, we can also do H\"ormander $L^2$-method in a neighborhood of $p\in Z$. In particular, we have the following  \cite{DS1}.
\begin{proposition}
		For any $\lambda>0$, and any holomorphic function $f$ defined on $B(o,\lambda)\subset \ca$, for $j$ sufficiently large, there exists holomorphic functions $f_j$ defined on $B(p_j,\lambda/2)$ such that $f_j$ converges uniformly to $f$.
\end{proposition}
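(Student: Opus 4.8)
The plan is to adapt the transplant-and-correct scheme of \cite{DS1}: produce a smooth function on a neighborhood of $p_j$ that approximately agrees with $f$, control its $\pp$, and correct it to a genuine holomorphic function via Theorem \ref{thm--Hormander}, with the H\"ormander weight designed to absorb the defect. Set $A:=\sup_{\overline{B(o,4\lambda/5)}}|f|<\infty$; after subtracting $f(o)$ we may assume $f(o)=0$, so $\sup_{\overline{B(o,\epsilon)}}|f|=\Psi(\epsilon;f)$ (the case $n=1$, where $p$ is a smooth point, being trivial). By \eqref{eq-curvature decay} the rescaled metrics satisfy $|\Rm_{g_j}|\le Cr_j^{-2}$, hence have bounded curvature on each fixed annulus $\{\epsilon/2\le r_j\le\lambda\}$; together with non-collapsing and Cheeger--Colding theory this provides, for each fixed $\epsilon\in(0,\lambda)$ and all large $j$, diffeomorphisms $\Phi_j$ from a neighborhood of $\{\epsilon/2\le r\le 4\lambda/5\}\subset\ca$ onto the corresponding region of $Z_j^{reg}$ with $\Phi_j^*g_j\to g_\ca$ and $\Phi_j^*J_j\to J_\ca$ in $C^\infty_{loc}$.

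To build the approximation, fix cutoffs $\chi,\rho_\epsilon$ on $\ca$ with $\chi\equiv1$ on $B(o,3\lambda/4)$, $\operatorname{supp}\chi\subset B(o,4\lambda/5)$, and $\rho_\epsilon\equiv1$ on $\{r\ge\epsilon\}$, $\rho_\epsilon\equiv0$ on $\{r\le\epsilon/2\}$, $|\nabla_{g_\ca}\rho_\epsilon|\le C\epsilon^{-1}$. I set $w_j:=\rho_\epsilon\cdot(\Phi_j^{-1})^*(\chi f)$ on $\{r_j\ge\epsilon/2\}$ and $w_j:=0$ on $\{r_j\le\epsilon/2\}$; this is a well-defined smooth function on $B(p_j,\lambda)$ since $\operatorname{supp}d\chi$ and the transition annulus $\{\epsilon/2\le r_j\le\epsilon\}$ both lie where $\Phi_j$ is defined and $p_j$ is the only singular point. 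As $\pp_\ca(\chi f)=f\,\pp_\ca\chi$, the form $\pp w_j$ is supported in $\{\epsilon/2\le r_j\le 4\lambda/5\}$ and splits into three pieces: \textbf{(i)} the transplant of $f\,\pp_\ca\chi$, of pointwise size $\lesssim A$ but concentrated in the \emph{fixed} outer annulus $\{3\lambda/4\le r_j\le 4\lambda/5\}$; \textbf{(ii)} the transplant of $f\,\pp_\ca\rho_\epsilon$, of $L^2(\omega_j)$-norm $\lesssim\Psi(\epsilon)\epsilon^{n-1}$, by the pointwise bound $\Psi(\epsilon)\epsilon^{-1}$ and the non-collapsed volume $\sim\epsilon^{2n}$ of its support; and \textbf{(iii)} the error from $\Phi_j^*J_j\ne J_\ca$, of $L^2(\omega_j)$-norm $\Psi(j^{-1};\epsilon,f)$.

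For the correction, Proposition \ref{prop--Liu} (with $r_0=\lambda$) yields a plurisubharmonic $u_j$ on $B(p_j,\lambda)$, which we may take smooth, with $|u_j-r_j^2|\le\Psi(j^{-1})\lambda^2$ and $\ii\partial\pp u_j\ge(1-\Psi(j^{-1}))\omega_j$ on $\{r_j\ge\Psi(j^{-1})\lambda\}$. Fix a large $N$, let $\varphi_M:=Nu_j$ (plus a negligible strictly plurisubharmonic term near $p_j$ to absorb $\Ric(\omega_j)$), and set $\Upsilon:=\tfrac N2$ on $\{r_j\ge\Psi(j^{-1})\lambda\}$, extended continuously and nonnegatively over the small inner region, which $\pp w_j$ avoids. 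Working on $M:=B(p_j,\lambda)^{reg}$, which carries a complete K\"ahler metric as the regular part of a strictly pseudoconvex domain (cf.\ \cite{DS1,Liu}), Theorem \ref{thm--Hormander} with $q=1$ produces $\zeta_j$ with $\pp\zeta_j=\pp w_j$ and $\int|\zeta_j|^2e^{-Nu_j}\omega_j^n\le\tfrac2N\int|\pp w_j|^2e^{-Nu_j}\omega_j^n$. Since $u_j\le\lambda^2/4+\Psi(j^{-1})$ on $B(p_j,\lambda/2)$ while $u_j\ge 9\lambda^2/16-\Psi(j^{-1})$ on the support of \textbf{(i)}, the factor $e^{-Nu_j}$ produces a gain $e^{-cN\lambda^2}$ ($c>0$) that outweighs the loss $e^{N\lambda^2/4+\Psi}$ incurred on reading $\zeta_j$ off over $B(p_j,\lambda/2)$; thus the contribution of \textbf{(i)} to $\int_{B(p_j,\lambda/2)}|\zeta_j|^2\omega_j^n$ is $\lesssim N^{-1}A^2e^{-c'N\lambda^2}$. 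Choosing first $N$ large, then $\epsilon=\epsilon(N)$ with $\epsilon^{2n-2}e^{N\lambda^2/4}\le1$ (to handle \textbf{(ii)}), and finally letting $j\to\infty$ (to handle \textbf{(iii)}), we get $\|\zeta_j\|_{L^2(B(p_j,\lambda/2))}\to0$. Setting $f_j:=w_j-\zeta_j$, this is holomorphic on $B(p_j,\lambda/2)^{reg}$ and lies in $L^2$ near $p_j$, hence extends holomorphically across the isolated normal singularity $p_j$.

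Finally I would upgrade the $L^2$-smallness of $\zeta_j$ to $C^0$-smallness on $B(p_j,\lambda/2)$. Write $\zeta_j=h_j+k_j$ with $k_j$ a local $\pp$-potential of $\pp w_j$ on $B(p_j,3\lambda/5)$, where $\pp w_j$ consists only of the small pieces \textbf{(ii)},\textbf{(iii)}; then $k_j$ is small in $C^0$ by interior elliptic estimates, and $h_j=\zeta_j-k_j$ is holomorphic with small $L^2$ norm, so Liu's pointwise-from-integral estimate \cite[Proposition 2.5]{Liu} forces $\sup_{B(p_j,\lambda/2)}|h_j|\to0$, hence $\sup_{B(p_j,\lambda/2)}|\zeta_j|\to0$. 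Since $w_j$ equals the transplant of $f$ on $B(p_j,\lambda/2)\setminus B(p_j,\epsilon)$ while $w_j$ and $f$ are both $\Psi(\epsilon)$-small on $B(p_j,\epsilon)$, this yields $\sup_{B(p_j,\lambda/2)}|f_j-f|\to0$ under the Gromov--Hausdorff identification, which is the claim. \emph{The main obstacle is the behavior at the vertex}: the Cheeger--Colding diffeomorphisms degenerate there, forcing the inner cutoff $\rho_\epsilon$ and the volume-based estimate of piece \textbf{(ii)}, and — more essentially — requiring the final $C^0$-estimate of $\zeta_j$ to be performed \emph{across} the singular point $p_j$; this is exactly where the quadratic curvature bound \eqref{eq-curvature decay} is indispensable, via Liu's estimate. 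A secondary point is the correct balancing of the weight $Nu_j$ against the location of the outer defect \textbf{(i)}.
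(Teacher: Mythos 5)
Your proposal is correct and follows the Donaldson--Sun transplant-and-correct scheme that the paper is invoking when it cites \cite{DS1}: transplant $f$ with inner/outer cutoffs, control $\bar\partial$ of the transplant via the Cheeger--Colding diffeomorphisms, solve the $\bar\partial$-equation with the weight $e^{-Nu_j}$ built from the plurisubharmonic function of Proposition \ref{prop--Liu}, and upgrade $L^2$-smallness of the correction to $C^0$ using Liu's pointwise-from-integral estimate, which is precisely where the quadratic curvature bound \eqref{eq-curvature decay} is used. One place to tighten: in the final $C^0$-upgrade it is cleaner to observe directly that $\bar\partial\zeta_j$ vanishes on $B(p_j,\epsilon/2)$ (so $\zeta_j$ is actually holomorphic there, where Liu's estimate applies verbatim) and to run standard Schauder/elliptic estimates on the fixed annulus $\{\epsilon/2\le r_j\le\lambda/2\}$ where the metrics converge smoothly, rather than introducing the auxiliary potential $k_j$, whose choice is non-unique and whose claimed $C^0$-smallness ``by interior elliptic estimates'' near the vertex would itself need the very pointwise-from-integral input you are trying to establish.
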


Donaldson-Sun's two-step degeneration theory can be established following their original argument \cite{DS2}. In particular, the tangent cone $\ca$ is unique. This theory describes the Gromov-Haursdorff convergence of $Z_j$ to $\ca$ using complex analytic data and we summarize their results as follows.

Let $B_j$ denote the unit ball in $Z_j$ centered at $p_j$ and $B$ the unit ball in $\ca$ centered at $o$.
  Fix a distance $\mathbf{d}_j$ on $B_j\sqcup B$ that realized the Gromov-Hausdorff convergence of $B_j$ to $B$. More precisely, this means that the Hausdorff distance between $B_j$ and $B$ under $\mathbf d_j$ is $\Psi(j^{-1})$ and $\mathbf d_j(p_j,p)=\Psi(j^{-1})$. Moreover for any compact set $K$ contained in the regular part of $B$, we can find for large enough $j$ open embeddings $\chi_j$ of an open neighborhood of $K$ into $B_j$ such that $\mathbf d_j(x,\chi_j(x))=\Psi(j^{-1})$ for all $x\in K$ and $(\chi_j^*g_j,\chi_j^*J_j)$ converges smoothly over $K$ to $(g_{\ca},J_{\ca})$.

It is proved in \cite{DS2} that there exist holomorphic embeddings 
 \begin{equation}
 	F_{\infty}:(\ca,o) \rightarrow (\mathbb C^N,0) \text{ and } F_j:(B_j,p_j)\rightarrow (\mathbb C^N,0)
 \end{equation}such that 
 \begin{itemize}
 	\item[(1).] $F_{\infty}=(h_1,\cdots,h_N)$, where each $h_i$ is a $d_i$-homogeneous holomorphic function with $d_i>0$. Under the embedding $F_{\infty}$, $\xi=J_{\ca}(r\partial _r)$ extends to a linear vector field on $\C^N$ of the form $Re(\sqrt{-1}\sum_i d_i z_i\partial_{z_i})$, which we also denote by $\xi$. In particular, the dilation action $\Lambda:r\rightarrow 2r $ on $\ca$ extends to the diagonal linear transformation of $\C^N$ given by 
$$\Lambda(z_1,\cdots, z_N)=(2^{d_1}z_1, \cdots, 2^{d_N}z_N).$$

 	\item[(2).] $F_{j}=\Lambda_j\circ F_{j-1}|_{B_{j}}=\Lambda_j\circ \cdots\circ \Lambda_2\circ F_1|_{B_{j}}$, where $\Lambda_j\in G_{\xi}$. Here $G_\xi$ denotes the subgroup of $GL(N;\C)$ consisting of elements that commute with the actions generated by $\xi$. Moreover $\Lambda_j\rightarrow \Lambda$ as $j\rightarrow \infty$.
 	\item[(3).] $\mathbf{d}_j(x_j,x)\rightarrow 0$ if and only if $F_j(x_j)\rightarrow F_{\infty}(x_{\infty})\in \mathbb C^N$. 
 \end{itemize}

 Moreover there is an intrinsic \emph{intermediate K-semistable cone} $W$ associated to $(Z,p)$ and can be characterized as follows.
 Let $\mathcal O_p$ be the ring of germs of holomorphic functions on $Z$ at $p$. For any non-zero function $f\in \mathcal O_p$, one defines its order of vanishing:
 \begin{equation}
 	d_{KE}(f):=\lim _{r \rightarrow 0} \frac{\log \sup _{B(p, r)}|f|}{\log r}.
 \end{equation} Such a limit exist and is in $\mathcal S$, the holomorphic spectrum of $\ca$. We list them in order as $0=d_0< d_1< \cdots $. Then for any $k\geq 0$, one can define an ideal 
 \begin{equation*}
 	I_k=\left\{f \in \mathcal{O}_x: d_{KE}(f) \geq d_k\right\} .
 \end{equation*}
 We obtain a filtration $\mathcal{O}_p=I_0 \supset I_1  \supset \cdots$ and an associated graded ring
$
\bigoplus_{k \geq 0} I_k / I_{k+1}.
$ Then the \emph{intermediate K-semistable cone} is defined to be $$W:=\operatorname {Spec}(\bigoplus_{k \geq 0} I_k / I_{k+1})$$ which is a normal affine variety and also admits a natural $\mathbb T$ action, which defines a polarized affine cone structure on $W$. 

The relation between $W$ and $\ca$ can be described as follows. Let $W_j$ denote the weighted tangent cone of $F_j(B_j)$ with respect to the weight $(d_1,\cdots,d_N)$. 
 It is proved in \cite{DS2} that each $W_j$ is isomorphic to $W$ as polarized affine cones. As $\Lambda_j\in G_{\xi}$, commuting with $\xi$, we have $W_j=\Lambda_j(W_{j-1})$. Moreover $W_j$ converge to $F_{\infty}(\ca)$ in a certain multi-graded Hilbert scheme $\mathbf {Hilb}$. It is shown in \cite{DS2} Transverse automorphisms of the cone $\ca$, i.e., automorphisms of $\ca$ that preserve the Reeb vector field $\xi=J_\ca(r\partial r)$, form a reductive complex Lie group. Then a variant of Luna's slice theorem, proved by Donaldson \cite{Donaldson} implies the existence of a one parameter subgroup $\lambda(t)$ of $G_{\xi}$ such that $\ca=\lim_{t\rightarrow 0}\lambda(t).W$. That is $W$ admits an equivariant degeneration to $\ca$. As $\ca$ has an isolated log terminal singularity and has a quotient singularity in dimension 2, by \cite[Chapter 9]{Ishii}, we know that $W$ also has an isolated log terminal singularity. Therefore $W$ admits a Fano cone structure, justifying the terminology \textit{intermediate K-semistable cone}.

 \section{Polynomial convergence implies $\ca=W$}\label{polynomial convergence imply} Suppose $(g_Z,J_Z,\omega_Z)$ is conical of order $\delta>0$, we show in this section that $\ca $ is isomorphic to $W$ as Fano cones. This is achieved using H\"ormander $L^2$-method 
 to construct $J_Z$-holomorphic functions, whose rescalings converge directly to holomorphic functions on $\ca$.

 Let $\Phi$ denote the diffeomorphism given in the definition \eqref{def--conical}. Using the diffeomorphism $\Phi$, we will identify the neighborhood $U_{p}$ of $p$ with $U_o$, which, without loss of generality, we may assume to be $B$, the unit ball in $\ca$ centered at the vertex. We recall that $d_j$ denotes the rescaling of $d_Z$ by a factor $2^{j}$, and $B_j$ denotes the unit ball in $Z_j$, and $\Lambda$ denotes the dilation $r\rightarrow 2r $ on $\ca$.
Then using \eqref{eq_polyclose}, one can easily show the following result. 

\begin{lemma}
There exist a constant $C>0$ such that for all $x,y \in B$, and $j\in \mathbb N$, $$|d_{j}(\Lambda^{-j}.x,\Lambda^{-j}.y)-d_{\ca}(x,y)|\leq C 2^{-j\delta}.$$ 
\end{lemma}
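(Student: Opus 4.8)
The plan is to compare $d_Z$ and $d_\ca$ at the scale $2^{-j}$, using only the case $k=0$ of \eqref{eq_polyclose}. Write $a:=\Lambda^{-j}.x$ and $b:=\Lambda^{-j}.y$; since $\Lambda^{-j}$ rescales $g_\ca$-lengths by the factor $2^{-j}$ we have $r(a),r(b)\le 2^{-j}$ and $d_\ca(a,b)=2^{-j}d_\ca(x,y)\le 2^{1-j}$, while $d_j=2^j d_Z$. Thus the assertion is equivalent to $\bigl|d_Z(a,b)-d_\ca(a,b)\bigr|\le C\,2^{-j\delta}d_\ca(a,b)$. I would first record two reductions: both sides of the claimed inequality depend continuously on $(x,y)\in B\times B$, so it suffices to treat $x,y\in B\setminus\{o\}$ (hence $a,b$ regular points); and, since for each fixed $j$ the left-hand side is bounded uniformly in $x,y\in B$, the finitely many $j$ with $2^{-j}$ not small can be absorbed into $C$, so we may assume $2^{-j}$ as small as we wish. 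Finally, the case $k=0$ of \eqref{eq_polyclose} furnishes constants $C_0>0$ and $r_*\in(0,1)$ with $C_0 r_*^\delta\le\tfrac14$ such that, as quadratic forms on $B_\rho\setminus\{o\}$ for every $\rho\le r_*$, one has $(1-C_0\rho^\delta)g_\ca\le\Phi^*g_Z\le(1+C_0\rho^\delta)g_\ca$; in particular $\tfrac12 g_\ca\le\Phi^*g_Z\le 2g_\ca$ on $B_{r_*}\setminus\{o\}$.

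The heart of the argument is a standard length-space comparison, carried out inside balls of radius $O(2^{-j})$. For the upper bound on $d_Z(a,b)$ I would take, for $\epsilon>0$ small, a path in $\ca\setminus\{o\}$ from $a$ to $b$ of $g_\ca$-length at most $d_\ca(a,b)+\epsilon<3\cdot 2^{-j}$; by this length budget the path is contained in $B_{4\cdot 2^{-j}}\setminus\{o\}$, where $\Phi^*g_Z\le(1+C_0 4^\delta 2^{-j\delta})g_\ca$, so pushing it forward by $\Phi$ and letting $\epsilon\to 0$ gives $d_Z(a,b)\le(1+C_0 4^\delta 2^{-j\delta})^{1/2}d_\ca(a,b)$. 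For the lower bound I would take a path $\sigma$ in $Z^{reg}$ from $a$ to $b$ of $g_Z$-length at most $d_Z(a,b)+\epsilon$; by the upper bound its length is $\le 5\cdot 2^{-j}$ once $2^{-j}$ is small. The step that needs genuine care — and which I expect to be the main obstacle — is showing that $\Phi^{-1}(\sigma)$ remains in $B_{C'2^{-j}}\setminus\{o\}$ with $C'=1+5\sqrt2$: since $\tfrac12 g_\ca\le\Phi^*g_Z$ on $B_{r_*}\setminus\{o\}$, an excursion of $\Phi^{-1}(\sigma)$ out to radius $\rho\le r_*$ would contribute at least $\tfrac1{\sqrt2}(\rho-2^{-j})$ to the $g_Z$-length of $\sigma$, forcing $\rho\le C'2^{-j}$; in particular $\Phi^{-1}(\sigma)$ cannot reach radius $r_*$, hence never leaves the chart. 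On $B_{C'2^{-j}}\setminus\{o\}$ one then has $\Phi^*g_Z\ge(1-C_0(C')^\delta 2^{-j\delta})g_\ca$, and letting $\epsilon\to 0$ yields $d_Z(a,b)\ge(1-C_0(C')^\delta 2^{-j\delta})^{1/2}d_\ca(a,b)$.

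Combining the two bounds and using $|(1+t)^{1/2}-1|\le|t|$ for $|t|\le 1$ gives $|d_Z(a,b)-d_\ca(a,b)|\le C\,2^{-j\delta}d_\ca(a,b)$ for $2^{-j}$ small; multiplying through by $2^j$ and using $d_\ca(x,y)\le 2$ for $x,y\in B$ then yields the lemma. The reason the confinement step is the crux is that the perturbation estimate $\Phi^*g_Z=(1+O(r^\delta))g_\ca$ is available only near the vertex, so one must rule out near-minimizers of $d_Z$ that wander out of the chart before the metric comparison can be applied; and it is precisely the decay of the error as $r^\delta$, evaluated at radius $\sim 2^{-j}$, that produces the rate $2^{-j\delta}$. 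Everything else is routine length-space manipulation.
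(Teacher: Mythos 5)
Your argument is correct, and it supplies in full the proof that the paper leaves implicit: after the lemma's setup the paper simply says that the statement follows ``easily'' from \eqref{eq_polyclose}, so there is no written proof to compare against. Your plan --- reduce to a relative estimate $|d_Z(a,b)-d_\ca(a,b)|\le C\,2^{-j\delta}d_\ca(a,b)$ at the scale $2^{-j}$, use only the $k=0$ case of \eqref{eq_polyclose} to get a pointwise two-sided comparison between $\Phi^*g_Z$ and $g_\ca$ on $B_\rho$, and then carry out the length-space comparison with a confinement argument keeping near-minimizers inside a ball of radius $O(2^{-j})$ --- is the natural and evidently intended route. You correctly identify the confinement step as the only place where genuine care is required, and your handling of it (a would-be excursion to radius $\rho$ costs $g_Z$-length at least $\tfrac1{\sqrt2}(\rho-2^{-j})$, which is incompatible with the total budget) is the standard and correct way to close that gap. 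The final passage via $|(1+t)^{1/2}-1|\le|t|$ and multiplication by $2^j$ is also correct.

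Two cosmetic remarks. First, the word ``equivalent'' in ``the assertion is equivalent to $|d_Z(a,b)-d_\ca(a,b)|\le C\,2^{-j\delta}d_\ca(a,b)$'' should be ``implied by'': you prove a stronger, relative estimate and deduce the lemma from it, but the lemma does not conversely give you the relative estimate when $d_\ca(a,b)$ is very small compared to $2^{-j}$; since you only use the one direction this is harmless. Second, for the lower bound you implicitly use that $d_Z(a,b)$ is the infimum of $g_Z$-lengths of paths in $Z^{reg}$, i.e.\ that the isolated singular point $p$ does not provide a shortcut. This is standard here (the tangent cone has a smooth link and one can route near-minimizers around $p$ with arbitrarily small extra length), but since you invoke paths in $Z^{reg}$ without comment, it is worth a sentence to acknowledge that no length is lost by avoiding $p$.
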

 As mentioned in \cite{DS2}, the notion of convergence of holomorphic functions depends on the choice of the metric on $B_j\sqcup B$, which realizes the pointed Gromov-Hausodrff convergence. Moreover the argument in \cite{DS2} works for any such a choice. The above lemma implies that 
\begin{equation*}
	\Lambda^{-j}: (B, d_{\ca})\rightarrow (B_j, d_j)
\end{equation*}realizes the pointed Gromov-Hausdorff convergence as $j\rightarrow \infty$. Therefore, in the following, when we talk about the convergence of holomorphic functions, we are considering convergence under this Gromov-Hausdorff approximation.

Let $\psi$ denote the plurisubharmonic function $\log r^2-(-\log r^2)^{\frac{1}{2}}$ on $\ca$. Note that 
\begin{equation}
	\ii\partial_{J_{\ca}}\pp_{J_{\ca}}\psi=\left(1+\frac{1}{2}(-\log r^2)^{-\frac{1}{2}}\right)\frac{\ii\partial\pp r^2-4\ii\partial r\wedge\pp r}{r^2}+\frac{\ii\partial r\wedge\pp r}{r^2(-\log r^2)^{\frac{3}{2}}}.
\end{equation}
Then it follows that 
\begin{equation}\label{eq-weight function}
	\ii\partial_{J_\ca}\pp_{J_{\ca}} \psi\geq \frac{1}{(-\log r^2)^{\frac{3}{2}} r^{2}}\omega_{\ca} \text{ on $\{r^2< e^{-1}\}$}.
\end{equation}
Since $(g_Z, J_Z, \omega_Z)$ is K\"ahler-Einstein and conical of order $\delta>0$, as a consequence of \eqref{eq-weight function}, we know that in a neighborhood of $o$, for any $\epsilon>0$, there exists a constant $c_{\epsilon}>0$ such that
\begin{equation}\label{eq--positivity of psh function}
	\ii\partial_{J_Z}\pp_{J_Z} \psi+\operatorname{Ric}(\omega_Z) \geq \frac{c_{\epsilon}}{r^{2-\epsilon}}\omega_Z.
\end{equation}

There are homogeneous holomorphic functions $(h_1,\cdots,h_N)$ on $\ca$, generating $R(\ca)$ and giving an embedding 
\begin{equation*}
	F=(h_1,\cdots,h_N):\ca\rightarrow \mathbb C^N
\end{equation*} such that the Reeb vector field $\xi$ extends to a linear diagnoal vector field $\Re(\sqrt{-1}\sum_i d_i z_i\partial_{z_i})$ and the dilation $\Lambda$ on $\ca$ extends to a diagonal dilation on $\C^N$,
\begin{equation*}
	\Lambda(z_1,\cdots,z_N)=(2^{d_1}z_1,\cdots,2^{d_N}z_N),
\end{equation*}where $d_i=\deg(h_i)>0$. 

Then as $(g_Z, J_Z, \omega_Z)$ is conical of order $\delta>0$, we obtain that for all $l\geq 0$,
\begin{equation}\label{eq--decay of right hand side}
	|\nabla_{\omega_Z}^l(\pp_{J_Z}h_i)|_{\omega_Z}=O(r^{d_i-1-l+\delta}).
\end{equation}
 We choose a neighborhood $U$ of $o$ with smooth strictly pseudoconvex boundary with respect to the complex structure $J_Z$ such that \eqref{eq--positivity of psh function} holds on $U$, and choose the weight $\psi_i=(d_i+\frac{\delta}{2}+n)\psi$ and the background K\"ahler metric $\omega_Z$. Using Theorem \ref{thm--Hormander},  we can solve the $\pp_{J_Z}$-equation
\begin{equation}\label{eq-partial bar equation}
	\pp_{J_Z} u_i=\pp_{J_Z}h_i,
\end{equation}and get a solution $u_i$ of \eqref{eq-partial bar equation} with the estimate
\begin{equation}\label{eq-estimate on solution}
	\int_{U} |u_i|^2e^{-(d_i+\frac{\delta}{2}+n)\psi}\omega_Z^n<\infty.
\end{equation}
Let $f_i=h_i-u_i$.
Then we have the following.
\begin{lemma}
	For any $i\in \{1,\cdots,N\}$, $2^{jd_i}f_i(\Lambda^{-j}.x)\rightarrow h_i(x)$ uniformly as $j\rightarrow \infty$.
\end{lemma}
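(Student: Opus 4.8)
The plan is to show first that the correction term $u_i$ is small near $o$ in a pointwise sense, with a definite rate, and then to translate this into the claimed uniform convergence of the rescalings. Since $f_i = h_i - u_i$, once we know $|u_i| = O(r^{d_i + \delta'})$ for some $\delta' > 0$ slightly smaller than $\delta/2$, and since $h_i$ is exactly $d_i$-homogeneous (so $2^{jd_i}h_i(\Lambda^{-j}.x) = h_i(x)$ identically by construction of $F$ and $\Lambda$), we immediately get
\begin{equation*}
\left|2^{jd_i}f_i(\Lambda^{-j}.x) - h_i(x)\right| = 2^{jd_i}\left|u_i(\Lambda^{-j}.x)\right| \leq 2^{jd_i} \cdot C\, r(\Lambda^{-j}.x)^{d_i + \delta'} = C\, 2^{jd_i}\left(2^{-j}r(x)\right)^{d_i+\delta'} = C\, 2^{-j\delta'} r(x)^{d_i+\delta'},
\end{equation*}
which tends to $0$ uniformly for $x$ in the unit ball $B$. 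Here I am using that $r(\Lambda^{-j}.x) = 2^{-j}r(x)$, which is exactly the definition of the dilation $\Lambda$ on the cone.

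So the real content is the pointwise decay estimate for $u_i$. First I would record that $u_i$ is $J_Z$-holomorphic: $\bar\partial_{J_Z}u_i = \bar\partial_{J_Z}h_i - \bar\partial_{J_Z}h_i = 0$ on $U$, so $f_i = h_i - u_i$ is holomorphic, and $u_i$ itself satisfies $\bar\partial_{J_Z} u_i = \bar\partial_{J_Z} h_i$, hence $\Delta_{\omega_Z} u_i$ (as a function, via $\partial\bar\partial$) is controlled by $\bar\partial_{J_Z}h_i$ and its derivatives, which decay like $O(r^{d_i - 1 - l + \delta})$ by \eqref{eq--decay of right hand side} — i.e. $u_i$ is "almost harmonic" with error of order $r^{d_i - 2 + \delta}$ in the sense of Proposition \ref{prop--elliptic estimate}. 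On the integral side, the $L^2$ bound \eqref{eq-estimate on solution} with weight $e^{-(d_i + \frac{\delta}{2} + n)\psi}$ — recalling $\psi = \log r^2 - (-\log r^2)^{1/2}$, so $e^{-a\psi} \approx r^{-2a}$ up to the subexponential factor — says $\int_U |u_i|^2 r^{-2d_i - \delta - 2n}\,\omega_Z^n < \infty$ (modulo the harmless $(-\log r^2)$ factors, which one absorbs by shrinking $\delta/2$ to any $\delta' < \delta/2$). This is precisely the hypothesis of Proposition \ref{prop--elliptic estimate} with $\tau' = d_i + \delta'$ and, from the $\bar\partial$-equation, $\tau = d_i + \delta$; since $(g_Z,J_Z,\omega_Z)$ is conical, $g_Z$ satisfies \eqref{eq--new riemannian metric} after pulling back by $\Phi$, so the proposition applies and yields $|u_i| \leq A_0\, r^{\min(\tau,\tau') } = A_0\, r^{d_i + \delta'}$ on a (possibly smaller) ball, together with all the derivative bounds.

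I expect the main obstacle to be bookkeeping rather than conceptual: one must check carefully that the weighted $L^2$ finiteness in \eqref{eq-estimate on solution}, which involves the sub-polynomial corrections coming from $\psi$, really does give an integral bound of the exact form $\int u_i^2 r^{-2\tau' - 2n} < \infty$ required by Proposition \ref{prop--elliptic estimate} for some $\tau' > d_i$ — this costs an arbitrarily small loss in the exponent, so one fixes e.g. $\delta' = \delta/4$ at the outset. A second point requiring care is that Proposition \ref{prop--elliptic estimate} is stated for the model cone metric $g_\ca$ with the perturbation hypothesis \eqref{eq--new riemannian metric}; here we work with $g_Z$, but the definition of conical of order $\delta$ gives exactly \eqref{eq--new riemannian metric} for $\Phi^* g_Z$ (indeed with the quantitative rate $\Psi(s;l) = O(s^\delta)$), so after identifying $U_p \cong U_o \subset \ca$ via $\Phi$ all estimates are genuinely on the cone. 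Once the pointwise bound $|u_i(x)| \leq C r(x)^{d_i + \delta'}$ is in hand, the displayed computation above finishes the lemma.
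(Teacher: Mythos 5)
Your proposal is correct and follows essentially the same route as the paper, which likewise reduces to showing $2^{jd_i}u_i(\Lambda^{-j}.x)\rightarrow 0$ via homogeneity of $h_i$ and then combines \eqref{eq--decay of right hand side}, \eqref{eq-estimate on solution} and Proposition \ref{prop--elliptic estimate} to get the pointwise decay of $u_i$; you have simply written out the details the paper leaves implicit. One small remark: since $e^{-a\psi}=r^{-2a}e^{a(-\log r^2)^{1/2}}\geq r^{-2a}$, the weighted bound \eqref{eq-estimate on solution} already implies the pure power-weight integral bound with $\tau'=d_i+\delta/2$, so no loss of exponent is actually needed (though your choice $\delta'=\delta/4$ is harmless).
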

\begin{proof}
	As $h_i$ is homogeneous of degree $d_i$, $2^{jd_i}h_i(\Lambda^{-j}.x)=h_i(x)$. To prove the lemma, it suffices to show that $2^{jd_i}u_i(\Lambda^{-j}.x)\rightarrow 0$. This follows from \eqref{eq--decay of right hand side},\eqref{eq-estimate on solution} and Proposition \ref{prop--elliptic estimate}.
\end{proof}
 This lemma says that with respect to the Gromov-Hausdorff approximation $\Lambda^{-j}$, holomorphic function $2^{jd_i}f_i|_{B_j}$ converges to $h_i|_{B}$ on $\ca$ as $j\rightarrow \infty$. Therefore it implies that $\ca=W$ in Donaldson-Sun's two-step degeneration theory.

 \section{$\ca=W$ implies polynomial convergence}\label{imply polynomial convergence}

 The overall idea of the proof is similar to \cite{SZ2023}. Firstly using $\ca=W$, we get an almost K\"ahler-Einstein metric $(\tilde \omega,J_{\ca}, \tilde g)$ in a neighborhood of $o\in \ca$, which is (by construction) polynomially close to the singular K\"ahler-Einstein metric $(\omega_Z, J_Z, g_Z)$. Then solving a complex Monge-Amp\`ere equation, we get a Calabi-Yau metric $(\bar \omega, J_\ca, \bar g)$ in a neighborhood of $o\in \ca$, which is again polynomially close to the singular K\"ahler-Einstein metric $(\omega_Z, J_Z, g_Z)$. Finally we can apply Chiu-Szekelyhidi's result \cite{CSz} to get that $(\bar \omega, J_\ca, \bar g)$ is polynomially close to the Calabi-Yau cone metric $(\omega_{\ca}, J_{\ca}, g_{\ca})$.

  We follow the notations in Section \ref{sec--ds theory}. Moreover in the following we identify $\ca $ with its image $F_{\infty}(\ca)\subset \mathbb C^N$, and as we only care about the local geometry near $p$, we also identify $Z$ with its image $F_1(B_1)\subset \mathbb C^N$.

\subsection{Construction of the diffeomorphism}

  The following  lemma is proved in \cite[Lemma 3.6]{HS}. We include its proof for readers' convenience. Recall that $G_\xi$ denote the subgroup of  $GL(N;\C)$ consisting of elements that commute with the actions generated by $\xi$. Let $G$ denote the subgroup of $G_\xi$, which leaves $\ca$ invariant. 
    \begin{lemma}[\cite{HS}]\label{lemm--W is the same}
 	If $W$ is isomorphic to $\ca$ as polarized affine cones, then in Donaldson-Sun's 2-step degeneration theory we can make $W_j$ equal to $\ca$ for all $j$.
 \end{lemma}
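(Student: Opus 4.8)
The plan is to unwind the definitions: $W$ is by construction the weighted tangent cone of the embedded germ $F_1(B_1) \subset \mathbb{C}^N$, and Donaldson-Sun's degeneration produces a one-parameter subgroup $\lambda(t)$ of $G_\xi$ with $\mathcal{C} = \lim_{t\to 0}\lambda(t).W$. The hypothesis $W \cong \mathcal{C}$ as polarized affine cones means there is an element of $GL(N;\mathbb{C})$ intertwining the two $\mathbb{T}$-actions and carrying $W$ onto $\mathcal{C}$; since both are embedded $\mathbb{T}$-invariantly in $\mathbb{C}^N$ with the same weights $(d_1,\dots,d_N)$, this intertwiner lies in $G_\xi$. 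So first I would record that there exists $g_0 \in G_\xi$ with $g_0(W) = \mathcal{C}$, which already shows $W$ sits in the $G_\xi$-orbit of $\mathcal{C}$ inside the Hilbert scheme $\mathbf{Hilb}$.

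Next I would exploit the freedom in the choice of embeddings $F_j$. Recall $F_j = \Lambda_j \circ \cdots \circ \Lambda_2 \circ F_1|_{B_j}$ with $\Lambda_j \in G_\xi$ and $\Lambda_j \to \Lambda$, and $W_j = \Lambda_j(W_{j-1})$ with each $W_j \cong W$ as polarized affine cones. The point is that we may precompose the whole family by a fixed element of $G$ (the stabilizer of $\mathcal{C}$ in $G_\xi$) and postcompose/reorganize by the isomorphism $g_0$: replacing $F_1$ by $g_0 \circ F_1$ changes $W_1 = $ (weighted tangent cone of $F_1(B_1)$) to $g_0(W_1)$, and one checks $g_0(W_1) = \mathcal{C}$ because $W_1 \cong W \cong \mathcal{C}$ and $g_0$ realizes this isomorphism equivariantly. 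Since $\Lambda_j \in G_\xi$ and $W_j = \Lambda_j(W_{j-1})$, if $W_1 = \mathcal{C}$ then $W_2 = \Lambda_2(\mathcal{C})$, which need not equal $\mathcal{C}$ — so one must further adjust each $\Lambda_j$ by the $G$-action. The mechanism here is exactly the one in \cite[Lemma 3.6]{HS}: use the convergence $\Lambda_j \to \Lambda$ together with the fact that $\Lambda$ normalizes $\mathcal{C}$ (it is the dilation) to write $\Lambda_j = a_j \cdot b_j$ with $a_j \to \Lambda$, $b_j \to \mathrm{id}$, and absorb the $b_j$ (which push $\mathcal{C}$ off itself by a vanishingly small amount) into a modification that keeps $W_j = \mathcal{C}$; more precisely one inductively redefines the embedding at level $j$ by composing with an element of $G_\xi$ chosen so that the new weighted tangent cone is again $\mathcal{C}$, using that $W_j$ is always abstractly isomorphic to $\mathcal{C}$ and that the space of such isomorphisms is a $G$-torsor.

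Concretely the key steps, in order, are: (i) translate $W \cong \mathcal{C}$ into the existence of $g_0 \in G_\xi$ with $g_0(W) = \mathcal{C}$; (ii) replace $F_1$ by $g_0 \circ F_1$ so that $W_1 = \mathcal{C}$, noting this is a legitimate choice of embedding realizing the same Gromov-Hausdorff picture; (iii) inductively choose $\tilde\Lambda_j \in G_\xi$ replacing $\Lambda_j$ so that $\tilde\Lambda_j(W_{j-1}) = \mathcal{C}$ — this is possible because $\tilde\Lambda_j$ only needs to match $\Lambda_j$ up to an element of $G = \mathrm{Stab}_{G_\xi}(\mathcal{C})$, and such a correction exists since both $\Lambda_j(W_{j-1})$ and $\mathcal{C}$ are in the same $G$-orbit; (iv) verify that the modified $\tilde\Lambda_j$ still converge to $\Lambda$, so that all the asymptotic statements of Donaldson-Sun's theory (in particular the Gromov-Hausdorff convergence to $\mathcal{C}$ via the new embeddings) remain valid. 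I expect the main obstacle to be step (iii)-(iv): one must control the corrections so they do not destroy the convergence $\Lambda_j \to \Lambda$, which requires knowing that the stabilizer-valued ambiguity can be chosen to converge to the identity — this is where one leans on the reductivity of the automorphism group of the cone and on the precise form of Donaldson's variant of Luna's slice theorem already invoked in Section \ref{sec--ds theory}. I would follow the argument of \cite[Lemma 3.6]{HS} essentially verbatim for this part, merely checking that nothing in it used the polarized hypothesis beyond what Section \ref{sec--ds theory} has now established in our setting.
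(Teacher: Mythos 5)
Your proposal follows essentially the same route as the paper: use the hypothesis to place each $W_j$ in the $G_\xi$-orbit of $\ca$ inside $\mathbf{Hilb}$, invoke the convergence $W_j\rightarrow \ca$ together with Donaldson's variant of Luna's slice theorem to produce correcting elements of $G_\xi$ tending to the identity, and compose the embeddings with these corrections so that the weighted tangent cones become exactly $\ca$ without disturbing the asymptotics. The paper simply does this in one step, replacing $F_j$ by $h_j\circ F_j$ with $h_j\rightarrow \operatorname{Id}$ and $h_j(W_j)=\ca$, rather than via your induction on the transition maps $\Lambda_j$, but the mechanism is the same.
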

 
 \begin{proof}
 	Recall that we identify $\ca$ with $F_{\infty}(\ca)\subset \mathbb C^N$ and both $W_j$ and $F_{\infty}(\ca)$ are in a Hilbert scheme $\mathbf {Hilb}$. Since $W$ is isomorphic to $\ca$ and $W_j$ is isomorphic to $W$ as polarized affine cones, there exists $g_j\in G_{\xi}$ such that  $W_j=g_j(\ca)$. Moreover $W_j$ converges to $\ca$ in $\mathbf{Hilb}$, then a variant of Luna's slice theorem, proved by Donaldson \cite{Donaldson} implies that there exist $h_j\in G_{\xi}$ with $h_j\rightarrow \operatorname{Id}$ and $h_j(W_j)=h_j(g_j(\ca))=\ca $. Replacing $F_j$ with $h_j\circ F_j$, we can therefore guanrantee that $W_j=\ca$.
 \end{proof}

  The following lemma can be proved in a similar way as in \cite[Proposition 3.10]{SZ2023}. It plays an essential role for constructing K\"ahler metrics by pulling back their potentials.  We fix a K\"ahler cone metric $\omega_{\xi}$ on $\mathbb C^N$ with the Reeb vector field given by $\xi$. Such a metric always exists; see \cite[Lemma 2.2]{HeS}. Although when restricted to $\ca$, $\omega_{\xi}$ is different from $\omega_{\ca}$, they share the same Reeb vector field. This implies that $\omega_{\ca}$ and $\omega_{\xi}$ are uniformly comparable and they define the same Banachs space using \eqref{eq--definition of banach space} with comparable norms on it.  Denote by $r_\xi = d_{\omega_\xi}(0,·)$ the radial function on $\mathbb C^N$ defined by $\omega_{\xi}$.
  
 \begin{lemma}\label{lemma: complex structure close}
 	There exists a diffeomorphism $\Phi$ from  a neighborhood of $o\in \ca$ to a neighborhood of $p\in Z$, such that, for some $\delta_0>0$ and all $k\geq 0$, as $r_{\xi}\rightarrow 0$
\begin{equation*}
	|\nabla^k_{\omega_{\xi}}(\Phi^*J_{Z}-J_{\ca })|_{\omega_{\xi}}=O(r_{\xi}^{\delta_0-k}).
\end{equation*}  
 \end{lemma}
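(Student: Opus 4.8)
The plan is to build $\Phi$ by combining the two-step degeneration embeddings with the hypothesis $\ca = W$, exploiting that by Lemma \ref{lemm--W is the same} we may assume $W_j = \ca$ for every $j$. First, I would recall that $F_j = \Lambda_j \circ \cdots \circ \Lambda_2 \circ F_1|_{B_j}$ with $\Lambda_j \in G_\xi$ and $\Lambda_j \to \Lambda$, and that $F_j(B_j)$ has weighted tangent cone $W_j = \ca$ at the origin. The point is that, after rescaling by $\Lambda^{-j}$, the sequence $\Lambda^{-j} \circ F_j(B_j)$ is a sequence of complex subvarieties of (a fixed neighborhood of $0$ in) $\mathbb{C}^N$, each passing through $0$, whose weighted tangent cone is $\ca$; moreover they converge to $\ca$ in $\mathbf{Hilb}$. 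The smooth convergence of the Kähler-Einstein structures away from the vertex (from Donaldson-Sun, as reviewed in Section \ref{sec--ds theory}) together with the curvature bound \eqref{eq-curvature decay} upgrades this to convergence that is exponentially fast in $j$ in any fixed $C^k$-norm on compact subsets of $\ca^{reg} \cap B$, i.e. on an annular region. This is the mechanism that produces the polynomial rate: $2^{-j}$ in the rescaled picture translates to $r_\xi^{\delta_0}$ after undoing the scaling.

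Concretely, I would argue as follows. On the fixed annulus $A = \{1/2 < r_\xi < 2\} \cap \ca$, choose a tubular neighborhood of $\ca$ in $\mathbb{C}^N$ and the nearest-point projection $\pi$ onto $\ca$; since $\Lambda^{-j} F_j(B_j)$ converges to $\ca$ in $\mathbf{Hilb}$ and the convergence rate is $O(2^{-j\delta_1})$ for some $\delta_1 > 0$ (this is where $\ca = W$ is used: without it the rescaled varieties would only converge to $W_j$-translates, which need not stabilize), the composition $\pi \circ \Lambda^{-j} \circ F_j$ restricted to the annulus $A_j := \{2^{-j-1} < r_\xi < 2^{-j}\} \cap Z$ gives, after rescaling back, a diffeomorphism $\Phi_j : A \cap \ca \to A_j$-region, with $\Phi_j^* J_Z \to J_\ca$ in $C^k(A)$ at rate $O(2^{-j\delta_0})$ for all $k$. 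Then I would patch the $\Phi_j$ across overlapping dyadic annuli using a partition of unity subordinate to $\{A_j\}$; the patched map $\Phi$ is well-defined and smooth because consecutive $\Phi_j$ and $\Phi_{j+1}$ differ by $O(2^{-j\delta_0})$ in $C^k$ (both being close to the identity after the appropriate rescaling, by $\Lambda_j \to \Lambda$), and the partition-of-unity correction preserves the polynomial decay since each cutoff and its derivatives are scale-invariantly bounded. Finally, re-expressing the estimate in terms of $r_\xi = d_{\omega_\xi}(0, \cdot)$ on $\ca$ — using that on $A_j$ one has $r_\xi \sim 2^{-j}$ and that $\nabla^k_{\omega_\xi}$ scales like $r_\xi^{-k}$ — yields $|\nabla^k_{\omega_\xi}(\Phi^*J_Z - J_\ca)|_{\omega_\xi} = O(r_\xi^{\delta_0 - k})$ for all $k \geq 0$, as claimed.

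The main obstacle I anticipate is controlling the rate $\delta_0$ uniformly and making the patching genuinely smooth: one must show that the $C^k$ distance between $\Phi_j$ and $\Phi_{j+1}$ on the overlap $A_j \cap A_{j+1}$ decays polynomially, which requires that $\Lambda_j \to \Lambda$ geometrically (not merely that it converges) — this in turn comes from the exponential convergence rate in the two-step degeneration, which is available precisely because \eqref{eq-curvature decay} forces the tangent cone to be the unique limit with a definite gap in the relevant eigenvalue spectrum $\Gamma$ controlling the Łojasiewicz-type inequality. A secondary technical point is that $\Phi$ produced this way is a diffeomorphism onto a \emph{punctured} neighborhood of $p$ in $Z^{reg}$; one needs the estimate \eqref{eq-curvature decay} and the normality of $Z$ at $p$ to see that it extends continuously (indeed as a homeomorphism) across the vertex, sending $o$ to $p$. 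I would also remark that the complex structure estimate is the only one proven here; the corresponding estimates for $g_Z$ and $\omega_Z$ are deferred, since obtaining those requires first solving the Monge-Ampère equation as outlined at the start of Section \ref{imply polynomial convergence}.
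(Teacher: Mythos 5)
Your overall geometric picture (compare the rescaled variety to $\ca$ on a fixed annulus, use the nearest-point projection with respect to $\omega_\xi$, and note that holomorphicity of the graphs plus K\"ahlerness of $\omega_\xi$ gives the complex-structure estimate) matches the paper's construction, and you are also right that $\ca=W$, via Lemma \ref{lemm--W is the same}, is used to identify the weighted tangent cone with $\ca$. But there is a genuine gap at the crucial point: the source of the definite rate $\delta_0$. You derive the $O(2^{-j\delta_1})$ closeness of the rescaled slices to $\ca$ from a claimed exponential rate in the two-step degeneration ($\Lambda_j\to\Lambda$ geometrically, ``exponentially fast'' $C^k$ convergence of the rescaled K\"ahler--Einstein structures, a \L ojasiewicz-type gap coming from \eqref{eq-curvature decay}). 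No such rate is available at this stage: Donaldson--Sun theory only gives $\Lambda_j\to\Lambda$ with no rate, smooth Cheeger--Gromov convergence on the regular part carries no rate either, and in fact the whole point of Corollary \ref{thm-consequence} is that the metric convergence to the tangent cone can be merely logarithmic even when the cone is unique with smooth link. A polynomial rate for the metric/degeneration data is essentially the conclusion of Theorem \ref{thm-main theorem}, so invoking it here is circular. Since your dyadic maps $\Phi_j$ are built from the different embeddings $F_j$, the partition-of-unity patching also hinges on exactly this unavailable rate, so the construction does not close.

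The paper gets $\delta_0$ from a purely algebraic mechanism that needs no rate from the degeneration theory: fix the single embedding $F_1$ (normalized by Lemma \ref{lemm--W is the same} so that the weighted tangent cone of $F_1(B_1)$ at $p$ is $\ca$), take holomorphic defining functions $f_1,\dots,f_m$ of $Z$ near $0\in\C^N$ whose initial terms $g_i$ with respect to the $\xi$-weighting cut out $\ca$, and observe that on the unit annulus the rescaled defining functions satisfy $f_{i,l}(z)=g_i(z)+2^{-l\delta_i}e_{i,l}(z)$ with $e_{i,l}$ uniformly bounded, because the weights of $\xi$ form a discrete set. The gap $\delta_0=\min_i\delta_i>0$ therefore comes from the weighted Taylor expansion of fixed holomorphic functions, not from any spectral gap on the link or any metric estimate; the implicit function theorem then exhibits the rescaled slices as holomorphic graphs over $\ca$ within $O(2^{-l\delta_0})$, and a single nearest-point projection (no patching, since the embedding is fixed) is well defined and has the stated bounds. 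If you replace your rate input by this expansion of defining functions, the rest of your outline goes through.
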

 
 \begin{proof}
 	Recall that we identify a neighborhood of $p\in Z$ with a neighborhood of $p\in F_1(B_1)\subset \mathbb C^N$. We define $\Phi^{-1}$ to be the normal projection map to $\ca$, i.e., for $x$ in a neighnorhood of $p$, we let $\Phi^{-1}(x)$ to the unique point in $\ca $ that is closest to $x$ with respect to the metric $\omega_\xi$. We need to show that this is well-defined in a neighborhood of $p\in Z$ and satisfies the desired properties.

 	As shown in Section \ref{sec--ds theory}, $Z$ is a normal complex analytic space with an isolated singularity at $p$. Then we know that there exists holomorphic functions $f_1, \cdots,f_m$ defined in a neighborhood of $0\in \mathbb C^N$ and a neighborhood $V$ of $p$ in $Z$ such that $V$ is the common zero of $f_i$ and $\operatorname{rank}(df_1,\cdots,df_m)=N-n$ on $V\setminus {p}$. Let $g_i$ be the initial term of $f_i$
 with respect to the weight induced by $\xi$, i.e. the homogeneous term of smallest weight in the Taylor expansion of $f_i$. We may assume $f_i$ generate the ideal of $\ca$, which is equal to the weighted tangent cone of $(F_1(B_1),p)$ with respect to $\xi$.  Let $w_i$ denote the weight of $f_i$.
 	
 	For $l\geqslant 1$  we denote by $A_l$ the annulus in $\C^N$ defined by $2^{-l}\leqslant r_\xi\leqslant 2^{-l+1}$. By the conical nature of $\omega_\xi$, it suffices to consider the normal projection from $\Lambda^l(F_1(B_1)\cap A_{l+1})\subset A_1$ to $\ca $ for $l\gg1$.  
 Then the defining functions of $Y_l:=\Lambda^l(F_1(B_1)\cap A_{l+1})\subset A_1$ can be chosen to be 
 	\begin{equation}
 		f_{i,l}(z)=2^{w_il}f_i(\Lambda^{-l}.z).
 	\end{equation} 
 	As the weights of $\xi$ form a discrete set and $g_i$ is the initial term of $f_i$
 with respect to this weight, we know that there exist some holomorphic functions $e_{i l}(z)$ with $\sup_{|z|\leq 1}|e_{i,l}(z)|$ uniformly bounded independent of $i$ and $l$ and constants $\delta_{i}> 0$, such that 
 \begin{equation}
 	f_{i,l}(z)=g_i(z)+2^{-l \delta_i}e_{i,l}(z). 
 \end{equation} Let $\delta_0=\min_i \delta_i. $
 
\begin{claim}
 For $l$ sufficiently large, there are finitely many open sets $U_{\gamma}\subset \mathbb C^N$ such that $\ca\cap U_{\gamma}$ is a cover of $\ca$ and for each $\gamma$, $ Y_l\cap U_{\gamma}$ is a graph of holomorphic functions over $\ca\cap U_{\gamma}$, that is there are holomorphic functions $h_{\gamma,l}: U_\gamma\cap W\rightarrow\C^N$ with $|\nabla^k_{\omega_\xi}(h_{\gamma, l}-\operatorname{id})|_{\omega_{\xi}}\leqslant C_k 2^{-l\delta_0}$ for all $k\geqslant 0$ and  $Y_l\subset \cup_\gamma \text{Im}(h_{\gamma,l})$.
\end{claim}

To prove the \textbf{Claim}, we fix a point $z\in \ca \cap A_1$. Then we can find a neighborhood $U\subset \C^N$, such that $\ca \cap U$ is given by the zero set of $N-n$ number of  $g_\alpha$'s and $\operatorname{rank}(df_1,\cdots,df_m)=N-n$ on $U$. For simplicity of notation we may assume these are $g_1, \cdots g_{N-n}$. Shrinking $U$ if necessary and using implicit function theory, we can find local holomorphic coordinates $\{\zeta_1, \cdots, \zeta_N\}$ such that $\zeta_\alpha=g_\alpha$ for $\alpha=1, \cdots N-n$. Now we have  $f_{\alpha, l}=g_\alpha(z)+2^{-l\delta_\alpha} e_{\alpha l}(z)$.  It follows that for $l$ large the common zero set of $\{f_{\alpha, l}\}_{\alpha=1}^{ N-n}$ is a smooth complex submanifold in $U$, so in particular it agrees with $Y_l\cap U$, by shrinking $U$ is necessary. Using the local  coordinates $\{\zeta_1, \cdots, \zeta_N\}$ and implicit function theorem, it is easy to see that $Y_l\cap U$ is contained in the image of a holomorphic function $h_l:U\rightarrow \C^N$ such that $|\nabla^k_{\omega_\xi}(h_l-\operatorname{id})|_{\omega_\xi}\leqslant C_k2^{-l\delta_0}$ for all $k\geqslant 0$. Since $A_1$ is compact the \textbf{Claim} follows.  

Since the normal injectivity radius of $\ca \cap A_1$ is uniformly bounded, there is a tubular neighborhood $\mathcal N$ of $\ca \cap A_1$ such that the normal projection map $\Pi: \mathcal N\rightarrow \ca$ is smooth.  It follows from the \textbf{Claim} that for $l$ large, $Y_l\subset \mathcal N$.  So $\Phi$ is well-defined and smooth in a neighborhood of $p$. It is straightforward to check that it satisfies the desired derivative bounds.   The estimates on the complex structures follow from the fact that $h_{\gamma, l}$ is holomorphic and $\omega_{\xi}$ is a K\"ahler metric.
 \end{proof}

\subsection{Construction of the almost K\"ahler-Eisntien metrics}
Using the uniqueness of the tangent cone at $p\in Z$, one can show that, see for example \cite[Lemma 3.1]{SZ2023}, a diffeomorphism $\Phi_0: U_o\rightarrow U_p$ such that for all $k\geq 0$, 
    \begin{equation}\label{eq:weak estimates}
    	 \lim_{s\rightarrow 0}\sup_{B_s}s^k\left (\left|\nabla^k_{g_\mathcal C}(\Phi_0^*g_Z-g_{\mathcal C})\right|+\left |\nabla^k_{g_\mathcal C}(\Phi_0^*J_Z-J_{\mathcal C})\right|+\left |\nabla^k_{g_\mathcal C}(\Phi_0^*\omega_Z-\omega_{\mathcal C})\right|\right)=0.
    \end{equation}
In particular, we know that on a neighborhood of $p$ there exists a smooth function $r_Z$ which is comparable to the distance function $d_Z(p,\cdot)$. Recall that we use $r$ to denote the radial function on $\ca$ defined by the Calabi-Yau cone metric and $r_\xi$ to denote the radial function on $\mathbb C^N$ defined by $\omega_\xi$, and we know that $r$ and $r_\xi$ are comparable on $\ca$.

%%Using the diffeomorhism $\Phi$ in Lemma \ref{lemma: complex structure close}, we can also view it as a function in a neighborhood of $o\in \ca$.

Repeating the proof of \cite[Proposition 3.5]{SZ2023}, we get the following rough estimates. For simplicity of notations, we omit the pullback notation and the restriction notation, and naturally view $\omega_Z$ and $\omega_{\xi}$ as K\"ahler forms on $F_1(B_1)$ in the following.
 \begin{lemma}\label{lemma:rough estimate}
In a neighborhood of $p\in F_1(B_1)$, for all $\delta>0$ and $k\geqslant 1$, we have	\begin{equation}\label{e:rough comparison} C_\delta^{-1} r^{\delta}\omega_Z\leqslant \omega_\xi\leqslant C_\delta r^{-\delta}\omega_Z, 
 \end{equation}
 \begin{equation}\label{eqn4.3}|\nabla^k_{\omega_Z}\omega_{\xi}|_{\omega_Z}\leqslant C_{\delta,k} r^{-\delta-k}, 
 \end{equation}
 \begin{equation}\label{e:rough comparison of distance}
 C_\delta^{-1} r_Z^{1+\delta}\leqslant r_\xi\leqslant C_\delta r_Z^{1-\delta}.	
\end{equation}
  \end{lemma}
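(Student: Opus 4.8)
\textbf{Proposal for the proof of Lemma~\ref{lemma:rough estimate}.}
The plan is to run the same bootstrap argument as in \cite[Proposition 3.5]{SZ2023}, adapted to the present setting where the role of ``tangent cone at infinity'' is played by the tangent cone at $p$ and the auxiliary metric $\omega_\xi$ comes from a fixed K\"ahler cone metric on $\C^N$ restricted to $Z=F_1(B_1)$. The starting point is the weak asymptotic estimate \eqref{eq:weak estimates}, which via $\Phi_0$ says that $\omega_Z$ is, to any prescribed order of vanishing of the error, modelled on the Calabi-Yau cone metric $\omega_\ca$ near $p$, together with the fact (from Lemma~\ref{lemma: complex structure close} and the comparability of $\omega_\ca$ and $\omega_\xi$ recorded just before that lemma) that $\omega_\xi$ and $\omega_\ca$ are uniformly equivalent on $\ca$, hence on $Z$ after the normal projection $\Phi$. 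The point of the lemma is that we do \emph{not} yet know a polynomial closeness of $\omega_Z$ to a cone metric; we only know the ``$o(r^{-\delta})$'' statement \eqref{eq:weak estimates}, and we want to upgrade this to the three ``$r^{\pm\delta}$'' bounds \eqref{e:rough comparison}, \eqref{eqn4.3}, \eqref{e:rough comparison of distance}, with constants depending on $\delta$.

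First I would establish \eqref{e:rough comparison}. On the Calabi-Yau cone $\ca$ both $\omega_\xi$ and $\omega_\ca$ are $2$-homogeneous (more precisely $\mathcal L_{-J\xi}$-eigenforms for the two respective scalings that are uniformly comparable), so on $\ca\cap A_l$ one has a uniform two-sided comparison $C^{-1}\omega_\ca\le\omega_\xi\le C\omega_\ca$ independent of $l$. Pulling back by $\Phi$ and using Lemma~\ref{lemma: complex structure close} together with \eqref{eq:weak estimates}, the metric $\omega_Z$ differs from $\Phi_*\omega_\ca$ on the annulus $\{r_Z\sim 2^{-l}\}$ by a relative error that is $\Psi(2^{-l})\to 0$; hence on that annulus $C_\delta^{-1}r^\delta\omega_Z\le\omega_\xi\le C_\delta r^{-\delta}\omega_Z$ holds for $l$ large, and after absorbing finitely many annuli into the constant it holds in a punctured neighbourhood of $p$. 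Actually one even gets $C^{-1}\omega_Z\le\omega_\xi\le C\omega_Z$ on $\ca$ itself; the reason to state it with the $r^{\pm\delta}$ factors is that on $Z$ the identification $\Phi$ only controls the complex structure polynomially and $r$ vs.\ $r_Z$ vs.\ $r_\xi$ still need to be compared. Then \eqref{e:rough comparison of distance} follows by integrating: a path realizing $r_\xi$ has $\omega_\xi$-length comparable, up to $r^{\pm\delta}$, to its $\omega_Z$-length, which in turn is comparable to $r_Z$ up to \eqref{eq:weak estimates}; and conversely, giving the two-sided bound $C_\delta^{-1}r_Z^{1+\delta}\le r_\xi\le C_\delta r_Z^{1-\delta}$ after adjusting $\delta$. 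Throughout one freely uses that $r$, $r_Z$, $r_\xi$ are mutually comparable up to such powers, which reduces the three radial functions to one.

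For the derivative bound \eqref{eqn4.3} I would rescale: on the annulus $A_l$, pull everything back by $\Lambda^{-l}$ to the fixed annulus $A_1$. In the rescaled picture $\omega_\xi$ is literally fixed (it is $2$-homogeneous, so $\Lambda^*$ reproduces it up to a constant), the rescaled metrics $2^{2l}\omega_Z$ converge smoothly on compact subsets of $A_1\cap\ca^{reg}$ to the cone metric $\omega_\ca$ by \eqref{eq:weak estimates}, and the complex structures converge as well by Lemma~\ref{lemma: complex structure close}. Hence on $A_1$ all covariant derivatives $\nabla^k_{2^{2l}\omega_Z}\omega_\xi$ measured in the rescaled metric are uniformly bounded; unwinding the scaling, $\nabla^k_{\omega_Z}\omega_\xi$ measured in $\omega_Z$ is $O(2^{l(k)})=O(r^{-k})$, and inserting the harmless $r^{-\delta}$ slack to account for the comparability of the various radial functions and of $\omega_\xi$ with the model gives \eqref{eqn4.3}. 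The main obstacle I anticipate is the usual one in these rescaling arguments near an isolated singularity: one must carry out the derivative estimates only on the \emph{regular} annulus and check that the region near the singular set of $\ca$ does not contribute --- but here this is handled exactly as in \cite{SZ2023}, because $\ca$ has a smooth link (so $\ca^{reg}=\ca\setminus\{o\}$) and $p$ is an isolated singularity of $Z$, so on each annulus $A_l$ one is genuinely working on a smooth compact piece and the smooth convergence in \eqref{eq:weak estimates} applies uniformly. Consequently no real new difficulty arises and the proof is a direct transcription of \cite[Proposition 3.5]{SZ2023}; I would simply indicate the three steps above and refer to \emph{loc.\ cit.} for the routine details.
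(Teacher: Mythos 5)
Your overall route (dyadic annuli, rescaling, uniform comparability of $\omega_\xi$ and $\omega_\ca$ on the cone, integrating the metric comparison to get the distance comparison, deferring details to \cite[Proposition 3.5]{SZ2023}) is the same as the paper's, which indeed just invokes that proposition. However, your central step has a genuine gap. You claim that on the annulus $\{r_Z\sim 2^{-l}\}$ the metric $\omega_Z$ differs from the transported cone metric $\Phi_*\omega_\ca$ by a relative error $\Psi(2^{-l})$, citing \eqref{eq:weak estimates} together with Lemma \ref{lemma: complex structure close}. But \eqref{eq:weak estimates} is only known for the metrically constructed diffeomorphism $\Phi_0$ coming from uniqueness of the tangent cone, while Lemma \ref{lemma: complex structure close} controls only the complex structure under the normal projection $\Phi$; nothing identifies $\Phi_0$ with $\Phi$ (or with the embedding $F_1$) up to errors tending to zero at matching scales. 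If your claim were correct, combining it with $C^{-1}\omega_\ca\leq\omega_\xi\leq C\omega_\ca$ on the cone would give the uniform equivalence $C^{-1}\omega_Z\leq\omega_\xi\leq C\omega_Z$ and $|\nabla^k_{\omega_Z}\omega_\xi|_{\omega_Z}\leq Cr^{-k}$, with no $r^{\pm\delta}$ loss at all --- which is exactly what the paper says it does \emph{not} claim in the remark following the lemma. The same conflation appears in your proof of \eqref{eqn4.3}: the smooth convergence of the rescaled metrics to $\omega_\ca$ is realized by the Donaldson--Sun maps $F_j=\Lambda_j\circ\cdots\circ\Lambda_2\circ F_1$ with $\Lambda_j\to\Lambda$, not by the exact dilations $\Lambda^{-l}$; so you cannot work in a single picture where ``$\omega_\xi$ is literally fixed'' and simultaneously ``$2^{2l}\omega_Z\to\omega_\ca$ smoothly''.

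The missing point, which is where the $r^{\pm\delta}$ factors actually come from, is the bookkeeping between the exact weighted dilation $\Lambda$ (which scales $\omega_\xi$ and $r_\xi$ exactly) and the drifting elements $\Lambda_j\in G_\xi$ that realize the metric convergence: since $\Lambda_j\to\Lambda$ with no rate, the discrepancy of $\Lambda_j\cdots\Lambda_2$ against $\Lambda^{j-1}$ is only sub-polynomial, i.e. bounded by $C_\delta 2^{j\delta}$ for every $\delta>0$, and compounding this over the dyadic scales yields precisely the $r^{\pm\delta}$ slack in \eqref{e:rough comparison}, \eqref{eqn4.3} and \eqref{e:rough comparison of distance}. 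Attributing the $\delta$-loss instead to ``comparing the radial functions $r$, $r_Z$, $r_\xi$'' is both secondary and circular, since \eqref{e:rough comparison of distance} is itself one of the assertions being proved. So the skeleton of your argument is right, but to be a proof it must replace the unjustified $\Psi(2^{-l})$-closeness of $\omega_Z$ to $\Phi_*\omega_\ca$ in the embedded picture by the comparison through the $F_j$'s, tracking the sub-polynomial drift of the group elements; this is the actual content of \cite[Proposition 3.5]{SZ2023} being quoted.
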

 
 Note that we do not claim the uniform equivalence between $\omega_Z$ and $\omega_\xi$. Nonetheless we know the error is smaller than any polynomial order and this suffices for our applications. Then we show that we can find a K\"ahler potential for $\omega_Z$ with an almost quadratic order estimate.

\begin{lemma}\label{lemma--potential control}
	For any $\epsilon>0$, there exists a function $\psi_{\epsilon}$ in a neighborhood of $p$ such that $\omega_Z=\ii \partial \pp\varphi_{\epsilon}$ and $|\nabla_{\omega_Z}^k\varphi_{\epsilon}|=O(r_Z^{2-\epsilon-k})$ for all $k\geqslant 0$.
\end{lemma}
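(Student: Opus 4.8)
The plan is to produce $\varphi_\epsilon$ by solving a $\partial\bar\partial$-problem on a strictly pseudoconvex neighborhood of $p$, using the positivity built into the plurisubharmonic weight $\psi = \log r^2 - (-\log r^2)^{1/2}$ that already appeared in Section \ref{polynomial convergence imply}. Since $\omega_Z$ is K\"ahler and $d$-closed on the smooth locus, locally $\omega_Z = \ii\partial\bar\partial f$ for some smooth $f$; the issue is purely quantitative, namely upgrading an arbitrary local potential to one with the almost-quadratic bound $|\nabla^k_{\omega_Z}\varphi_\epsilon| = O(r_Z^{2-\epsilon-k})$. The natural way to get such a bound is to write $\varphi_\epsilon = |s|^2 - u$ where $|s|^2$ is (the pullback under $\Phi_0$, or equivalently under the embedding into $\C^N$, of) a smooth quadratic-order plurisubharmonic function comparable to $r_Z^2$ — for instance $r_\xi^2$ restricted to $Z$, which by Lemma \ref{lemma:rough estimate} is comparable to $r_Z^{2\pm\delta}$ — and then correct by solving $\ii\partial\bar\partial u = \ii\partial\bar\partial r_\xi^2 - \omega_Z$.

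First I would fix the reference potential: set $\eta := \ii\partial\bar\partial r_\xi^2 - \omega_Z$, a smooth real $(1,1)$-form on the punctured neighborhood, and observe using \eqref{eq:weak estimates} together with Lemma \ref{lemma:rough estimate} that $|\nabla^k_{\omega_Z}\eta|_{\omega_Z} = O(r_Z^{-\delta-k})$ for every $\delta>0$ — that is, $\eta$ decays faster than any negative power, because both $\omega_Z$ and $\ii\partial\bar\partial r_\xi^2$ are $2$-homogeneous to leading order and agree with the cone K\"ahler form up to errors smaller than any polynomial. Since $\eta$ is $\bar\partial$-closed (both terms are $\ii\partial\bar\partial$ of something, hence $d$-exact on the contractible punctured ball, and in any case $\partial$- and $\bar\partial$-closed), I can write $\eta = \ii\partial\bar\partial u$ by the local $\partial\bar\partial$-lemma; the task is to choose the solution $u$ with a good weighted bound. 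Concretely: $\eta$ being $\bar\partial$-closed and $\partial$-exact, write $\eta = \partial\alpha$ with $\bar\partial\alpha$ a $\bar\partial$-closed $(1,1)$-form, solve $\bar\partial\zeta = \bar\partial\alpha$ with the H\"ormander estimate of Theorem \ref{thm--Hormander} applied on a strictly pseudoconvex neighborhood $U$ (whose existence is guaranteed by Proposition \ref{prop--Liu}) with weight $\varphi_M = (2-\epsilon+n)\psi$, so that $\ii\partial\bar\partial\varphi_M + \Ric(\omega_Z) \geq c_\epsilon r_Z^{-2+\epsilon/2}\omega_Z \cdot(2-\epsilon+n) \geq \Upsilon\omega_Z$ with $\Upsilon$ of the right size by the computation \eqref{eq-weight function}–\eqref{eq--positivity of psh function}; then $u := i(\alpha - \zeta - \overline{\alpha - \zeta})/2$ (up to adding a pluriharmonic function to make it real, which can be absorbed) satisfies $\ii\partial\bar\partial u = \eta$ and has finite weighted $L^2$-norm $\int_U |u|^2 r_Z^{-2(2-\epsilon)-2n}\,\omega_Z^n < \infty$. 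The decay of the right-hand side $\eta$ being faster than any polynomial feeds the hypothesis of Proposition \ref{prop--elliptic estimate} with $\tau$ arbitrarily large and $\tau' = 2-\epsilon$, yielding $|\nabla^k_{\omega_Z} u|_{\omega_Z} = O(r_Z^{2-\epsilon-k})$. Setting $\varphi_\epsilon := r_\xi^2 - u$ gives $\ii\partial\bar\partial\varphi_\epsilon = \omega_Z$ and, since $r_\xi^2 = O(r_Z^{2-\delta})$ with all derivatives controlled by Lemma \ref{lemma:rough estimate}, the claimed bound $|\nabla^k_{\omega_Z}\varphi_\epsilon| = O(r_Z^{2-\epsilon-k})$ follows after relabeling $\epsilon$.

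The main obstacle I anticipate is the bookkeeping needed to run the H\"ormander estimate cleanly at the level of forms rather than functions: one must carefully reduce the $\ii\partial\bar\partial$-equation to a $\bar\partial$-equation for which Theorem \ref{thm--Hormander} applies, keep track of the fact that the coefficients of $\eta$ in the $\omega_\xi$-frame (which is the one in which holomorphicity and the embedding are transparent) differ from those in the $\omega_Z$-frame only by polynomially-small factors (Lemma \ref{lemma:rough estimate}), and ensure the weight $\psi$ retains its positivity \eqref{eq--positivity of psh function} on the chosen pseudoconvex $U$ — here the K\"ahler-Einstein equation $\Ric(\omega_Z) = \lambda\omega_Z$ or $\Ric(\omega_Z)\le 0$ matters only through the sign, and the dominant positivity comes from $\ii\partial\bar\partial\psi$. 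A secondary point is that the solution $u$ from H\"ormander is complex-valued and a priori only in $L^2_{\mathrm{loc}}$; its smoothness is automatic from elliptic regularity for $\bar\partial$ (or directly from Proposition \ref{prop--elliptic estimate}, noting $\Delta_{\omega_Z}u$ is controlled since $\omega_Z$ is K\"ahler-Einstein so $\Delta_{\omega_Z}$ relates to the complex Laplacian on $\ii\partial\bar\partial u = \eta$), and taking the real part costs nothing because a real $\ii\partial\bar\partial$-exact form has a real potential after adding a pluriharmonic piece, which itself can be taken to be $O(r_Z^{\mu})$ with $\mu\ge 1$ and hence harmless by Theorem \ref{thm--HS-harmonic functions}. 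I expect these to be routine once the reference potential $r_\xi^2$ is fixed, so that the only genuinely new input is the observation that $\ii\partial\bar\partial r_\xi^2 - \omega_Z$ decays faster than any polynomial — which is exactly what \eqref{eq:weak estimates} plus the comparability of $r$, $r_\xi$, $r_Z$ provides.
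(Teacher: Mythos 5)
Your shortcut has a fatal quantitative flaw. You set $\eta:=\ii\partial\pp r_\xi^2-\omega_Z$ and claim it ``decays faster than any polynomial,'' feeding Proposition \ref{prop--elliptic estimate} with $\tau$ arbitrarily large. But Lemma \ref{lemma:rough estimate} gives only two-sided comparability with losses $r^{\mp\delta}$: $\ii\partial\pp r_\xi^2$ restricted to $Z$ is $\omega_\xi|_Z$, which is a genuinely different metric from $\omega_Z$ — even on the cone itself, $\omega_\xi|_\ca$ and $\omega_\ca$ are merely \emph{uniformly comparable}, sharing only the Reeb field (this is stated explicitly before Lemma \ref{lemma: complex structure close}); and \eqref{eq:weak estimates} gives only $o(1)$ scale-invariant closeness of $\omega_Z$ to $\omega_\ca$, not polynomial closeness (polynomial closeness is the conclusion of Section \ref{imply polynomial convergence}, not an input). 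So the honest bound is $|\nabla^k_{\omega_Z}\eta|_{\omega_Z}=O(r^{-\delta-k})$ for every $\delta>0$ — your own first formula, which is scale-invariant boundedness, not decay, and is inconsistent with the ``$\tau$ arbitrarily large'' you use later. With the true bound, producing a potential for $\eta$ of order $O(r^{2-\epsilon})$ is exactly as hard as the original problem, so the decomposition $\varphi_\epsilon=r_\xi^2-u$ buys nothing.

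There is a second gap: the existence of a potential for $\eta$ (equivalently for $\omega_Z$) is asserted via a ``local $\partial\bar\partial$-lemma on the contractible punctured ball.'' The punctured neighborhood is not contractible — it retracts onto the link $L$, whose $H^2(L,\mathbb R)$ can be nonzero (e.g.\ links of cones $K_D^\times$ over Fano manifolds with $b_2(D)>1$) — and $Z$ is singular at $p$, so no off-the-shelf $\partial\bar\partial$-lemma applies. The paper's proof has to supply precisely the two ingredients you skip: (i) $d$-exactness of $\omega_Z$ near $p$, proved by noting that $|\omega_Z|_{g_\ca}=O(1)$ forces its class on the slices $\{r\}\times L$ to have representatives of $g_L$-size $O(r^2)$, hence the class vanishes; and (ii) an explicit $1$-form primitive $\eta$ with $|\nabla^k_{\omega_Z}\eta|=O(r_Z^{1-k})$, built by radial integration ($\omega_Z=dr\wedge\alpha_1+\alpha_2$, $\eta=\int_0^r\alpha_1\,dr$). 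These estimates on the primitive are what make the weighted $L^2$ hypothesis of Theorem \ref{thm--Hormander} (with weight a multiple of $\psi=\log r_\xi^2-(-\log r_\xi^2)^{1/2}$) finite when solving $\pp_{J_Z}\varphi_\epsilon=\eta^{0,1}$, after which Proposition \ref{prop--elliptic estimate} upgrades to the pointwise bound $O(r_Z^{2-\epsilon-k})$. Your proposal invokes the same Hörmander-plus-weighted-elliptic machinery, but without a primitive carrying estimates (your $\alpha$ is never constructed, and the form-degree bookkeeping around ``$\bar\partial\zeta=\bar\partial\alpha$'' is off), the $L^2$ hypothesis cannot be verified, and the only substitute you offer — rapid decay of $\eta$ — is false.
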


\begin{proof}

 	Firstly, we show that $\omega_Z$ is a $d$-exact 2-form in a neighborhood of $p$. As a consequence of \eqref{eq:weak estimates}, we know that a neighborhood of $p$ in $F_1(B_1)$ is diffeomorphic to $(0,1)\times Y$ some smooth compact manifold $L$ and $\omega_Z$ is a closed 2-form with norm bounded with respect to the cone metric $g_\ca=d r^2+r^2g_L$.
 	Note that $\omega_Z$ defines a cohomology class $\alpha$ in $H^2(L,\mathbb R)$. The fact that the norm $\omega_Z$ is bounded with respect to $g_{\ca}$, implies that in the class $\alpha$, we can find smooth representatives which is $O(r^{2})$ with respect to the fixed Riemannian metric $g_L$. Therefore this cohomology class $\alpha$ has to be $0$. Therefore we know that $\omega_Z$ is a $d$-exact 2-form in a neighborhood of $p$.
	
	Secondly, we show that $\omega_Z=d\eta$ for some real 1-form $\eta$ with 
	\begin{equation*}
		|\nabla_{\omega_Z}^k\eta|=O(r_Z^{1-k}) \text{for all $k\geq 0$}.
	\end{equation*}
	This is a standard integration argument. Writing $\omega_Z=dr\wedge \alpha_1+\alpha_2$ with $\partial_r\lrcorner \alpha_1=\partial_r\lrcorner \alpha_2=0$. Then we define $\eta = \int_{0}^r \alpha_1 dr$. Using the fact that $\omega_Z$ is $d$-exact, one can directly check that $\omega_Z=d\eta$. Then the estimate follows from \eqref{eq:weak estimates}.

 Let $\psi$ denote the plurisubharmonic function $\log r_\xi^2-(-\log r_\xi^2)^{\frac{1}{2}}$ on $\mathbb C^N$. Then by Lemma \ref{lemma:rough estimate}, we know that in a neighborhood of $o$, for any $\epsilon>0$, there exists a constant $c_{\epsilon}>0$ such that
 \begin{equation}
 	\ii\partial_{J_Z}\pp_{J_Z} \psi\geq \frac{c_\epsilon}{r^{2-\epsilon}}\omega_{Z}.
 \end{equation}
By Theorem \ref{thm--Hormander}, choosing the weight as $(n-\epsilon+2)\psi$ and using $\omega_Z$ is K\"ahler-Einstein, we can solve the $\pp$ equation with integral estimate for $\varphi_\epsilon$
 \begin{equation*}
 	\pp_{J_Z}\varphi_\epsilon=\eta^{0,1}.
 \end{equation*}  As in Section \ref{polynomial convergence imply}, using Proposition \ref{prop--elliptic estimate}, we can get pointwise estimate for $\varphi_\epsilon$.
 \end{proof}

Let $\Phi$ and $\delta_0$ be the diffeomorphism and the constant obtained in Lemma \ref{lemma: complex structure close}. Choosing $\epsilon=\frac{\delta_0}{2}$ in Lemma \ref{lemma--potential control}, we obtain a K\"ahler potential $\varphi=\varphi_{\epsilon}$. Then we can define a K\"ahler metric $(\tilde g,\tilde \omega,J_{\ca})$ in a neighborhood of $o$ by 
\begin{equation}
	\tilde\omega=\ii\partial_{J_{\ca}}\pp_{J_{\ca}}\Phi^*(\varphi), \quad \tilde g(\cdot,\cdot)=\tilde\omega(\cdot,J_{\ca}\cdot).
\end{equation}
As a consequence of Lemma \ref{lemma: complex structure close}, Lemma \ref{lemma:rough estimate} and Lemma \ref{lemma--potential control}, we obtain that in a neighborhood of $o$, there exists $\delta_1>0$ such that for all $k\geq 0$, we have
	\begin{equation}\label{eq--estimate for tilde}
		|\nabla^k_{\tilde g}(\Phi^*\omega_Z-\tilde \omega)|=O(r^{\delta_1-k})\quad \text{and} \quad
		|\nabla^k_{\tilde g}(\Phi^*g_Z-\tilde g)|=O(r^{\delta_1-k}).
	\end{equation}

\subsection{Solving complex Monge-Amp\`ere equations locally}
Next we are going to solve some complex Monge-Amp\`ere equations via Banach fixed point theorem to construct Calabi-Yau metrics in a neighborhood of $o\in \ca$. For this we need to specify the Banach spaces that we are working on. Combining \eqref{eq:weak estimates}, Lemma \ref{lemma: complex structure close} and the construction of $(\tilde g, \tilde \omega)$, we know that there exists a self-diffeomorphism $\Phi_1$ of $U_o$ such that 
\begin{equation}\label{eq: needed for Banach spaces}
    	 \lim_{s\rightarrow 0}\sup_{B_s}s^k\left (\left|\nabla^k_{\tilde g}(\tilde g-\Phi_1^*g_{\mathcal C})\right|+\left |\nabla^k_{\tilde g}(J_{\ca}-\Phi_1^*J_{\mathcal C})\right |+\left|\nabla^k_{\tilde g}(\tilde \omega-\Phi_1^*\omega_{\mathcal C})\right|\right)=0.
    \end{equation}
In the following, Banach spaces are defined with respect to the cone metric $\Phi_1^*g_{\mathcal C}$.

\begin{proposition}\label{prop--solve ma to get calabi-yau}There exists a $\delta_2>0$ such that 
	for sufficiently small $s$, there exists $\varphi\in C^{k,\alpha}_{2+\delta_2}(B_s)$ such that 
	\begin{equation}
		\bar \omega=\tilde\omega+\ii\partial\pp \varphi
	\end{equation}
 is a Calabi-Yau metric.
\end{proposition}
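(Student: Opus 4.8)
The plan is to set up a fixed-point argument for the complex Monge-Amp\`ere equation written in terms of the potential $\varphi$, using the weighted H\"older spaces $C^{k,\alpha}_{\beta}(B_s)$ attached to the cone metric $\Phi_1^*g_{\ca}$ and the uniformly bounded right inverse for the Laplacian furnished by Proposition \ref{eq--right inverse}. First I would write down the equation: since $\ca$ carries a Calabi-Yau cone structure, there is a nowhere-vanishing section $\Omega$ of $mK_{\ca}$ with $\omega_{\ca}^n = (c_n\Omega\wedge\ols{\Omega})^{1/m}$, and we seek $\bar\omega = \tilde\omega + \ii\partial\pp\varphi$ solving $(\tilde\omega+\ii\partial\pp\varphi)^n = (c_n\Omega\wedge\ols{\Omega})^{1/m}$, i.e. $(\tilde\omega+\ii\partial\pp\varphi)^n = e^{F}\tilde\omega^n$ where $e^F := (c_n\Omega\wedge\ols{\Omega})^{1/m}/\tilde\omega^n$ is the ratio. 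The key preliminary point is that $F$ is small in a weighted sense: because $\tilde\omega$ was built from a K\"ahler potential for $\omega_Z$ pulled back by $\Phi$, and $(\omega_Z,J_Z,g_Z)$ is K\"ahler-Einstein (here Ricci-flat, as $\ca$ is Calabi-Yau so the limit is Ricci-flat after the normalization), the estimates \eqref{eq--estimate for tilde} together with the cone structure give $\|F\|_{C^{k,\alpha}_{\delta_1'}(B_s)} \leq C s^{\text{something}}$, or at any rate $F \in C^{k,\alpha}_{\delta_1'}(B_s)$ with small norm for $s$ small, for some $\delta_1' > 0$; I would extract this from the fact that $\Ric(\tilde\omega)$ differs from $\Ric(\omega_Z) = 0$ by a term of positive weight, so $F$ and all its derivatives decay at a positive rate.

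Next I would run the standard iteration. Pick $\beta = \delta_2 \in (0,1)\setminus\Gamma$ with $0 < \delta_2 \leq \delta_1'$ and $\delta_2$ less than the smallest positive element of the holomorphic spectrum that matters, so that Proposition \ref{eq--right inverse} applies on $B_s$ for $s \leq s_1$, giving $\mathcal{T}_s\colon C^{k,\alpha}_{\delta_2}(B_s)\to C^{k+2,\alpha}_{2+\delta_2}(B_s)$ with $\Delta_{\tilde g}\circ\mathcal{T}_s = \mathrm{Id}$ and $\|\mathcal{T}_s\|\leq C$ uniform in $s$ — here one uses that $\tilde g$ satisfies the hypothesis \eqref{eq--new riemannian metric} relative to $\Phi_1^*g_{\ca}$ by \eqref{eq: needed for Banach spaces}. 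Rewrite the Monge-Amp\`ere equation as $\Delta_{\tilde\omega}\varphi = F + Q(\ii\partial\pp\varphi)$ where $Q$ collects the higher-order terms $\log\frac{(\tilde\omega+\ii\partial\pp\varphi)^n}{\tilde\omega^n} - \mathrm{tr}_{\tilde\omega}(\ii\partial\pp\varphi)$, which is quadratic in $\ii\partial\pp\varphi$; then define the map $\mathcal{N}(\varphi) = \mathcal{T}_s(F + Q(\ii\partial\pp\varphi))$ on the ball $\{\|\varphi\|_{C^{k+2,\alpha}_{2+\delta_2}(B_s)} \leq \rho\}$. The quadratic term obeys $\|Q(\ii\partial\pp\varphi_1) - Q(\ii\partial\pp\varphi_2)\| \leq C(\rho)(\|\varphi_1\| + \|\varphi_2\|)\|\varphi_1-\varphi_2\|$ in the weighted norms — this is where the multiplicative structure of the spaces $C^{k,\alpha}_{\beta}$ (that $C^{k,\alpha}_{\beta_1}\cdot C^{k,\alpha}_{\beta_2}\subset C^{k,\alpha}_{\beta_1+\beta_2}$, and that $\ii\partial\pp$ maps $C^{k+2,\alpha}_{2+\delta_2}$ to $C^{k,\alpha}_{\delta_2}$) does the work. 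Choosing first $\rho$ small and then $s$ small so that $\|F\|$ is correspondingly small, $\mathcal{N}$ is a contraction on that ball, and the Banach fixed point theorem yields $\varphi\in C^{k+2,\alpha}_{2+\delta_2}(B_s)\subset C^{k,\alpha}_{2+\delta_2}(B_s)$ with $\tilde\omega+\ii\partial\pp\varphi$ positive (since $\ii\partial\pp\varphi$ is lower order) and solving the Monge-Amp\`ere equation, hence $\bar\omega := \tilde\omega+\ii\partial\pp\varphi$ is Ricci-flat, i.e. Calabi-Yau.

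The main obstacle, and the step I would be most careful about, is the \emph{invertibility of $\Delta_{\tilde g}$ with a uniform-in-$s$ right inverse at the chosen weight}: this requires that $\delta_2$ avoid the exceptional set $\Gamma$ of indicial roots \eqref{eq--exceptional roots} coming from the link of $\ca$, and it requires checking that $\tilde g$ genuinely satisfies \eqref{eq--new riemannian metric} with the $\Psi$-decay relative to the cone metric $\Phi_1^*g_{\ca}$ — which is exactly the content of \eqref{eq: needed for Banach spaces}, so this is in hand, but the bookkeeping linking $\Phi_1$, $\tilde g$, and the cone metric has to be done cleanly. A secondary subtlety is ensuring the right-hand side $e^F\tilde\omega^n$ is a genuine smooth volume form near $o$ (the section $\Omega$ of $mK_{\ca}$ is only defined on $\ca^{reg}$, but since $\ca$ has smooth link and isolated singularity the ratio $F$ extends with the claimed weighted regularity), and that the weight $2+\delta_2$ on $\varphi$ is small enough that $\bar\omega$ still agrees with $\omega_{\ca}$ to positive order at $o$ so that later Chiu-Sz\'ekelyhidi's result can be invoked. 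Once the contraction is in place the rest is routine; I would state the conclusion with $\delta_2$ equal to the chosen weight, noting it can be taken to be any value in a small interval $(0,\delta_2^{\max})$ avoiding $\Gamma$.
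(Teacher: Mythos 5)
Your overall strategy — a contraction-mapping argument in the weighted H\"older spaces $C^{k,\alpha}_{\beta}(B_s)$ attached to $\Phi_1^*g_{\ca}$, invoking Proposition \ref{eq--right inverse} for the uniformly bounded right inverse and the standard quadratic estimate for $Q$ — is exactly the paper's. The genuine gap lies in how you set up the right-hand side and justify its smallness. You define $F := \log\frac{(c_n\Omega\wedge\ols{\Omega})^{1/m}}{\tilde\omega^n}$ and then assert that because $\Ric(\tilde\omega)$ has positive weighted decay, ``$F$ and all its derivatives decay at a positive rate.'' This does not follow: knowing $\ii\partial\pp F = \Ric(\tilde\omega)$ is $O(r^{\delta_1 - 2})$ controls only the complex Hessian, not $F$ itself, and $F$ could a priori differ from a decaying function by a bounded non-constant pluriharmonic term. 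The rough estimates available (Lemma \ref{lemma:rough estimate}) give only $r^{n\delta} \lesssim \tilde\omega^n/\omega_\ca^n \lesssim r^{-n\delta}$ for every $\delta > 0$, which leaves $F$ potentially of size $O(|\log r|)$ and certainly not obviously in $C^{k,\alpha}_{\delta_1'}$. Moreover the Banach spaces here are built from $\Phi_1^*g_\ca$ rather than $g_\ca$, so comparing $\tilde\omega^n$ to $\omega_\ca^n$ rather than $\Phi_1^*\omega_\ca^n$ introduces an extra Jacobian factor that you have not controlled.

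The paper sidesteps this entirely: instead of defining the forcing term as a ratio of volume forms, it notes $|\nabla^k_{\tilde g}\Ric(\tilde\omega)| = O(r^{\delta_1 - 2 - k})$ (using $\Ric(\omega_Z) = \lambda\omega_Z$ for $\lambda\in\mathbb{R}$, not necessarily $0$ — your parenthetical ``here Ricci-flat'' is incorrect in general, though since $|\lambda\omega_Z| = O(1) \ll r^{\delta_1-2}$ this does not derail the estimate) and then \emph{constructs} a potential $\tilde f$ with $\Ric(\tilde\omega) = \ii\partial\pp\tilde f$ and $|\nabla^k_{\tilde g}\tilde f| = O(r^{\delta_1/2 - k})$, by running the same $d$-exactness and H\"ormander $L^2$ argument as in Lemma \ref{lemma--potential control}. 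The Monge--Amp\`ere equation to be solved is then $(\tilde\omega + \ii\partial\pp\varphi)^n = e^{\tilde f}\tilde\omega^n$, whose solution is automatically Ricci-flat, and $\|\mathcal{F}(0)\| = \|\tilde f\|$ is small by construction. To repair your version one could argue that your $F$ differs from the paper's $\tilde f$ by a bounded pluriharmonic function, then use normality of $(\ca,o)$, the Riemann extension theorem, and the positive gap in the holomorphic spectrum to show the non-constant pluriharmonic part decays at rate $\mu_1 > 0$, and finally renormalize $\Omega$ to kill the constant; but as written none of this is done, and the assertion that $F$ is small is the missing step.
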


\begin{proof}  
Since $\omega_Z$ is K\"ahler-Einstien, i.e. $\operatorname{Ric}(\omega_Z)=\lambda \omega_Z$ for some $\lambda\in \mathbb R$. By \eqref{eq--estimate for tilde}, we know that 
\begin{equation*}
	\left|\nabla^k_{\tilde g}\operatorname{Ric}\tilde \omega\right|=O(r^{\delta_1-2-k}).
\end{equation*} Then by a similar argument to Lemma \ref{lemma--potential control}, we obtain that there exists a smooth function $\tilde f$ such that $\operatorname{Ric}(\tilde \omega)=\ii\partial\pp \tilde f$ and $|\nabla_{\tilde g}^k \tilde f|=O(r^{\frac{\delta_1}{2}-k})$. Then we want to solve
\begin{equation}
	(\tilde\omega+\ii\partial\pp \varphi)^n=e^{\tilde f} \tilde \omega^n.
\end{equation}

Fix $\delta_2\in (0,1)\setminus \Gamma$ and $\delta_2\leq \frac{\delta_1}{4}$. Here $\Gamma$ is the set in Proposition \ref{eq--right inverse}. Fix $k\geq 2n+2$ and $\alpha \in (0,1)$. Let us define 
	\begin{equation*}
		\mathcal B=\{u\in C^{k+2,\alpha}_{2+\delta_2}(B_{r_0}): \|u\|_{C^{k+2,\alpha}_{2+\delta_2}(B_{r_0})} \leq \epsilon_0\}
	\end{equation*}where $\epsilon_0$ is chosen to be sufficiently small such that $\tilde \omega+\ii \partial\pp u$ is uniformly equivalent to $\tilde\omega_0$ and $r_0$ is to determined later. Let us consider the operator
	\begin{equation*}
		\begin{aligned}
		\mathcal F:& \mathcal B\longrightarrow  C^{k,\alpha}_{\delta_2}(B_{r_0}) \\
 & u \longrightarrow \log \frac{(\tilde\omega+\ii \partial \pp u)^n}{e^{\tilde f} \tilde \omega^n}.
\end{aligned}
	\end{equation*}Then we have 
	\begin{equation}
	\begin{aligned}
		\|\mathcal F(0)\|_{C^{k,\alpha}_{\delta_2}(B_{r_0})}&\leq r_0^{\frac{\delta_1}{4}}\|\tilde f\|_{C^{k,\alpha}_{\frac{\delta_1}{2}}(B_{r_0})}\leq C_0r_0^{\frac{\delta_1}{4}},\\
		\|Q(u)-Q(v)\|_{C^{k,\alpha}_{\delta_2}(B_{r_0})}&\leq C \left(\|u\|_{C^{k+2,\alpha}_{2}(B_{r_0})}+\|v\|_{C^{k+2,\alpha}_{2}(B_{r_0})}\right)\|u-v\|_{C^{k+2,\alpha}_{2+\delta_2}(B_{r_0})}\\
		& \leq C r_0^{\delta_2}\left(\|u\|_{ C^{k+2,\alpha}_{2+\delta_2}(B_{r_0})}+\|v\|_{C^{k+2,\alpha}_{2+\delta_2}(B_{r_0})}\right)\|u-v\|_{C^{k+2,\alpha}_{2+\delta_2}(B_{r_0})}
			\end{aligned}
	\end{equation}
where $Q(u)=\mathcal F(u)-\mathcal F(0)-d_0\mathcal F(u)$.

By the estimate \eqref{eq: needed for Banach spaces}, we can apply Proposition \ref{eq--right inverse}. Let $\mathcal T$ denote the right inverse of $\Delta_{\tilde g}$. Choosing $r_0$ small, we can apply the Banach fixed point theorem to the operator  $N(u):=\mathcal  T(-\mathcal F(0)-Q(u))$.
Therefore we obtain a solution $\varphi \in C_{2+\delta_2}^{k+2,\alpha}(B_{r_0})$. 
\end{proof}

Let $\bar \omega:=\tilde \omega+\ii\partial\pp \varphi$ and $\bar g=\bar\omega(\cdot, J_\ca \cdot)$. 
Although in our setting, $(\bar \omega, J_\ca, \bar g)$ may not be the pointed Gromov-Hausdorff limit of smooth polarized K\"ahler-Einstein metrics, we still have the following:
 \begin{itemize}
 	\item  the regular part is geodesically convex since the space has only an isolated singularity and the metric is asymptotic to a cone with smooth link near this singularity;
 	\item  Donaldson-Sun's 2-step degeneration theory still applies as the tangent cones of $(\bar \omega, J_\ca, \bar g)$ still exists by construction and $o$ has arbitrary small neighborhood which are Stein so that we can use H\"ormander $L^2$-method. 
 \end{itemize} With these two properties, the argument in \cite{CSz} still works for the Calabi-Yau metric $(\bar \omega, J_\ca, \bar g)$ and therefore we get the following polynomial closeness result.

\begin{theorem}[\cite{CSz}]\label{polynomail close of the intermediate calabi-Yau metric}
	There exists a constant $\alpha >0$ and a biholomorphism $\Psi:(\tilde U_o,J_\ca )\rightarrow (U_o, J_\ca)$ such that for all $k\geq 0$, as $r\rightarrow 0$, we have 
	\begin{equation*}
		|\nabla_{g_\ca }^k(\Psi^*g-g_{\ca})|=O(r^{\alpha-k}).
	\end{equation*}
\end{theorem}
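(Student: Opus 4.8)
The plan is to invoke the results of Chiu–Sz\'ekelyhidi \cite{CSz} essentially as a black box, after checking that the Calabi-Yau metric $(\bar\omega,J_\ca,\bar g)$ constructed in Proposition \ref{prop--solve ma to get calabi-yau} satisfies the hypotheses needed to run their argument. First I would record the two structural facts listed just before the statement: (i) the regular part of a small punctured neighborhood of $o$ in $\ca$ is geodesically convex, because $\ca$ has an isolated singularity and, by \eqref{eq: needed for Banach spaces} together with the $C^{k,\alpha}_{2+\delta_2}$ bound on $\varphi$, the metric $\bar g$ is uniformly asymptotic to the smooth-link cone metric $\Phi_1^*g_\ca$ near $o$; and (ii) $o$ admits a basis of Stein neighborhoods (the sublevel sets of the plurisubharmonic function $\psi=\log r_\xi^2-(-\log r_\xi^2)^{1/2}$ restricted to $\ca$, or equivalently the sets cut out by $F_\infty$ in $\mathbb C^N$), so the H\"ormander $L^2$-method of Theorem \ref{thm--Hormander} is available on these neighborhoods. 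These are exactly the two ingredients that Donaldson–Sun's two-step degeneration machinery, and hence the Chiu–Sz\'ekelyhidi refinement, actually use; the polarization that appears in the original statements of \cite{DS1,DS2,CSz} is not needed here because we already have the normal affine cone structure $R(\ca)$ on the model and the ambient embedding into $\mathbb C^N$.

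Given (i) and (ii), the next step is to observe that $(\bar\omega,J_\ca,\bar g)$ has a \emph{unique} tangent cone at $o$, namely $\ca$ itself with its Calabi-Yau cone metric: by construction $\bar g$ differs from $\Phi_1^*g_\ca$ by terms decaying to zero under rescaling (combine \eqref{eq--estimate for tilde}, \eqref{eq: needed for Banach spaces}, and $\varphi\in C^{k,\alpha}_{2+\delta_2}$, which gives $\ii\partial\pp\varphi=O(r^{\delta_2})$ relative to the cone metric). So the Donaldson–Sun two-step degeneration of $(\ca,o,\bar g)$ has intermediate cone $W'$ and limit cone equal to $\ca$, and since the model is already biholomorphic to $\ca$ the degeneration is trivial, i.e. $W'=\ca$. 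This is precisely the situation treated in \cite{CSz}: a Calabi-Yau metric on a germ that is biholomorphic to its own tangent cone, with smooth link. Their theorem then produces a constant $\alpha>0$ and a biholomorphism $\Psi$ between neighborhoods of $o$ fixing $J_\ca$ such that $|\nabla^k_{g_\ca}(\Psi^*g-g_\ca)|=O(r^{\alpha-k})$ for all $k$, which is the assertion of Theorem \ref{polynomail close of the intermediate calabi-Yau metric}. One should note that while \cite{CSz} phrase their closeness at the level of K\"ahler potentials, this is equivalent to the stated two-sided derivative bounds on the metrics after composing with a diffeomorphism, because the background is a fixed smooth-link cone and elliptic bootstrapping (Proposition \ref{prop--elliptic estimate} and the weighted Schauder theory behind Proposition \ref{eq--right inverse}) upgrades $C^0$ potential control to all-order metric control.

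I would then spell out the small amount of bookkeeping needed to make ``the argument in \cite{CSz} still works'' precise: one traces through their proof and checks that each appeal to the polarized hypothesis is either (a) only used to guarantee the Gromov–Hausdorff limit structure and normality, which we have directly here, or (b) only used to run H\"ormander $L^2$ on Stein neighborhoods, which we also have. In particular the key intermediate objects in \cite{CSz} — the approximate holomorphic functions on $(\ca,\bar g)$ whose rescalings converge to homogeneous generators $h_i$ of $R(\ca)$, the matching of the filtration induced by $d_{KE}$ with the weight filtration, and the slice-theorem step identifying $W'$ with $\ca$ — all go through verbatim once (i) and (ii) are in place. The main obstacle, and the only genuinely non-formal point, is the verification in the previous step that the Chiu–Sz\'ekelyhidi machinery does not secretly use more than pseudoconvexity and the cone-with-smooth-link asymptotics — for instance that their use of Donaldson's variant of Luna's slice theorem only needs the reductivity of the transverse automorphism group of $\ca$ (which holds, since $\ca$ has a Calabi-Yau cone metric) and not any global projectivity. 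Once that audit is complete the theorem follows immediately.
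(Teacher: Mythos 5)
Your proposal is correct and follows essentially the same route as the paper. The paper's justification is precisely to observe that (i) the regular part near $o$ is geodesically convex because the space has an isolated singularity and the constructed metric $\bar g$ is asymptotic to a cone with smooth link, and (ii) $o$ admits a basis of Stein neighborhoods so the H\"ormander $L^2$-method applies and Donaldson–Sun's two-step degeneration machinery (and hence the Chiu–Sz\'ekelyhidi argument) runs, the polarization hypothesis in \cite{CSz} being used only to secure these two structural facts; your audit of where the polarization enters, the remark that the tangent cone is forced to be $\ca$ by construction, and the comment about reductivity of the transverse automorphism group for the slice-theorem step are exactly the points the paper is silently relying on.
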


Then the polynomial closeness between $(\omega_Z, J_Z, g_Z)$ and $(\omega_\ca, J_\ca, g_\ca)$ follows from \eqref{eq--estimate for tilde}, \eqref{eq: needed for Banach spaces}, Propsition \ref{prop--solve ma to get calabi-yau} and Theorem \ref{polynomail close of the intermediate calabi-Yau metric}.

\section{Examples}\label{sec-examples}
 We construct examples of singular K\"ahler-Einstein metrics, which have a unique tangent cone with smooth link, yet are not polynomially close to their tangent cones. In \cite{GChen}, a Riemannian metric with $G_2$ holonomy, in particular Ricci-flat, and exhibiting only a logarithmic rate of convergence to the cone is constructed. A key feature of our examples is that they can also be realized as Gromov-Hausdorff limits of non-collapsing Calabi-Yau metrics, and therefore in particular showing that the result of Colding-Minicozzi \cite{CM} is sharp in a certain sense. This was first pointed out in \cite{HS}.

Let $D$ be an $(n-1)$-dimensional Fano manifold and $\Omega$ be a nowhere vanishing holomorphic volume form $\Omega$ on $K_D$, the canonical bundle of $D$. Let $h_D$ deonte a Hermitian metric with negative curvature form, $\xi\in K_D$, then $\psi=\|\xi||^2_{h_D}$ is a plurisubharmonic function on $K_D$ and it is strictly plurisubharmonic on $K_D\setminus D$. Then for any $\delta>0$, $\{\psi<\delta\}$ is a domain containing $D$, with a strictly pseudoconvex boundary. Therefore there is a holomorphic map $\pi:(K_D,D)\rightarrow (K_D^{\times},o)$ which blows-down the zero section. We may assume $K_D^{\times}$ is holomorphicallly embedded in $\mathbb C^N$. Let $\omega_{0}$ denote the restriction of the Euclidean metric to $K_D^{\times}$. Using $\pi$, we identify it with an semi-positive form on $K_D$.

Fix a small $\delta>0$ such that $\mathcal U=\{\psi< \delta\}$ has smooth strictly pseudoconvex boundary. Fix a K\"ahler form $\eta$ on $K_D$.  Then for $0<\epsilon< 1$, we consider the following Monge-Ampere equations with Dirichlet boundary condition:
 \begin{equation}\label{eq-cma}
 	\left\{\begin{array}{l}
\left(\omega_0+\epsilon \eta+\sqrt{-1} \partial \bar{\partial} \varphi_\epsilon\right)^n=\ii^{n^2} \Omega\wedge\ols{
\Omega} \\
\left.\varphi_\epsilon\right|_{\partial \mathcal{U}}=0.
\end{array}\right.
 \end{equation}
 
The following result is proved in \cite{Bouc,Fu,GGZ23}, following the fundamental work \cite{CKNS,Kol}. For readers' convenience, we give a sketch of the proof and refer to \cite{Bouc,Fu,GGZ23} for details. Let $\omega_{\epsilon}$ denote $\omega_0+\epsilon \eta+\sqrt{-1} \partial \bar{\partial} \varphi_\epsilon$ when a solution of \eqref{eq-cma} exists.
\begin{proposition}[\cite{Bouc,Fu,GGZ23}]
	For any $\epsilon\in (0,1)$, the equations \eqref{eq-cma} admits a unique smooth solution $\varphi_{\epsilon}$.  There exist  constants $C$ independent of $\epsilon$ such that 
	\begin{equation}
	\left\|\varphi_{\epsilon}\right\|_{L^{\infty}}\leq C\text{ and }
	C^{-1}\omega_0\leq \omega_\epsilon\leq C\frac{\ii^{n^2}\Omega\wedge\ols{\Omega}}{\omega_0^n}\omega_0.
			\end{equation}
			Moreover, for any $k\geq 0 $ and compact subset $K\subset \subset \ols{ \mathcal U}\setminus D$, there exists constants $C_{K,k}$ such that 
			\begin{equation}\label{eq-local higher estimate}
	\|\nabla_{\omega_0}^k\varphi_{\epsilon}\|_{L^{\infty}(K)}\leq C_{K,k}.
		\end{equation}
\end{proposition}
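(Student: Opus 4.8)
The plan is to prove the three assertions of the proposition—existence and uniqueness of the smooth solution, the uniform $L^\infty$ and metric bounds, and the local higher-order estimates away from $D$—by the standard continuity method combined with the a priori estimates of Caffarelli-Kohn-Nirenberg-Spruck \cite{CKNS}, adapted to the degenerate right-hand side as in \cite{Bouc,Fu,GGZ23}. For fixed $\epsilon\in(0,1)$ the reference form $\omega_0+\epsilon\eta$ is a genuine Kähler metric on a neighborhood of $\ols{\mathcal U}$ (since $\eta$ is Kähler and $\omega_0$ is semipositive), and $\mathcal U$ has smooth strictly pseudoconvex boundary, so the Dirichlet problem \eqref{eq-cma} is of the type solved in \cite{CKNS}. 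The only subtlety is that the right-hand side $\ii^{n^2}\Omega\wedge\ols{\Omega}$ vanishes along $D$ to the order dictated by $K_D|_D$, i.e. the density $\ii^{n^2}\Omega\wedge\ols{\Omega}/\omega_0^n$ is bounded but degenerate; this is handled by approximating with $\ii^{n^2}\Omega\wedge\ols{\Omega}+\tau\,\omega_0^n$, solving for $\tau>0$ by \cite{CKNS}, deriving the $\tau$-independent estimates below, and letting $\tau\to0$. Uniqueness follows from the comparison principle for the complex Monge-Ampère operator with the given boundary condition.

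First I would establish the uniform $L^\infty$ bound on $\varphi_\epsilon$. The upper bound comes from the maximum principle: at an interior maximum $\ii\partial\pp\varphi_\epsilon\le0$, so $(\omega_0+\epsilon\eta)^n\ge\ii^{n^2}\Omega\wedge\ols{\Omega}$ there, which forces a lower bound on $\varphi_\epsilon$'s normal derivative structure; more cleanly, since the density $\ii^{n^2}\Omega\wedge\ols{\Omega}/\omega_0^n$ is globally bounded and $\omega_0+\epsilon\eta\ge\omega_0$, one gets $\varphi_\epsilon\le C$ by comparison with a fixed bounded psh function. The lower bound is the Kolodziej-type estimate \cite{Kol}: the right-hand side has bounded density against $\omega_0^n$, hence against a fixed volume form, uniformly in $\epsilon$, and the total mass $\int_{\mathcal U}\ii^{n^2}\Omega\wedge\ols\Omega$ is finite and $\epsilon$-independent, so the pluripotential $L^\infty$ estimate gives $\|\varphi_\epsilon\|_{L^\infty}\le C$ uniformly. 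With the $C^0$ bound in hand, the Laplacian (second-order) estimate follows from the Aubin-Yau computation: $\Delta_{\omega_\epsilon}$ applied to $\log\operatorname{tr}_{\omega_0}\omega_\epsilon-A\varphi_\epsilon$ together with the upper bound $\omega_\epsilon^n=\ii^{n^2}\Omega\wedge\ols\Omega\le C\omega_0^n$ yields $\operatorname{tr}_{\omega_0}\omega_\epsilon\le C$ on the interior, and the boundary second-order estimates are exactly those of \cite{CKNS} using the strict pseudoconvexity of $\partial\mathcal U$ and barriers built from the defining function; combining the two displays $C^{-1}\omega_0\le\omega_\epsilon\le C\,(\ii^{n^2}\Omega\wedge\ols\Omega/\omega_0^n)\,\omega_0$.

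Finally, the local higher-order estimates on compact $K\subset\subset\ols{\mathcal U}\setminus D$ come from the fact that on such $K$ the density $\ii^{n^2}\Omega\wedge\ols\Omega/\omega_0^n$ is smooth, bounded, and bounded away from zero, so $\omega_\epsilon$ is uniformly equivalent to $\omega_0$ there by the metric bound just proved; hence \eqref{eq-cma} is a uniformly elliptic complex Monge-Ampère equation on a neighborhood of $K$ with smooth $\epsilon$-independent coefficients, and the Evans-Krylov theorem gives an interior $C^{2,\alpha}$ bound, after which Schauder bootstrapping yields $\|\nabla_{\omega_0}^k\varphi_\epsilon\|_{L^\infty(K)}\le C_{K,k}$ for all $k$. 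The main obstacle is the degeneracy of the right-hand side along $D$: it obstructs both the naive global gradient/Laplacian estimates up to $D$ and any global higher regularity, which is why the statement only claims local estimates away from $D$; the resolution is the bounded-density pluripotential estimate of \cite{Kol} for $C^0$ and the observation that all interior higher-order arguments only see $K\subset\subset\ols{\mathcal U}\setminus D$, where ellipticity is uniform. For the complete details of each of these steps I refer to \cite{Bouc,Fu,GGZ23,CKNS,Kol}.
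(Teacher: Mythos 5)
Your outline has the degeneracy of the right-hand side backwards, and this propagates into a genuine gap in the key metric estimate. In this example $\Omega$ is a \emph{nowhere vanishing} holomorphic volume form on the total space $K_D$, while $\omega_0=\pi^*\!\left(\omega_{\mathrm{euc}}|_{K_D^\times}\right)$ is only semipositive and degenerates along the contracted divisor $D$; hence the density $\ii^{n^2}\Omega\wedge\ols{\Omega}/\omega_0^n$ is not ``bounded but degenerate'' --- it blows up along $D$ and is only in $L^p$ ($p>1$) against a fixed smooth volume form, by the log terminal property of $K_D^\times$. Consequently your justification of the uniform $L^\infty$ bound (``bounded density against $\omega_0^n$'') does not apply as stated; what is really needed, and what the paper invokes from \cite{Fu,GGZ23} (after \cite{Kol}), is the pluripotential $L^\infty$ estimate for $L^p$ densities on the pseudoconvex domain, with $p$ and the $L^p$ norm independent of $\epsilon$. (Also, for fixed $\epsilon$ the right-hand side is a smooth strictly positive volume form, so your $\tau$-regularization is unnecessary; the paper instead produces the subsolution $A(\psi-\delta)$ and quotes \cite[Theorem A]{Bouc} for existence, with uniqueness by the maximum principle, as you say.)

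The more serious problem is the second-order estimate. The Aubin--Yau quantity $\log\operatorname{tr}_{\omega_0}\omega_\epsilon-A\varphi_\epsilon$ aims at $\operatorname{tr}_{\omega_0}\omega_\epsilon\leq C$, i.e.\ $\omega_\epsilon\leq C\omega_0$, which cannot hold with $\epsilon$-independent constants near $D$: restricted to $TD$ one has $\omega_0=0$ while $[\omega_\epsilon|_D]=\epsilon[\eta|_D]\neq 0$. Moreover its ingredients fail: $\omega_\epsilon^n\leq C\omega_0^n$ is false (same reason as above), and the argument needs a uniform lower bound on the bisectional curvature of the reference metrics $\omega_0+\epsilon\eta$, which degenerate as $\epsilon\to 0$. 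Finally, even granting it, this inequality is the \emph{wrong direction}: the proposition's two-sided bound requires the lower bound $C^{-1}\omega_0\leq\omega_\epsilon$, which your proposal never derives but then uses when ``combining the two displays.'' The paper's argument is the Chern--Lu inequality applied to $\operatorname{tr}_{\omega_\epsilon}\omega_0$, which only needs an \emph{upper} bound on the bisectional curvature of $\omega_0$ --- available precisely because $\omega_0$ is the restriction of the flat Euclidean metric of $\mathbb C^N$ --- together with the uniform $L^\infty$ bound on $\varphi_\epsilon$; this gives $\omega_0\leq C\omega_\epsilon$, and the upper bound $\omega_\epsilon\leq C\left(\ii^{n^2}\Omega\wedge\ols{\Omega}/\omega_0^n\right)\omega_0$ then follows from the Monge--Amp\`ere equation by comparing eigenvalues. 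Your last step (uniform equivalence of $\omega_\epsilon$ and $\omega_0$ on $K\subset\subset\ols{\mathcal U}\setminus D$, then Evans--Krylov and Schauder, as in \cite{DFS,Fu}) is fine once these correct bounds are in place.
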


\begin{proof} The uniqueness of the solution follows from the maximal principle for complex Monge-Ampere equations.
	 Note that $A(\psi-\delta)$ is subsolution of \eqref{eq-cma} for $A$ sufficiently large, depending on $\epsilon$. Therefore by \cite[Theorem A]{Bouc}, we know that \eqref{eq-cma} admits a smooth solution for any $\epsilon\in (0,1)$.
	
	  The uniform $L^{\infty}$ bound for $\varphi_{\epsilon}$ is proved in \cite{Fu,GGZ23}.
%	  It is very likely that one can also give a PDE proof by applying the recent work in \cite{GPT}.
	   The comparison between $\omega_0$ and $\omega_{\epsilon}$ can be derived from the Chern-Lu's inequality and the uniform $L^{\infty}$ bound for $\varphi_{\epsilon}$, since $\omega_0$ is the restriction of the Euclidean metric to $U$ and hence has a uniform upper bound on the the bisectional curvature. See \cite{DFS, Fu} and references therein for the uniform higher order estimate \eqref{eq-local higher estimate}.
	   \end{proof}

As we are going to take a pointed Gromov-Hausdorff limits of $\omega_{\epsilon}$, we fix a point $p\in D$ and establish the following uniform non-collapsing result. 
\begin{lemma}\label{lemma--diameter bound}
	There exists constants $C$ independent of $\epsilon$ such that 
	\begin{equation}\label{eq-diamter bound}
		\sup_{x\in \mathcal U}d_{\omega_{\epsilon}}(p,x)\leq C. 
	\end{equation} Therefore there exists a $\kappa>0$ such that for every $r <\min\{1, \operatorname{dist}(x,\partial \mathcal U)\}$, we have 
	\begin{equation}
	\operatorname{Vol}(B(x,r),\omega_\epsilon)\geq \kappa r^{2n}.
\end{equation}
\end{lemma}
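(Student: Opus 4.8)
The plan is to establish the diameter bound \eqref{eq-diamter bound} first and then deduce the non-collapsing from it by volume comparison. Two preliminary observations set the stage. First, $\omega_\epsilon$ is genuinely Ricci-flat: since $\omega_\epsilon^n=\ii^{n^2}\Omega\wedge\ols{\Omega}$ with $\Omega$ the fixed nowhere vanishing holomorphic volume form on the total space $K_D$ (whose canonical bundle is trivial), the right-hand side is a fixed smooth positive volume form and $\operatorname{Ric}(\omega_\epsilon)=0$, so Bishop--Gromov volume comparison with $\operatorname{Ric}\ge 0$ is available. Second, the total volume $\operatorname{Vol}(\mathcal U,\omega_\epsilon)=\int_{\mathcal U}\ii^{n^2}\Omega\wedge\ols{\Omega}=:V_0$ is a fixed positive number independent of $\epsilon$. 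Note also that \eqref{eq-diamter bound} is equivalent to a uniform bound on $\operatorname{diam}(\mathcal U,\omega_\epsilon)$.

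For the diameter bound I would split $\ols{\mathcal U}$ into a fixed collar neighborhood $\mathcal N$ of $\partial\mathcal U$, chosen small enough that $\ols{\mathcal N}\subset\ols{\mathcal U}\setminus D$, and its complement. On $\mathcal N$ the interior estimates \eqref{eq-local higher estimate} bound $\varphi_\epsilon$ uniformly in every $C^k$, so $\omega_\epsilon$ is uniformly comparable to the fixed smooth metric $\omega_0|_{\ols{\mathcal N}}$; this bounds the $\omega_\epsilon$-width of the collar and, together with $\omega_\epsilon\ge C^{-1}\omega_0$ and the fact that $\omega_0$ is pulled back via $\pi$ from the Euclidean metric on $K_D^\times$ with $\pi$ contracting $D$ to the vertex $o$, which lies at positive Euclidean distance from the compact set $\pi(\partial\mathcal U)$, it yields a uniform lower bound $d_{\omega_\epsilon}(D,\partial\mathcal U)\ge c_0>0$. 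It then remains to bound the $\omega_\epsilon$-distance from a fixed interior point to points approaching $D$, and this is the main obstacle: $\omega_0$ degenerates along $D$, so the crude bound $\omega_\epsilon\le C\,\dfrac{\ii^{n^2}\Omega\wedge\ols{\Omega}}{\omega_0^n}\,\omega_0$ blows up there and gives no control, and one has to appeal to the uniform diameter estimates for solutions of complex Monge--Amp\`ere equations. Since the density of $\omega_\epsilon^n=\ii^{n^2}\Omega\wedge\ols{\Omega}$ relative to a fixed smooth volume form on $\ols{\mathcal U}$ is uniformly bounded above and below, since $\|\varphi_\epsilon\|_{L^\infty}\le C$, and since the boundary values $\varphi_\epsilon|_{\partial\mathcal U}=0$ together with \eqref{eq-local higher estimate} control $\omega_\epsilon$ near $\partial\mathcal U$, the diameter estimates of \cite{Fu,GGZ23} apply and give $\operatorname{diam}(\mathcal U,\omega_\epsilon)\le C$, which is \eqref{eq-diamter bound}.

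For the non-collapsing, fix $x\in\mathcal U$ and $0<r<r_1:=\min\{1,d_{\omega_\epsilon}(x,\partial\mathcal U)\}$. Then $\ols{B(x,r_1)}$ is compactly contained in $\mathcal U$ and every minimizing geodesic from $x$ to a point of $B(x,r_1)$ stays in the interior, so Bishop--Gromov applies up to radius $r_1$ and gives $\operatorname{Vol}(B(x,r))\,r^{-2n}\ge\operatorname{Vol}(B(x,r_1))\,r_1^{-2n}$; hence it suffices to prove $\operatorname{Vol}(B(x,r_1))\ge\kappa\,r_1^{2n}$ with $\kappa$ uniform in $x$ and $\epsilon$. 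When $x$ lies in the collar $\mathcal N$, then $r_1$ is comparable to $d_{\omega_\epsilon}(x,\partial\mathcal U)$ and $\omega_\epsilon$ is uniformly comparable near $x$ to a fixed smooth metric, so the claim reduces to the standard lower volume bound for balls centered near a smooth compact hypersurface. When $x$ lies in the complement, $d_{\omega_\epsilon}(x,\partial\mathcal U)\ge c_0>0$ forces $r_1$ to be bounded below by a fixed constant, and it is enough to show that $\omega_\epsilon$-balls of that fixed radius centered at such points have volume bounded below; I would prove this by contradiction, using that a sequence $(\mathcal U,\omega_{\epsilon_i},x_i)$ of such spaces --- of uniformly bounded diameter and fixed total volume, with $\operatorname{Ric}=0$ and with the interior estimates \eqref{eq-local higher estimate} in force away from $D$ --- subconverges (after extracting $\epsilon_i\to\epsilon_\infty$) to a Ricci-flat K\"ahler space that is smooth if $\epsilon_\infty>0$ and carries at worst the isolated singularity $o\in K_D^\times$ if $\epsilon_\infty=0$, hence is a non-collapsed length space of Hausdorff dimension $2n$ whose limit measure has no atoms; by the volume continuity in Cheeger--Colding theory the ball volumes converge to a positive limit, a contradiction. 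Combining the two cases produces the uniform $\kappa>0$, which together with the diameter bound completes the proof.
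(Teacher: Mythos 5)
Your overall strategy (collar near $\partial\mathcal U$ plus control near $D$, then Bishop--Gromov) is reasonable, but at the two crucial points the argument is not actually carried out. First, the diameter bound: you outsource the main difficulty to ``the uniform diameter estimates for solutions of complex Monge--Amp\`ere equations'' of \cite{Fu,GGZ23}, but those references are used in this paper only for the uniform $L^\infty$ bound on $\varphi_\epsilon$ and the local higher-order estimates \eqref{eq-local higher estimate}; they do not provide a uniform diameter bound for the degenerate Dirichlet family \eqref{eq-cma}, in which the background form $\omega_0+\epsilon\eta$ degenerates along $D$ and the domain has boundary. Diameter estimates of Guo--Phong--Song--Sturm type \cite{GPSS22,GPSS23} can be adapted to such a boundary setting, but that adaptation (Green's function with Dirichlet boundary condition, weighted Sobolev inequality, iteration) is genuinely nontrivial --- it is exactly what Appendix~2 of the paper does in the different context of Section~\ref{Section--local} --- so it cannot simply be cited away here. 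The paper's actual proof of \eqref{eq-diamter bound} is much shorter and uses precisely the two facts you record at the outset: $\operatorname{Ric}(\omega_\epsilon)=0$ and the fixed total volume $\int_{\mathcal U}\ii^{n^2}\Omega\wedge\ols{\Omega}$. If the bound failed, then since $\omega_\epsilon$ is uniformly controlled near $\partial\mathcal U$ by \eqref{eq-local higher estimate} one would have $d_{\omega_\epsilon}(p,\partial\mathcal U)\to\infty$, and Yau's linear volume growth theorem for nonnegative Ricci curvature \cite[Theorem I.4.1]{SY} would force the volume of large balls about $p$ to grow at least linearly in the radius, contradicting the uniform volume bound. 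Either reproduce this argument or carry out the GPSS scheme in detail; as written, the first half of your proof has a gap.

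Second, the non-collapsing: your collar case is fine, but the interior case is circular. To apply volume continuity you assert that the pointed Gromov--Hausdorff limit is ``smooth if $\epsilon_\infty>0$ and carries at worst the isolated singularity $o\in K_D^{\times}$ if $\epsilon_\infty=0$, hence non-collapsed of dimension $2n$.'' Identifying the limit near $D$ is precisely the content of the proposition following this lemma (that $Z$ coincides with the metric completion of $(\olsi{\mathcal U}\setminus D,\omega)$ and is homeomorphic to $\pi(\ols{\mathcal U})$), whose proof uses the non-collapsing of the present lemma as input; the interior estimates \eqref{eq-local higher estimate} control the metrics only away from $D$ and do not by themselves exclude that the region near $D$ collapses or degenerates in the limit. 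Without ruling that out, Cheeger--Colding volume convergence yields no contradiction: a collapsed limit has vanishing $2n$-dimensional measure, consistent with $\operatorname{Vol}(B(x_i,c_0))\to 0$. The elementary fix, which is what the paper's one-line appeal to Bishop--Gromov amounts to, is: for $r<\min\{1,\operatorname{dist}(x,\partial\mathcal U)\}$ set $R=\operatorname{dist}(x,\partial\mathcal U)\le C$; every point of $B(x,R)$ is joined to $x$ by a minimizing geodesic staying in $\mathcal U$, so monotonicity gives $\operatorname{Vol}(B(x,r))\,r^{-2n}\ge \operatorname{Vol}(B(x,R))\,R^{-2n}\ge C^{-2n}\operatorname{Vol}(B(x,R))$; and $B(x,R)$ reaches the boundary collar, since a point $y$ on a minimizing geodesic from $x$ to a nearest boundary point with $d_{\omega_\epsilon}(x,y)=R-s$ satisfies $d_{\omega_\epsilon}(y,\partial\mathcal U)\le s$, hence for $s$ a fixed small constant $y$ lies in the collar where $\omega_\epsilon$ is uniformly equivalent to a fixed metric, giving $\operatorname{Vol}(B(y,s))\ge v_0>0$ and $B(y,s)\subset B(x,R)$. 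This produces the uniform $\kappa$ with no compactness theory and avoids the circularity.
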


\begin{proof}
Suppose, on the contrary, that
 \begin{equation*}
 	\sup_{x\in \mathcal U}d_{\omega_{\epsilon}}(p,x)\rightarrow \infty \text{ as } \epsilon\rightarrow 0.
 \end{equation*}
 As we have uniform estimate for $\omega_{\epsilon}$ away from $D$, we know that $d_{\omega_\epsilon}(p, \partial \mathcal U)\rightarrow \infty$ as $\epsilon\rightarrow 0$. Note that we have  $\operatorname{Ric}(\omega_{\epsilon})=0$ and 
	$\int_{\mathcal U} \omega_{\epsilon}^n=\int_{\mathcal U}\ii^{n^2} \Omega\wedge\ols{
\Omega}$ has a uniform upper bound. This contradicts with a theorem of Yau \cite[Theorem I.4.1]{SY}. The second statement follows from the Bishop-Gromov volume comparison.
\end{proof}

Passing to a subsequence, we may assume $\omega_{\epsilon}$ converges in $C^{\infty}_{loc}(\olsi{\mathcal U}\setminus D)$ to a K\"ahler form $\omega$.
Moreover we may assume $\left(\ols{\mathcal U},p,\omega_{\epsilon}\right)$ converges in the pointed Gromov-Hausdorff sense to $(Z,p,d_Z)$. Then a similar argument as in \cite[Section 5]{SZ2023} implies the following. See also \cite{Song}.

\begin{proposition}
 $Z$ coincides with the metric completion of $(\olsi{\mathcal U}\setminus D,\omega)$ and naturally homeomorphic to $\pi(\ols{\mathcal U})$.
\end{proposition}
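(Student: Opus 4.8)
The plan is to identify $Z$ with the metric completion of $(\overline{\mathcal U}\setminus D,\omega)$ and then with $\pi(\overline{\mathcal U})$, by combining the local smooth convergence $\omega_\epsilon\to\omega$ on $\overline{\mathcal U}\setminus D$ with the uniform estimates near $D$. First I would set up the candidate map: since $\omega_\epsilon\to\omega$ in $C^\infty_{loc}(\overline{\mathcal U}\setminus D)$ and each $(\overline{\mathcal U},\omega_\epsilon)$ has uniformly bounded diameter by Lemma \ref{lemma--diameter bound}, the limit metric $\omega$ on $\overline{\mathcal U}\setminus D$ has finite diameter, so its metric completion $\widehat{\mathcal U}$ is a compact length space. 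I would then construct a natural $1$-Lipschitz (in fact distance-preserving on the regular part) identification of $\widehat{\mathcal U}$ with $(Z,d_Z)$ by comparing Cauchy sequences: a point in $Z$ is a GH-limit of points $x_\epsilon\in\overline{\mathcal U}$, and using the uniform upper volume bound $\int_{\mathcal U}\omega_\epsilon^n=\int_{\mathcal U}\mathbf i^{n^2}\Omega\wedge\overline\Omega$ together with the $C^\infty_{loc}$ convergence away from $D$, one shows that distances between points staying a fixed distance away from $D$ converge to the $\omega$-distances, exactly as in \cite[Section 5]{SZ2023}.

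The key analytic input, and the step I expect to be the main obstacle, is controlling what happens near $D$: one must show that no volume or metric collapses or concentrates along $D$ in a way that would make $Z$ larger than $\pi(\overline{\mathcal U})$ or change its topology. Concretely, I would prove that for any $\eta>0$ the set $\{\psi<\eta\}$ (a neighborhood of $D$) has $\omega_\epsilon$-diameter bounded by a function of $\eta$ tending to $0$ as $\eta\to 0$, uniformly in $\epsilon$; this uses the Monge–Ampère equation $\omega_\epsilon^n=\mathbf i^{n^2}\Omega\wedge\overline\Omega$ (whose right-hand side is a fixed finite measure, integrable across $D$ because $K_D^\times$ has log terminal, indeed canonical, singularity at $o$), the Chern–Lu comparison $C^{-1}\omega_0\le\omega_\epsilon\le C\tfrac{\mathbf i^{n^2}\Omega\wedge\overline\Omega}{\omega_0^n}\omega_0$, and a cutoff/capacity argument. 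Equivalently, I would show the map $\pi:\overline{\mathcal U}\to\pi(\overline{\mathcal U})\subset K_D^\times$ contracts $D$ to the single point $o$ and that in the limit metric $d_Z$ the image of $D$ is a single point; the uniform diameter bound forces the limit of $D$ under the GH convergence to have zero diameter.

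Having established that $\pi$ contracts $D$ and is an isometry on the complement (in the limit), I would assemble the homeomorphism: the continuous surjection $\overline{\mathcal U}\to Z$ factoring through $\pi$ descends to a continuous bijection $\pi(\overline{\mathcal U})\to Z$ between compact Hausdorff spaces, hence a homeomorphism, and simultaneously identifies $Z$ with the metric completion of $(\overline{\mathcal U}\setminus D,\omega)$ since both are obtained from $(\overline{\mathcal U}\setminus D,\omega)$ by adding a single limiting point (the one point over $o$). The remaining routine checks — that the added point is genuinely a single point and not a more complicated boundary piece, and that the length structures agree — follow verbatim from \cite[Section 5]{SZ2023} and \cite{Song}, so I would simply cite those for the details rather than reproduce the capacity estimates.
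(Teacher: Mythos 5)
Your plan is essentially the paper's: the paper itself gives no self-contained argument for this proposition but invokes the same two ingredients you do, namely $C^\infty_{loc}$ convergence of $\omega_\epsilon$ away from $D$ together with uniform control near $D$, and then defers the details to \cite[Section 5]{SZ2023} and \cite{Song}. You also correctly isolate the genuine crux, namely a uniform (in $\epsilon$) smallness of the $\omega_\epsilon$-diameter of the neighborhoods $\{\psi<\eta\}$ of $D$ as $\eta\to 0$, which is what makes $D$ collapse to the point over $o$ and identifies the completion with $\pi(\ols{\mathcal U})$; deferring its proof to the cited references is consistent with what the paper does.

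One clause in your write-up is a non sequitur and you should not lean on it: the statement that ``the uniform diameter bound forces the limit of $D$ under the GH convergence to have zero diameter.'' The global bound of Lemma \ref{lemma--diameter bound} bounds $\operatorname{diam}(\mathcal U,\omega_\epsilon)$ but says nothing about the extent of $D$ in the limit; a priori $D$ could converge to a set of positive diameter while the total diameter stays bounded. The collapse of $D$ must come from the neighborhood estimate you state first (proved, e.g., by bounding lengths of well-chosen families of paths using $\omega_\epsilon\leq Ce^{F}\omega_0$ with $e^F\in L^p$, $p>1$, as in the cited works), or from the point-separation arguments via H\"ormander-type constructions in \cite{SZ2023,Song}; it is an output of that analysis, not a consequence of the diameter bound alone. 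With that clause removed, your argument matches the intended one.
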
 In particular, we can find arbitrary small neighborhood of $p$ with strictly pseudoconvex boundary, H\"omander $L^2$ method applies and therefore one can extend Donaldson-Sun's 2-step degeneration theory to $(Z,p,d_Z)$. Then we get a $K$-semistable cone $W$ and its unique tangent cone $\ca$ at $p$.  Both $W$ and $\ca$ depends only on the local algebraic structure of $(K_D^{\times},o)$ by \cite{LX,LWX}. In particular, if we choose $D$ to be a smooth Fano manifold which does not admit a K\"ahler-Einstein metric, but admit a degeneration to a smooth Fano manifold $D'$ that admits a K\"ahler-Einstein metric. Such examples exist \cite{Don2007note,Tian1997}. Then we know that $$W=K_D^\times \neq  K_{D'}^\times=\ca.$$ More precisely, as $D$ degenerates to $D'$, $K_D^{\times}$ admits an equivariant degeneration to $K_{D'}^{\times}$, which admits a Calabi-Yau cone metric. Therefore $K_D^{\times}$ with the standard Reeb vector field is K-semistable. Moreover as $D$ does not admit K\"ahler-Einstein metrics, $K_D$ (with the standard Reeb vector field) is not K-polystable.  Therefore the result in Section \ref{polynomial convergence imply} in this paper implies that the singular Calabi-Yau metric near $p$ is not polynomially close to its tangent cone $\ca$.  

\section{Local non-positive K\"ahler-Einstein metrics}\label{Section--local}
Let $(X,p)$ be a germ of an isolated log terminal algebraic singularity. There is a well developed theory for local stability of (Kawamata) log terminal singularities; see \cite{LL,LX,LWX,Zhuang2023} and the references therein. Roughly speaking, this local stability theory says that every (Kawamata) log terminal  singularity has a two-step  degeneration to a K-polystable Fano cone.
The first degeneration is canonical and is induced by the unique (up to scaling) valuation that minimizes the normalized volume. Through this degeneration, we get a K-semistable Fano cone $(W,\xi_W)$. Secondly $(W,\xi_{W})$ admits an equivariant degeneration to a K-polystable Fano cone $(\ca,\xi)$. In the following, we will call this process as the 2-step stable degeneration of a log terminal (algebraic) singularity.

In this paper, we consider the special case that $W$ is isomorphic to $\ca$ as Fano cones. For simplicity, we will denote this by $\ca=W$. We further assume that $\ca$ has only an isolated singularity.

We have the following setup as similar to that Section \ref{sec--ds theory}.  $(\ca,\xi)$ admits a Calabi-Yau cone structure, that is there exists a K\"ahler Ricci-flat cone metric $\omega_{\ca}$ on $\ca$ with $\xi=J_{\ca}(r\partial _r)$. There exist embeddings $(\ca,o)\subset (\mathbb C^N,0)$ and $(X,p)\subset (\mathbb C^N,0)$ such that under the embedding, $\xi=J_{\ca}(r\partial _r)$ extends to a linear vector field on $\C^N$ of the form $Re(\sqrt{-1}\sum_i d_i z_i\partial_{z_i})$, which we also denote by $\xi$, where $d_i\in \mathbb R_{>0}$ for all $i$. Moreover the weighted tangent cone of $(X,p)$ with respect to $\xi$ coincides with $\ca$.

Assume we have a singular non-positive K\"ahler-Einstein metric with bounded potential on the germ $(X,p)$.  This means that there exists neighborhood $U$ of $p\in X$ and a function $$\varphi_{KE}\in C^{\infty}(U\setminus \{p\})\cap L^{\infty}(U)$$ such that the K\"ahler form $\omega_{KE}=\ii\partial\pp \varphi_{KE}$ defined on $U\setminus \{p\}$ satisfies 
\begin{equation}\label{eq--initial Kahler-Eisntien metric}
	\operatorname{Ric}(\omega_{KE})=\lambda \omega_{KE}
\end{equation}for some constant $\lambda\leq 0$. By rescaling we may assume $\lambda \in \{-1,0\}$. Since we are only caring about the behavior of $\omega_{KE}$ near $p$, in the following, we will shrink $U$ if necessary without explicit mentioning this.
Let us repeat the statement of Theorem \ref{Theorem--local} here.
\begin{theorem}
Suppose $\ca=W$ and it has only an isolated singularity, then every singular non-positive K\"ahler-Einstein metric with bounded potential on the germ $(X,p)$ is conical at $p$.
\end{theorem}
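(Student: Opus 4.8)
The strategy follows Hein-Sun's approach, but with the germ $(X,p)$ no longer assumed biholomorphic to $(\ca,o)$; instead we only know $\ca=W$. The plan is to produce, via a rescaling-and-limiting argument, a singular Calabi-Yau metric on the germ $(X,p)$ itself (the tangent cone at $p$ of $\omega_{KE}$ upgraded to a genuine metric near $p$), and then to invoke the results of Section \ref{imply polynomial convergence} --- precisely the machinery developed there does not require the metric to be a Gromov-Hausdorff limit of smooth \emph{polarized} K\"ahler-Einstein metrics, only that Donaldson-Sun's two-step degeneration theory applies and that the relevant H\"ormander $L^2$-technique is available near $p$. First I would set up the rescalings $(X, p, \lambda_i^2 \omega_{KE})$ for a sequence $\lambda_i \to \infty$ adapted to the valuation defining $W$ (equivalently, to the weight $\xi$), using the boundedness of $\varphi_{KE}$ to control the Monge-Amp\`ere densities: since $\lambda \le 0$ and the potential is bounded, the rescaled metrics have Ricci curvature $\lambda_i^{-2}\lambda\,\omega \to 0$ and uniformly bounded local volume (non-collapsing) by comparison with the cone volume of $\ca$. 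The key input here is Chen-Wang's compactness and partial regularity theory (cited in the introduction), which applies to K\"ahler-Einstein metrics with such mild singularities and guarantees that a subsequence converges in the pointed Gromov-Hausdorff sense to a metric space $(Z_\infty, p_\infty, d_\infty)$ which is a normal analytic space carrying a singular Calabi-Yau metric, with an isolated singularity whose tangent cone has a smooth link (here one uses that $\ca$, the algebraic tangent cone of $(X,p)$, has a smooth link by the isolated-singularity hypothesis and the K-polystability giving the Calabi-Yau cone structure).

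The second, algebraic, step is to identify the underlying complex germ of this limit. Here is where Li-Wang-Xu's result enters: the two-step stable degeneration of $(X,p)$ is canonical and depends only on the algebra of $\mathcal O_{X,p}$, so the intermediate K-semistable cone of the limit $Z_\infty$ and its tangent cone both agree with those of $(X,p)$, namely $W$ and $\ca$. Since by hypothesis $\ca=W$ for $(X,p)$, the same holds for $Z_\infty$: its intermediate cone equals its tangent cone. Moreover --- and this is the crucial point making the limit useful --- the rescaling was chosen so that $Z_\infty$ is biholomorphic, near $p_\infty$, to the weighted tangent cone of $(X,p)$ with respect to $\xi$, which is $\ca$ itself; in other words the limit germ \emph{is} $(\ca, o)$, and it carries a singular Calabi-Yau metric asymptotic to the Calabi-Yau \emph{cone} metric. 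At this stage one applies the Hein-Sun theorem (Theorem \ref{thm--HS-harmonic functions} is the analytic engine, but the conclusion I need is \cite[main theorem]{HS}, restated in the introduction): a singular Calabi-Yau metric on a germ biholomorphic to a Calabi-Yau cone with smooth link is polynomially close to that cone, and in fact the identification can be taken biholomorphic. So $\omega_\infty$ on $Z_\infty \cong \ca$ is conical at $p_\infty$.

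The third step transfers this back to $\omega_{KE}$ on $(X,p)$. Since $\ca=W$ for $(X,p)$ and $\ca$ has a smooth link, the hypotheses of the construction in Section \ref{imply polynomial convergence} are met with $\omega_Z$ there replaced by $\omega_{KE}$: Lemma \ref{lemm--W is the same} lets us normalize the Donaldson-Sun embeddings so that $W_j = \ca$ for all $j$; Lemma \ref{lemma: complex structure close} produces a diffeomorphism $\Phi$ from a neighborhood of $o\in\ca$ to a neighborhood of $p\in X$ with $\Phi^*J_X$ polynomially close to $J_\ca$; Lemma \ref{lemma--potential control} (using $\omega_{KE}$ K\"ahler-Einstein with bounded potential and the uniqueness of the tangent cone, which now holds because $\ca$ is the unique tangent cone by the smooth-link hypothesis and Chen-Wang's regularity) produces an almost-quadratic potential, hence an almost-K\"ahler-Einstein metric $(\tilde\omega, J_\ca, \tilde g)$ polynomially close to $\Phi^*\omega_{KE}$; Proposition \ref{prop--solve ma to get calabi-yau} solves a local complex Monge-Amp\`ere equation via the Banach fixed point theorem (using Proposition \ref{eq--right inverse}) to produce a genuine local Calabi-Yau metric $\bar\omega$ near $o\in\ca$ still polynomially close to $\Phi^*\omega_{KE}$; and finally Chiu-Sz\'ekelyhidi's Theorem \ref{polynomail close of the intermediate calabi-Yau metric} --- whose two structural hypotheses (geodesic convexity of the regular part, applicability of Donaldson-Sun theory) hold for $(\bar\omega, J_\ca, \bar g)$ by construction --- gives polynomial closeness of $\bar\omega$ to the Calabi-Yau cone metric $\omega_\ca$. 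Chaining these estimates yields \eqref{eq_polyclose} for $\omega_{KE}$, i.e. conical at $p$.

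\textbf{Main obstacle.} Steps one and three are, modulo the cited black boxes, essentially the same as in the already-developed theory, so the genuinely delicate part is step two: extracting from the bounded-potential hypothesis enough uniform control (non-collapsing, Ricci bounds after rescaling, and especially that the \emph{pointed} limit of the rescalings is the weighted tangent cone $\ca$ rather than some other degeneration of $(X,p)$) to legitimately invoke Chen-Wang's compactness and then Li-Wang-Xu's algebraicity of the two-step degeneration. In particular one must verify that the limit $Z_\infty$ is \emph{normal} with an isolated singularity and that its singular Calabi-Yau metric has bounded potential in the sense required by Hein-Sun, which is where the interplay between the analytic limit and the algebraic stable degeneration is most subtle; the isolated-singularity assumption on $\ca$ is used precisely to ensure the limit germ inherits an isolated singularity with smooth link so that Hein-Sun applies verbatim. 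A secondary technical point is checking that Lemma \ref{lemma--potential control} and the $L^2$-method near $p$ go through for $\omega_{KE}$ with $\lambda \le 0$ (rather than $\lambda$ arising from a Gromov-Hausdorff limit as in Section \ref{imply polynomial convergence}), but this only requires the positivity \eqref{eq--positivity of psh function}-type estimate for the auxiliary plurisubharmonic weight, which is insensitive to the sign of $\lambda$ once the tangent cone is fixed.
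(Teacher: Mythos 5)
There is a genuine gap, and it sits exactly where you flag the ``main obstacle'': both your step one and your step three presuppose metric control on $\omega_{KE}$ near $p$ that is simply not available when all you know is that the potential is bounded. The hypothesis $\ca=W$ in this theorem is the \emph{algebraic} two-step stable degeneration of the germ $(X,p)$ (Li--Xu, Li--Wang--Xu); it says nothing a priori about the metric behaviour of $\omega_{KE}$. The machinery of Section \ref{imply polynomial convergence} cannot be run ``with $\omega_Z$ replaced by $\omega_{KE}$'': it starts from \eqref{eq:weak estimates}, i.e.\ the existence and uniqueness of the \emph{metric} tangent cone of $\omega_Z$ at $p$ and $o(1)$-closeness to it, and from the applicability of Donaldson--Sun theory to $\omega_Z$ (non-collapsing, structure of the metric completion). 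For a pluripotential-theoretic $\omega_{KE}$ none of this is known in advance --- it is essentially the content of the theorem. Likewise your step one is circular: the non-collapsing of the rescalings $(X,p,\lambda_i^2\omega_{KE})$ ``by comparison with the cone volume of $\ca$'' presupposes that the metric is already close to the cone, and Chen--Wang compactness is invoked for a family about which no curvature, volume, or structural information has been established; the identification of the limit germ with $(\ca,o)$ is likewise asserted rather than derived. Your step three, if it worked, would also render steps one and two superfluous, which is a sign that the argument has not located where the real work happens.

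The paper resolves this by a continuity method in the spirit of Hein--Sun, which you do not have. One first builds by hand an initial metric $\omega_0=\ii\partial\pp\varphi_0$ on $U\setminus\{p\}$ that is Ricci-flat near $p$, conical at $p$, and matches $\varphi_{KE}$ on $\partial U$ (Lemma \ref{lemma-initial metric}, via Lemma \ref{lemma: complex structure close}, a local Monge--Amp\`ere solution as in Section \ref{imply polynomial convergence}, and an Arezzo--Spotti type cut-off/gluing). One then runs the Dirichlet path \eqref{eq--continuity path} from $\omega_0$ to the Einstein equation and shows the set $I$ of parameters at which $\omega_t$ is conical is open and closed: openness is a weighted implicit function theorem whose linearization is inverted in Appendix 1, and it is precisely the conicality at the parameter $T$ that supplies the Banach-space setup; closedness uses the uniform $L^\infty$ and local higher-order estimates, the uniform diameter bound of Appendix 2, Chen--Wang compactness applied to the metrics along the path (legitimate there because they are already conical), Li--Wang--Xu to identify the tangent cone of the limit with $\ca$, and only then the Section \ref{imply polynomial convergence} argument applied to the limit metric. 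Finally the endpoint is identified with $\omega_{KE}$ by the uniqueness theorem for the Dirichlet problem \cite[Theorem 1.4]{GGZ}. Your proposal correctly names several of these ingredients (Chen--Wang, Li--Wang--Xu, Hein--Sun, the Section \ref{imply polynomial convergence} machinery) but assembles them without the deformation argument that makes their hypotheses verifiable, so as written the proof does not go through.
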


 The same proof as in Section \ref{polynomial convergence imply} implies that the isomorphism between $\ca$ and $W$ as Fano cones is also a necessary condition for the existence of a conical singular K\"ahler-Einstein metric. To show its sufficiency, we will closely follow Hein-Sun's continuity argument. For the openness, we need to deal with the issue that $(X,p)$ is not biholomorphic to $\ca$. For the closeness, we need Theorem \ref{thm-main theorem} in this paper and results \cite{CW, LX, LWX}, which allow us to remove the smoothability assumption in Hein-Sun's work.

We start with the following result, which gives the initial metric we start with.

\begin{lemma}\label{lemma-initial metric}
	On $U\setminus \{p\}$, there exists a K\"ahler form $\omega_0=\ii\partial\pp \varphi_0$, which is Ricci-flat near $p$ and polynomially close to the Calabi-Yau cone $\ca$ and 
\begin{equation*}
	\varphi_0|_{\partial U}=\varphi_{KE}|_{\partial U}. \end{equation*}	
	
\end{lemma}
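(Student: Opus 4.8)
The plan is to build $\omega_0$ by starting from the Calabi-Yau cone model near $p$, gluing it to $\omega_{KE}$ away from $p$, and then correcting the resulting form so that it is exactly Ricci-flat in a small punctured neighborhood of $p$ while keeping the boundary values $\varphi_0|_{\partial U}=\varphi_{KE}|_{\partial U}$. First I would use the embedding $(X,p)\subset(\mathbb C^N,0)$ together with the fact that the weighted tangent cone of $(X,p)$ with respect to $\xi$ is $\ca$ to produce, exactly as in Lemma~\ref{lemma: complex structure close}, a diffeomorphism $\Phi$ from a neighborhood of $o\in\ca$ onto a neighborhood of $p\in X$ with $|\nabla^k_{\omega_\xi}(\Phi^*J_X-J_\ca)|_{\omega_\xi}=O(r_\xi^{\delta_0-k})$. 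Here one key point is that the argument of Lemma~\ref{lemma: complex structure close} only used that $X$ is a normal isolated singularity whose weighted tangent cone along $\xi$ is $\ca$, which is exactly our hypothesis $\ca=W$ with $\ca$ having an isolated singularity; so $\Phi$ transplants the Calabi-Yau cone potential $\tfrac14 r_\xi^2$ (or rather the genuine cone potential $\tfrac14 r^2$ of $\omega_\ca$) to a strictly plurisubharmonic function $\rho$ on a punctured neighborhood of $p$ whose $\ii\partial_{J_X}\pp_{J_X}\rho$ is polynomially close to $\omega_\ca$ in the sense of \eqref{eq:weak estimates}.

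Next I would upgrade $\ii\partial\pp\rho$ to an \emph{exactly} Ricci-flat metric near $p$. Since $\ca$ is log terminal with isolated singularity, there is a nowhere-vanishing section $\Omega$ of $mK_X$ near $p$ with $\mathcal L_\xi\Omega=\ii\lambda'\Omega$ (the analogue of \eqref{eq-lie derivative}), and $(\ii^{n^2}\Omega\wedge\ols\Omega)^{1/m}/(\ii\partial\pp\rho)^n$ is a positive function that is polynomially close to $1$ by the cone equation \eqref{eq-equation on the cone}. One then solves the local complex Monge-Amp\`ere equation
\begin{equation*}
(\ii\partial\pp\rho+\ii\partial\pp v)^n=(\ii^{n^2}\Omega\wedge\ols\Omega)^{1/m}
\end{equation*}
on a small ball $B_s\subset X$ by exactly the Banach fixed-point scheme of Proposition~\ref{prop--solve ma to get calabi-yau} (weighted H\"older spaces $C^{k,\alpha}_{2+\delta_2}$ with respect to a cone metric, right inverse of the Laplacian from Proposition~\ref{eq--right inverse}, quadratic remainder estimate). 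This produces, on a small punctured neighborhood $U'\setminus\{p\}$, a genuine Ricci-flat K\"ahler form $\omega_0'=\ii\partial\pp\varphi_0'$ that is polynomially close to the cone $\ca$, with $|\nabla^k_{g_\ca}(\varphi_0'-\tfrac14 r^2)|=O(r^{2+\delta_2-k})$.

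Finally I would correct the boundary values by a cutoff interpolation on an annular region $U\setminus U'$, away from $p$. On $X\setminus\{p\}$ the metric $\omega_{KE}$ is smooth, so there is no regularity issue there: choose a cutoff $\chi$ equal to $1$ near $p$ and $0$ near $\partial U$, and set $\varphi_0=\chi\varphi_0'+(1-\chi)\varphi_{KE}+(\text{lower-order correction})$; since we only need $\omega_0=\ii\partial\pp\varphi_0$ to be a positive $(1,1)$-form on all of $U\setminus\{p\}$, Ricci-flat near $p$, polynomially close to $\ca$, and with $\varphi_0|_{\partial U}=\varphi_{KE}|_{\partial U}$ — but not Ricci-flat on the annulus — this is achieved by the standard trick of adding a large multiple of a fixed K\"ahler potential on the annulus to dominate the negative part of $\ii\partial\pp(\chi(\varphi_0'-\varphi_{KE}))$, exactly as $A(\psi-\delta)$ was used in Section~\ref{sec-examples}; one may need to further shrink $U$ and rescale so that the fixed background form is positive enough. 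The boundary condition is arranged by subtracting from $\varphi_0$ the pluriharmonic (indeed, we just need harmonic-on-the-boundary-data) extension of $(\varphi_0-\varphi_{KE})|_{\partial U}$, or more simply by noting $\chi\equiv 0$ near $\partial U$ already forces $\varphi_0=\varphi_{KE}$ there. The main obstacle, and the step deserving the most care, is the second one: verifying that the hypotheses of Proposition~\ref{prop--solve ma to get calabi-yau} genuinely apply here — i.e. that $\Phi^*\omega_X$-type estimates from the uniqueness-of-tangent-cone input \eqref{eq:weak estimates} are available \emph{before} we have produced the conical metric (this is where Theorem~\ref{thm-main theorem} together with \cite{CW,LX,LWX} enters, as flagged in the text), and that the approximate solution $\rho$ has small enough Monge-Amp\`ere error in the weighted norm to run the fixed-point argument.
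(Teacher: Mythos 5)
Your three steps — construct $\Phi$ via Lemma \ref{lemma: complex structure close}, transport the cone potential $\tfrac14 r^2$ through $\Phi$ and run the Monge--Amp\`ere fixed-point scheme of Proposition \ref{prop--solve ma to get calabi-yau} to get a conical Calabi--Yau metric $\tilde\omega_0=\ii\partial\pp\tilde\varphi_0$ near $p$, then glue to $\varphi_{KE}$ on an annulus — are exactly the paper's plan, so the approach is the same. The one place your sketch is genuinely looser is the gluing: a convex combination $\chi\varphi_0'+(1-\chi)\varphi_{KE}$ plus a cutoff-supported additive correction will not, taken literally, be made positive, because the correction's own $\ii\partial\pp$ has negative contributions from cutoff derivatives at the edges of its support; what is needed is the Richberg/Arezzo--Spotti regularized-maximum formula the paper uses, $\varphi_0=\epsilon_2\chi_2(r)\tilde\varphi_0+\chi_1\big(\varphi_{KE}+A(r^2-\delta_1^2)\big)$ with $\chi_1$ convex, constant for small argument and the identity for large argument, so that near $p$ the $\varphi_{KE}$-dependence is killed entirely (giving $\omega_0=\epsilon_2\tilde\omega_0$ there, hence Ricci-flatness), the boundary value is $\varphi_{KE}$ on $\partial U$, and positivity holds throughout by convexity of $\chi_1$ and smallness of $\epsilon_2$. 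Also, the concern you flag at the end — that \eqref{eq:weak estimates} or Theorem \ref{thm-main theorem} might be needed as input before the fixed-point step applies — is not an issue for this lemma: the approximate-cone data required by Proposition \ref{prop--solve ma to get calabi-yau} comes entirely from the algebraic diffeomorphism $\Phi$ of Lemma \ref{lemma: complex structure close}, not from any pre-existing conical metric; the references \cite{CW,LX,LWX} and Theorem \ref{thm-main theorem} are used only later, in the closedness step of the continuity method.
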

\begin{proof}
	 Using the diffeomorphism $\Phi$ obtained using Lemma \ref{lemma: complex structure close} to pull back K\"ahler potantials and solving Monge-Amp\`ere equations as we did in Section \ref{imply polynomial convergence} , we can prove that in a neighborhood of $p$, there exists a conical Calabi-Yau metric $\tilde \omega_0=\ii\partial \pp \tilde\varphi_0.$ As before, we view $r$ as a smooth function in a neighborhood of $p$ via the diffeomorphism. Fix a small positive constant $\delta_1$ such that both $\omega_{KE}$ and $\tilde \omega_0$ are well defined in a neighborhood of $\{r\leq \delta_1\}$ and $r^2$ is strictly plurisubharmonic on it. Then we are going to use a scaling and cut-off argument similar to that in \cite{ArezzoSpotti} to obtain the desired K\"ahler form $\omega_0$. Without loss of generality, we may assume $ \min_{\{r=\delta_1\}} \varphi_{KE}=10$. As $\varphi_{KE}$ is bounded, then we can choose $A$ sufficiently large such that 
	 $\varphi_1:=\varphi_{KE}+A(r^2-\delta_1^2)\leq 0$ on the region $\{r\leq \frac{\delta_1}{2}\}$. Then choose $\epsilon_1\ll \delta_1$ such that $\varphi_1\geq 2$ on $\{\delta_1-\epsilon_1\leq r\leq \delta_1\}$. Then fix two cut-off function $\chi_1$ and $\chi_2$ which are smooth non-negative function on $\mathbb R$ such that \begin{itemize}
	 	\item $\chi_1(t)$ equals a constant for $t\leq \frac{1}{2}$ and $\chi_1(t)\equiv t$ for $t\geq \frac{3}{2}$ and $\chi_1',\chi_1''\geq 0$
	 	\item $\chi_2(t)\equiv 1$ for $t\leq \delta_1-\epsilon_1$ and $\chi_2(t)\equiv 0$ for $t\geq \delta_1-\frac{\epsilon_1}{2}$.
	 \end{itemize} Then one can directly show that for $\epsilon_2$ sufficiently small, 
	 \begin{equation*}
	 	\omega_0:=\ii\partial\pp(\epsilon_2 \chi_2(r)\tilde\varphi_0+\chi_1(\varphi_1)).
	 \end{equation*}is a K\"ahler form and  satisfies the required properties when we choose $U=\{r\leq \delta_1\}$. 
\end{proof}

%As $\omega_0$ is Calabi-Yau in a neighborhood of $p$, we know that  there exists a smooth function $f_0$, which is pluriharmonic in a neighborhood of $p$ such that
%\begin{equation}\label{eq--initial equation}
%	(\ii\partial\pp \varphi_0)^n=e^{f_0}(c_n\Omega\wedge\ols{\Omega})^{\frac{1}{m}} .
%\end{equation}

Since $\omega_{KE}=\ii\partial\pp \varphi_{KE}$ is K\"ahler-Einstein \eqref{eq--initial Kahler-Eisntien metric} and $\omega_0$ is Ricci-flat near $p$, there exists a smooth function $f_1$, which is pluriharmonic in a neighborhood of $p$ such that 
\begin{equation}
	(\ii\partial\pp \varphi_{KE})^n=e^{-\lambda \varphi_{KE}+ f_1}\omega_0^n,
\end{equation}where $\lambda\in \{-1,0\}$.
Then consider the following continuous path: we are looking for functions $\varphi_t\in C^{\infty}(U\setminus p)\cap L^{\infty}(U)$ satisfying
 \begin{equation}\label{eq--continuity path}
 	\begin{cases}
		(\ii\partial\pp \varphi_t)^n=\exp(-\lambda t(\varphi_t+f_1))\omega_0^n  \\
		\varphi_t|_{\partial U}=\varphi_{KE}.
\end{cases}
 \end{equation}
 
 Let $\omega_t=\ii\partial\pp\varphi_t$. Let $I$ be the subset of $[0,1]$ such that $\omega_t$ is polynomially close to $\ca$ near $p$. Then by Lemma \ref{lemma-initial metric}, we have $0\in I$. We are going to show that $I$ is both open and closed, and therefore $I=[0,1]$. Assuming this, then by the uniqueness result \cite[Theorem 1.4]{GGZ}, we obtain $\varphi_1=\varphi_{KE}$ and therefore $\omega_{KE}$ is polynomially close to $\ca$ near $p$.

\subsection{Openness}\label{sec--openness} We first show that the set 
$I$ is open. This is analogous to \cite[Theorem 2.19]{HS}.
\begin{theorem}
	Let $\omega_{T}=\ii\partial\pp\varphi_{T}$ be a solution to \eqref{eq--continuity path} for some $T\in [0,1]$. Suppose $\omega_{T}$ is polynomially close to $\ca$, then there exists  $\tau>0$ such that for all $t\in (T-\tau,T+\tau )$, \eqref{eq--continuity path} admits a solution $\omega_{t}=\ii\partial\pp\varphi_{t}$, which is also polynomially close to $\ca$.
\end{theorem}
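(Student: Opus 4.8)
The plan is to run Hein--Sun's openness argument \cite[Theorem 2.19]{HS} in weighted H\"older spaces attached to the model cone, the one new ingredient being that $(X,p)$ is only \emph{diffeomorphic}, not biholomorphic, to $(\ca,o)$: we transport everything to the cone via the diffeomorphism $\Phi$ of Lemma \ref{lemma: complex structure close}, so that all relevant operators become polynomially-small perturbations of the model ones. If $\lambda=0$ the path \eqref{eq--continuity path} is independent of $t$, hence $I=[0,1]$ trivially; so assume $\lambda=-1$. Transporting a punctured neighbourhood of $p$ to a punctured ball $B_{s_0}\setminus\{o\}\subset\ca$ via $\Phi$, the hypothesis that $\omega_T$ is polynomially close to $\ca$ says the pulled-back metric $g_T$ satisfies \eqref{eq--new riemannian metric}, while $|\nabla^k_{g_\ca}(\Phi^*J_X-J_\ca)|=O(r^{\delta_0-k})$ for all $k$. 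Writing $\varphi_t=\varphi_T+u$ and dividing \eqref{eq--continuity path} at parameter $t$ by the same equation at $T$ (using $\omega_T^n=e^{T(\varphi_T+f_1)}\omega_0^n$) turns the problem into
\begin{equation*}
F(u,t):=\log\frac{(\omega_T+\ii\partial_{\Phi^*J_X}\pp_{\Phi^*J_X}u)^n}{\omega_T^n}-t\,u-(t-T)(\varphi_T+f_1)=0,\qquad u|_{\partial U}=0,
\end{equation*}
with $F(0,T)=0$. Here $\varphi_T+f_1$ is bounded near $p$ ($f_1$ is pluriharmonic there and $\varphi_T$ is quadratic to leading order), and its only non-decaying contribution is the constant $\varphi_T(p)+f_1(p)$, which vanishes when $T>0$ because $\omega_T^n/\omega_0^n\to1$ at $p$ --- this is where polynomial closeness of $\omega_T$ to $\ca$, as opposed to mere Gromov--Hausdorff closeness, is used --- so that $\varphi_T+f_1=O(r)$ near $p$.

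The linearisation of $F$ at $(0,T)$ is $\mathcal L:=\Delta_{g_T}-T$, the Laplacian of $(\omega_T,\Phi^*J_X)$ minus the zeroth-order term $T$. Near $p$, written on the cone, $\mathcal L=\Delta_{g_\ca}+(g_T-g_\ca)*\nabla^2_{g_\ca}+\nabla_{g_\ca}g_T*\nabla_{g_\ca}+(\Phi^*J_X-J_\ca)*\nabla^2_{g_\ca}+\nabla_{g_\ca}(\Phi^*J_X)*\nabla_{g_\ca}-T$, and by \eqref{eq--new riemannian metric} together with the bound on $\Phi^*J_X-J_\ca$ every correction term is $\Psi$-small on a sufficiently small ball (the $-T$ term being lower order relative to $\Delta_{g_\ca}\sim r^{-2}$), so Proposition \ref{eq--right inverse}, applied with a weight $\beta\in(0,1)\setminus\Gamma$ exactly as stated, gives a right inverse of $\mathcal L$ near the vertex with norm independent of the ball. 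Patching this vertex parametrix with a solution operator for the Dirichlet problem of $\mathcal L$ on $U\setminus B_s$ --- which exists because $-T\le0$ gives $\mathcal L$ the sign required by the maximum principle --- shows that $\mathcal L$ is an isomorphism from the space of functions vanishing on $\partial U$ and lying in $C^{k+2,\alpha}_{\beta+2}$ near $p$, onto the space of functions lying in $C^{k,\alpha}_\beta$ near $p$; injectivity is the strong maximum principle applied to such $u$ (bounded, zero on $\partial U$, tending to $0$ at $p$). At $T=0$ one first subtracts off a solution of the Dirichlet problem for $\Delta_{g_0}$ to make the right-hand side decaying, which is harmless.

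With this isomorphism in hand, $F$ is a smooth map between the corresponding Banach spaces, $F(0,T)=0$, and $D_uF(0,T)=\mathcal L$ is invertible, so the implicit function theorem produces $\tau>0$ and, for $t\in(T-\tau,T+\tau)$, a unique small $u_t$ in the weighted space, with $u_T=0$ and depending smoothly on $t$. Then $\varphi_t:=\varphi_T+u_t$ solves \eqref{eq--continuity path} --- the Monge--Amp\`ere equation by construction, the boundary condition because $u_t|_{\partial U}=0$ and $\varphi_T|_{\partial U}=\varphi_{KE}$ --- and it is bounded, hence equals \emph{the} bounded solution by the uniqueness statement \cite[Theorem 1.4]{GGZ}. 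Finally $u_t$ and all its derivatives decay at $p$ faster than the cone potential, so $\omega_t=\omega_T+\ii\partial\pp u_t$ is again polynomially close to $\ca$, i.e. $(T-\tau,T+\tau)\subset I$, as claimed.

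I expect the main obstacle to be exactly the point flagged in the introduction: $X$ is not biholomorphic to $\ca$, so $F$ is built from the genuine complex Hessian of $J_X$, and one must verify \emph{at every step} --- the vertex right inverse, the global isomorphism obtained by patching, and the nonlinear estimates for the implicit function theorem --- that replacing $J_\ca$ by the polynomially-close $\Phi^*J_X$ perturbs the operators only by errors absorbed on small balls; this is precisely what Lemma \ref{lemma: complex structure close} makes possible. Secondary bookkeeping points are the non-decaying (constant) contributions of $f_1$ and $\varphi_T$ near $p$, handled as above using $\omega_T^n/\omega_0^n\to1$ and, at $T=0$, a preliminary Dirichlet solve, and the fact that the Dirichlet problem on $U$ a priori yields only a bounded solution, to be identified a posteriori with the polynomially-close one via \cite{GGZ}. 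Modulo these, the argument is a careful adaptation of \cite[Theorem 2.19]{HS}.
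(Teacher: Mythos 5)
Your linearisation step contains a genuine gap, and it is precisely the point where Hein--Sun's openness argument is most delicate. You claim that patching the vertex parametrix from Proposition~\ref{eq--right inverse} with a Dirichlet solution operator on $U\setminus B_s$ shows that $\mathcal L=\Delta_{g_T}-T$ is an isomorphism from $\{u\in C^{k+2,\alpha}_{2+\beta}\text{ near }p:\ u|_{\partial U}=0\}$ onto $C^{k,\alpha}_\beta$. But the map $\mathcal T_s$ of Proposition~\ref{eq--right inverse} does \emph{not} respect the Dirichlet condition: to make the patched solution vanish on $\partial U$ one must subtract a harmonic function with matching boundary value, and that harmonic function contains the Fourier modes $r^{\mu_i^+}\phi_i$ with $\mu_i^+\in(0,2]$ (generically nonempty: $\mu_1^+=1$ already for flat $\C^n$), which decay strictly more slowly than $r^{2+\beta}$ for $\beta\in(0,1)$. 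So the operator $\Delta_{g_\ca}\colon C^{k+2,\alpha}_{2+\beta,0}\to C^{k,\alpha}_\beta$ has index $-\dim V<0$, where $V=\operatorname{span}\{r^{\mu_i^+}\phi_i:\mu_i^+\in(0,2]\}$, and it is not surjective. This is exactly why the paper's domain is not a weighted H\"older space but the Banach manifold $\mathcal X=C^{2,\alpha}_{2+\beta,0}\oplus\chi\mathcal P\oplus\chi\cdot\frac{1}{2}(r^2\circ\phi-r^2)$ with $\phi$ ranging over transverse automorphisms of $\ca$: its tangent space $C^{2,\alpha}_{2+\beta,0}\oplus\chi\mathcal P\oplus\chi\frac{1}{2}r^2\oplus\chi\mathcal H$ exactly fills in the missing cokernel, as established in Appendix~1 via $\dim V=\dim\mathcal P+\dim\mathcal H+1$.

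A second, downstream consequence: because the solution $\psi_t$ produced by the implicit function theorem has a nonzero component $\chi\cdot\frac{1}{2}(r^2\circ\phi-r^2)$, the form $\ii\partial\pp\psi_t$ contains the $O(1)$ (non-decaying) piece $\frac{1}{2}(\phi^*\omega_\ca-\omega_\ca)$, so $\omega_t$ is \emph{not} polynomially close to $\omega_\ca$ via the original diffeomorphism $\Phi$. Instead one must verify polynomial closeness via the twisted diffeomorphism $\Phi\circ\phi^{-1}$, which is what the paper does by comparing the weighted analysis for $g_\ca$ and $\phi^*g_\ca$ and observing that they define equivalent Banach spaces. Your final sentence (``$u_t$ and all its derivatives decay at $p$ faster than the cone potential, so $\omega_t$ is again polynomially close'') silently assumes $u_t\in C^{k+2,\alpha}_{2+\beta}$ with no finite-dimensional pieces, which does not follow. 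The peripheral points you raise (that $\varphi_T(p)+f_1(p)=0$ when $T>0$, the maximum-principle injectivity, the a posteriori identification with the bounded solution via \cite[Theorem~1.4]{GGZ}) are correct, but the core linear step and the conclusion both need the Hein--Sun Banach manifold structure, not a straight weighted isomorphism.
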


\begin{proof}
	By assumption, there exists a $\delta>0$ and a diffeomorphism $\Phi$ from a neighborhood of $p$ in $U$ to a neighnorhood of $o\in \ca$ such that for all $k\geq 0$,
	 \begin{equation}\label{eq--conical at T}
  	|\nabla^k_{g_{\ca}}(\Phi^*J_X-J_{\ca})|+|\nabla^k_{g_{\ca}}(\Phi^*\omega_{T}-\omega_{\ca})|+|\nabla^k_{g_{\ca}}(\Phi^*g_{T}-g_{\ca})|=O(r^{\delta-k}).
  \end{equation}  
  
  Fix $\alpha \in (0,1)$ and a positive number $\beta < \min\{\delta,\mu_1^+,1\}$ such that $(2,2+\beta)\cap \{\mu_i^+\}=\emptyset$, where $\mu_i^+$ are given in \eqref{eq--exceptional roots}. Let $C_{2+\beta}^{2,\alpha}$ denote the weighted Banach space defined using the cone metric $(\Phi^{-1})^*g_\ca$. More precisely, this means the following. We fix a neighborhood $V\subset U$ of $p$, where $\Phi$ is defined and fix a cut-off function $\chi$ on $U$ which equals 1 in a neighborhood of $p$ and has support in $V$. Then a function $f\in C_{2+\beta}^{k+2,\alpha}$ means $\Phi^*(\chi f)\in C_{2+\beta}^{k+2,\alpha}$ which is defined by \eqref{eq--definition of banach space} using $g_\ca$ and $(1-\chi)f \in C^{2,\alpha}$ with respect to a fixed smooth metric on $U$. Let $C_{2+\beta,0}^{2,\alpha}$ denote the Banach subspace of $C_{2+\beta}^{k+2,\alpha}$, consisting of functions which have zero boundary value.
  
%  When there is no confusion, we will abuse notations also use $C_{2+\beta}^{2,\alpha}$ to denote the Banach space of functions defined in a neighborhood of $o$ using the metric $g_\ca$.
  
 Let $\mathcal{PH}$ denote the vector space of homogeneous $J_\ca$-pluriharmonic real-valued functions with growth rate in $(0,2]$. It is direct to show that these $J_\ca$-pluriharmonic functions are real part of $J_\ca$-holomoprhic functions \cite[Lemma 2.13]{HS}. As $J_X$ is polynomially close to $J_\ca$, using H\"ormander $L^2$ method as we did in Section \ref{polynomial convergence imply}, we can construct $J_X$-pluriharmonic functions with growth rate in $(0,2]$ that converges to homogeneous $J_\ca$-pluriharmonic functions. Let $\mathcal P$ denote a vector space spanned by $\dim \mathcal{PH}$-dimensional $J_X$-pluriharmonic functions with growth rate in $(0,2]$. $\mathcal P$ is not unique and any choice of such a space works for the following argument.    Let $\mathcal H$ denote the space of all
$\xi$-invariant 2-homogeneous $\Delta_{g_\ca}$-harmonic real-valued functions on $\ca$. Transverse automorphisms of the cone $\ca$, i.e., automorphisms of $\ca$ that preserve the Reeb vector field $\xi=J_\ca(r\partial r)$, form a complex Lie group. Let $G$ denote the connected component of this Lie group. Consider spaces of functions
  \begin{equation}
  	  \mathcal X=C_{2+\beta,0}^{2,\alpha}\oplus \chi \mathcal P\oplus \chi\cdot \frac{1}{2}\left(r^2 \circ \phi-r^2\right) \quad \text{and} \quad \mathcal Y=C_{\beta}^{0,\alpha},
  \end{equation}
 where $\phi\in G$ and using the diffeomorphism $\Phi$ we view $\chi\cdot \frac{1}{2}\left(r^2 \circ \phi-r^2\right)$ as functions in a neighborhood of $p$. It is shown in the proof of \cite[Theorem]{HS} that $\mathcal X$ admits a $C^1$ Banach manifold structure, and moreover, we have 
\begin{equation}
	T_0\mathcal X=C_{2+\beta,0}^{2,\alpha}\oplus \chi \mathcal P\oplus \chi\cdot \frac{1}{2}r^2 \oplus \chi \mathcal H.
\end{equation}

  Then there exists a neighborhood $\mathcal U$ of $(0,T)$ in $\mathcal X\times \mathbb R$ and a neighborhood $\mathcal V$ of $0$ in  $\mathcal Y$ such that the following map $\mathcal M:\mathcal U\rightarrow \mathcal V$ is well defined and is $C^1$: 
  \begin{equation}
  	\mathcal M(\varphi):=\log \frac{(\omega_T+\ii\partial\pp \varphi)}{\omega_0^n}+\lambda t(\varphi_T+\varphi+f_1).
  \end{equation}Indeed, we just need to show $f_1\in C^{0,\alpha}_\beta$. Since $f_1$ is $J_X$-pluriharmonic, in particular it is $\omega_T$-harmonic. Then by the gradient estimate, we know $|\nabla_{\omega_T}f_1|\leq C$ and hence $f_1\in C^{0,\alpha}_\beta$ by the polynomial closeness \eqref{eq--conical at T}.
  The linearization of $\mathcal M$ at $(0,T)$ along the first component is given by 
  \begin{equation*}
  	\Delta_{\omega_T}+\lambda T\operatorname{Id}: C_{2+\beta,0}^{2,\alpha}\oplus \chi \mathcal P\oplus \chi\cdot \frac{1}{2}r^2 \oplus \chi \mathcal H\rightarrow C_{\beta}^{0,\alpha}
  \end{equation*}This is invertible as shown in Appendix 1. The implicit function theorem implies that \eqref{eq--continuity path} has a solution   
  $\omega_t=\omega_T+\ii\partial\pp \psi_{t}$ with $\psi_t \in \mathcal{U}$ for some $\tau>0$ and all $t \in(T-\tau,T+\tau)$.

It remains to prove that $\omega_t$ is polynomially close to $\ca$ near $p$. By construction $\psi_t$ has three terms, that is $\psi_t=u_1+\chi u_2+\frac{1}{2}(r^2\circ \phi-r^2)$, where $\phi\in G$ is a transverse automorphisms of $\ca$. Then we are going to show that $\Phi\circ \phi^{-1}$ gives the desired diffeomorphism. Note that we have the following:
\begin{equation}\label{eq--want to show complex close}
	\nabla^k_{g_{\ca}}((\Phi\circ \phi^{-1})^*J_X-J_{\ca})=(\phi^{-1})^*(\nabla_{\phi^*g_\ca}^k(\Phi^*J_X-J_\ca)),
\end{equation}
  \begin{equation}
  	\nabla^k_{g_{\ca}}((\Phi\circ \phi^{-1})^*\omega_t-\omega_{\ca})|=(\phi^{-1})^*(\nabla_{\phi^*g_\ca}^k(\Phi^*\omega_t-\phi^*\omega_{\ca})).
  \end{equation} 
  As $g_{\ca}$ and $\phi^*g_{\ca}$ are Riemannian cone metrics with the same scaling vector field, the weighted analysis with respect to the metric $g_\ca$ is comparable to that with respect to $\phi^*g_\ca$. This can be seen for example by writing $\phi^*g=g(\theta\cdot,\cdot)$. Then we have $\mathcal L_{r\partial r}\theta=0$ and as a consequence we have $|\nabla_{g_\ca}^j\theta|\sim r^{-j}$. Then \eqref{eq--want to show complex close} follows from \eqref{eq--conical at T} and a direct computation.  
 Since $u_1\in C_{2+\beta,0}^{2,\alpha}$ and $u_2$ is $J_X$-plurisubharmonic, and \eqref{eq--conical at T}, we know that 
 \begin{equation}
 	 \Phi^*\omega_t-\phi^*\omega_{\ca}=(\Phi^*\omega_T-\omega_\ca)+d \eta
 	  	 \end{equation}
 for some 1-form $\eta$ satisfying 
 \begin{equation}\label{eq--estimate for eta}
 	|\nabla^j_{g_\ca}\eta|\leq C \left(|\nabla_{g_\ca}^j(\Phi^*(J_Xd u_1))|+|\nabla^j_{g_\ca}(\Phi^*J_X-J_\ca)(d(r^2\circ\phi-r^2))\right).
 \end{equation}
Then using scaled Schauder estimates, we can get higher order estimate for $u_1$ and hence for $\Phi^*\omega_t-\phi^*\omega_{\ca}$ using \eqref{eq--conical at T} and \eqref{eq--estimate for eta}.
\end{proof}

\subsection{Closeness} In this section, we are going to show the set $I$, which consists of $t$ such that $\omega_t$ is polynomially close to $\ca$ near $p$, is closed, i.e., if $t_i\in I$ and $t_i\rightarrow T$, then $T\in I$. Firstly, we recall the following uniform estimate proved in \cite{DFS,Fu,GGZ23}.

\begin{proposition}
	There exists a constant $C_0$ independent of $t$ such that 
	$\left\|\varphi_{t}\right\|_{L^{\infty}}\leq C_0$.
			Moreover, for any $k\geq 0 $ and compact subset $K\subset \subset \ols{U}\setminus \{p\}$, there exists constants $C_{K,k}$ such that 
			\begin{equation}\label{eq--uniform higher estimate}
	\|\nabla_{\omega_0}^k\varphi_{t}\|_{L^{\infty}(K)}\leq C_{K,k}.
		\end{equation}
\end{proposition}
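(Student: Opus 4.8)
The plan is to reduce this to two standard ingredients matching the two assertions: a uniform $C^0$ bound coming from the comparison principle for complex Monge--Amp\`ere equations, and uniform local (including up-to-the-boundary) higher--order estimates away from $p$ coming from the second--order and complex Evans--Krylov estimates together with Schauder bootstrapping. Both are contained in \cite{DFS, Fu, GGZ23} (building on \cite{CKNS, Kol}); the only point to check is that every constant can be taken independent of $t$, which hinges on the sign $\lambda \le 0$.

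First I would establish $\|\varphi_t\|_{L^\infty(U)} \le C_0$ with $C_0$ independent of $t$, working with the comparison principle on $U$ (a bounded plurisubharmonic function extends across the isolated point $p$, and the relevant Monge--Amp\`ere measures put no mass there, so the principle is legitimate). Comparing $\varphi_t$ with the constant $C_1 := \sup_{\partial U}\varphi_{KE}$, for which $(\ii\partial\pp C_1)^n = 0 \le (\ii\partial\pp\varphi_t)^n$ and $C_1 \ge \varphi_t$ on $\partial U$, gives $\varphi_t \le C_1$ on $U$. Since $\lambda \le 0$, $t \in [0,1]$, and $f_1$ is bounded near $p$ (being pluriharmonic there), the density $e^{-\lambda t(\varphi_t + f_1)}$ is then bounded above by the $t$--independent constant $A := e^{|\lambda|(C_1 + \|f_1\|_{L^\infty})}$. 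For the lower bound I would use the bounded potential $\varphi_0$ of Lemma \ref{lemma-initial metric}: since $\omega_0 = \ii\partial\pp\varphi_0$ and $\varphi_0|_{\partial U} = \varphi_{KE}|_{\partial U}$, the function $w := A^{1/n}\varphi_0 - (A^{1/n}-1)C_1$ is bounded plurisubharmonic with $(\ii\partial\pp w)^n = A\,\omega_0^n \ge (\ii\partial\pp\varphi_t)^n$ and $w \le \varphi_t$ on $\partial U$ (because $A \ge 1$ and $\varphi_{KE} \le C_1$ there), so the comparison principle gives $\varphi_t \ge w \ge \inf_U w$. This yields the uniform $C^0$ bound.

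Next I would fix $K \subset\subset \overline{U}\setminus\{p\}$ and prove \eqref{eq--uniform higher estimate} by working on a slightly larger such set, on which $\omega_0$ is a fixed smooth K\"ahler metric, $f_1$ is smooth with bounded derivatives, and, by the $C^0$ bound, the density $e^{-\lambda t(\varphi_t + f_1)}$ is pinched between two fixed positive constants. Running Yau's second--order estimate for $\operatorname{tr}_{\omega_0}\omega_t$ gives a uniform Laplacian bound on a smaller set; the computation goes through with $t$--independent constants precisely because $\lambda \le 0$ and $t \in [0,1]$, so the term $-\lambda t\,\Delta_{\omega_0}\varphi_t$ has the favorable sign, while at points of $K$ lying on $\partial U$ one uses instead the boundary second--order estimates of \cite{CKNS}, available since $\partial U = \{r = \delta_1\}$ is strictly pseudoconvex and $\varphi_{KE}|_{\partial U}$ is fixed and smooth. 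This produces uniform ellipticity $C^{-1}\omega_0 \le \omega_t \le C\omega_0$ on a smaller set; the concavity of the Monge--Amp\`ere operator then gives interior and boundary $C^{2,\alpha}$ bounds via the complex Evans--Krylov theorem, and differentiating \eqref{eq--continuity path} and iterating Schauder estimates yields $\|\nabla^k_{\omega_0}\varphi_t\|_{L^\infty(K)} \le C_{K,k}$ for all $k$, with constants depending only on $K$, $k$ and the fixed data $\omega_0, f_1, \varphi_{KE}$.

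The step I expect to need the most care is the interplay with the singular point $p$: the $C^0$ estimate has to be phrased in the pluripotential framework, so that the comparison principle is legitimate across $p$ and so that $\varphi_0$ is an admissible bounded barrier even though $\omega_0 = \ii\partial\pp\varphi_0$ is singular (conical) at $p$; the higher--order estimates, by contrast, are made away from $p$ and are otherwise classical. Their uniformity in $t$ is then automatic, since the only $t$--dependence of \eqref{eq--continuity path} enters through the already--bounded function $\varphi_t$ and through the coefficient $-\lambda t \in [0,1]$; it is exactly this last feature that fails in the positive Ricci case, and is why the argument is restricted to $\lambda \le 0$.
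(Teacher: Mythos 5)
The paper does not actually prove this proposition; it is quoted from \cite{DFS,Fu,GGZ23}, so the question is whether your sketch could stand in for those references. Your $C^0$ part essentially can: the upper bound $\varphi_t\le \sup_{\partial U}\varphi_{KE}$ is just the maximum principle for bounded psh functions (extended across $p$), and your barrier $w=A^{1/n}\varphi_0-(A^{1/n}-1)C_1$ built from the bounded potential $\varphi_0$ of Lemma \ref{lemma-initial metric} gives the lower bound by the comparison principle on the normal Stein space, since the density $e^{-\lambda t(\varphi_t+f_1)}$ is bounded above once the upper bound on $\varphi_t$ is known (take $A\ge 1$); this is correct and even avoids the Kolodziej-type $L^p$ argument used in the references. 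Your observation that all $t$-dependence enters through $-\lambda t\in[0,1]$ and the already bounded $\varphi_t$ is also the right reason for uniformity.

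The higher-order part, however, has a genuine gap at the interior Laplacian estimate. You propose to ``run Yau's second-order estimate for $\operatorname{tr}_{\omega_0}\omega_t$'' on a slightly larger compact set $K'\subset\subset\overline U\setminus\{p\}$, but there is no purely interior $C^2$ estimate for the complex Monge--Amp\`ere equation depending only on $\|\varphi_t\|_{L^\infty}$ and a smooth positive density (Pogorelov/B\l ocki-type examples rule this out); if you localize the Yau/Chern--Lu test function with a cutoff supported near $K'$, the cross terms involving $\nabla\eta\cdot\nabla\varphi_t$ cannot be absorbed, and without a cutoff the maximum of the test function may escape toward $p$, where $\omega_0$ has unbounded curvature ($|\Rm_{\omega_0}|\sim r^{-2}$) and where no uniform-in-$t$ control of $\omega_t$ is available --- such control near $p$ is exactly what the closedness argument is trying to establish, so it cannot be assumed. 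The argument the cited references (and the paper's own sketch for the analogous proposition in Section \ref{sec-examples}) use is global on $U$: apply Chern--Lu with respect to the restriction of the Euclidean metric of $\mathbb C^N$, whose bisectional curvature is bounded above up to $p$ and whose potential is bounded, together with the uniform $C^0$ bound; handle the strictly pseudoconvex boundary $\partial U$ by CKNS-type barriers with the fixed boundary data \cite{CKNS}, and handle the singular point either by pluripotential arguments as in \cite{Fu,GGZ23} or, since each $\omega_t$ along the path is conical at $p$, by adding a weight such as $\epsilon\log|z|$ to force the maximum away from $p$ and letting $\epsilon\to0$. Only after the resulting uniform two-sided comparison with a fixed smooth metric on compact subsets of $\overline U\setminus\{p\}$ do Evans--Krylov and Schauder bootstrapping give \eqref{eq--uniform higher estimate}; that final step of your sketch is fine, but as written the step producing the uniform Laplacian bound does not close.
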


%
%Firstly, there is a uniform $C^0$-estimate for $\varphi_t$. This has been proved by \cite{GGZ23,Fu}. 
%Let $\omega_{E}=\ii\partial\pp \varphi_E$ be the restriction of the Euclidean metric on $\mathbb C^N$. Then there exists a constant $C_0$ independent of $t$ such that $\omega_t=\ii\partial \pp\varphi_t$ on $U\setminus \{p\}$ satisfies the following  
%\begin{equation}
%\operatorname {Ric}(\omega_t)\geq -t\omega_t-C_0 \omega_E
%\end{equation}
%Then Chern-Lu inequality \cite[Proposition 7.2]{Rubinstein2014} and applying maximal principle to $$\log(\operatorname{tr}_{\omega_t}\omega_E)+A\varphi_E$$ for a large constant $A$ independent of $t$, we can get a uniform lower bound for $\omega_t$. Again the local higher order regularity follows from the standard elliptic theory.
%\begin{equation}
%	\Delta_{\omega_t}\log(\operatorname{tr}_{\omega_t}\tilde \omega)\geq -1-C_0\operatorname{tr}_{\omega_t}(\tilde \omega)
%\end{equation
%

Next we want to obtain a uniform diameter upper bound for $\omega_t$.
As  we have the uniform estimate \eqref{eq--uniform higher estimate} away from $p$, it is sufficient to show that $\operatorname{diam}(V,\omega_t)$ has a uniform upper bound, where $V$ is a small neighborhood of $p$. Notice that we can choose $V$ small such that  $\omega_t$ is K\"aher-Einstein on $V$. In the Calabi-Yau setting, i.e. $\lambda=0$, this diameter upper bound can be proved in the same way as Lemma \ref{lemma--diameter bound}. For negative K\"ahler-Einstein metrics, the uniform diameter bound can be proved following \cite{GPSS23}. For readers' convenience, we give some details in Appendix 2.
\begin{proposition}
 	There exists a constant independent of $t$ such that 
 	\begin{equation}
 		\operatorname{diam}(U,\omega_t)\leq C.
 	\end{equation}
 \end{proposition}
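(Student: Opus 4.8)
The plan is to localise near $p$ and then split according to the sign of $\lambda$. First I would fix a small neighbourhood $V\subset\subset U$ of $p$ on which $\operatorname{Ric}(\omega_0)=0$ and $f_1$ is pluriharmonic, both of which hold near $p$ by Lemma \ref{lemma-initial metric} and the definition of $f_1$. By the uniform higher-order estimates \eqref{eq--uniform higher estimate} on compact subsets of $\bar U\setminus\{p\}$, together with the equation \eqref{eq--continuity path} (which pins $\omega_t^n$ from below away from $p$), the metrics $\omega_t$ are uniformly bi-Lipschitz to $\omega_0$ on the collar $\bar U\setminus V$; hence $\operatorname{diam}(\bar U\setminus V,\omega_t)$ is uniformly bounded and it suffices to bound $\sup_{x\in V}d_{\omega_t}(x,\partial V)$. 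Applying $\ii\partial\pp\log$ to \eqref{eq--continuity path} and using $\operatorname{Ric}(\omega_0)=0$ and $\ii\partial\pp f_1=0$ on $V$ gives $\operatorname{Ric}(\omega_t)=\lambda t\,\omega_t$ there; and since $\|\varphi_t\|_{L^\infty}\le C_0$ and $f_1$ is bounded on $V$, the density $e^{-\lambda t(\varphi_t+f_1)}$ is pinched between two positive constants independent of $t$, so $\omega_t^n$ is uniformly comparable on $V$ to the fixed (conical, finite-volume) reference volume form $\omega_0^n$. In particular $\omega_t^n$ is uniformly bounded in every $L^p(V,\omega_0^n)$, and, since $\omega_0$ is polynomially close to a Calabi--Yau cone with smooth link, the $\omega_t$-balls centred at points near $p$ are uniformly volume non-collapsed.

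When $\lambda=0$ we are exactly in the situation of Lemma \ref{lemma--diameter bound}: on $V$ one has $\operatorname{Ric}(\omega_t)=0$ and $\int_V\omega_t^n=\int_V\omega_0^n$ is uniformly bounded, so if $\sup_{x\in V}d_{\omega_t}(x,\partial V)\to\infty$ along some sequence $t_i$, then $d_{\omega_{t_i}}(p,\partial V)\to\infty$ and a contradiction with Yau's volume-growth theorem \cite[Theorem I.4.1]{SY} follows verbatim as in the proof of Lemma \ref{lemma--diameter bound}.

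The substantive case is $\lambda=-1$, where $\operatorname{Ric}(\omega_t)=-t\,\omega_t\ge-\omega_t$ on $V$ but Yau's argument is unavailable, since a K\"ahler metric with $\operatorname{Ric}\ge-\omega$ and bounded volume may have unbounded diameter. Here I would follow the Monge--Amp\`ere diameter estimates of Guo--Phong--Song--Sturm \cite{GPSS23}: for each $x\in V$ one solves an auxiliary complex Monge--Amp\`ere equation on $V$, with zero boundary data and right-hand side adapted to a regularisation of the $\omega_t$-distance to $x$; the a priori $L^\infty$ bound for its solution --- which needs only a uniform $L^p$ bound, $p>n$, on $\omega_t^n/\omega_0^n$, available here with room to spare --- combined with the lower Ricci bound $\operatorname{Ric}(\omega_t)\ge-\omega_t$ and the uniform volume non-collapsing yields $\sup_{x\in V}d_{\omega_t}(x,\partial V)\le C$. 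The only differences from the compact setting of \cite{GPSS23} are that $V$ is a manifold with fixed strictly pseudoconvex boundary and that the background $\omega_0$ is conical at $p$; both are handled exactly as in the local arguments of \cite{GGZ23,DFS,Fu}, with the remaining computations carried out in Appendix 2.

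I expect this last step to be the main obstacle: one must localise the auxiliary-PDE argument of \cite{GPSS23} to $(V,\omega_t)$ and check that every constant --- in the $L^\infty$ estimate for the auxiliary potential and in the iteration converting volume control into distance control --- depends only on the uniform two-sided density bounds, the lower Ricci bound, and the fixed geometry of $\partial V$, not on $t$. A secondary technical point is to extract the uniform volume non-collapsing for small $\omega_t$-balls near $p$ from $\omega_t^n\sim\omega_0^n$ and the smooth-link asymptotics of $\omega_0$, which is what lets one recover distance bounds from volume bounds.
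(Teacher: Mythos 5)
Your high-level plan (localize near $p$, handle $\lambda=0$ by Yau's theorem as in Lemma \ref{lemma--diameter bound}, and handle $\lambda=-1$ by the Guo--Phong--Song--Sturm scheme) is the same as the paper's, but there is a genuine gap at the step you dismiss as a ``secondary technical point'': the uniform volume non-collapsing of $\omega_t$-balls near $p$. You propose to extract it from $\omega_t^n\sim\omega_0^n$ together with the conical asymptotics of $\omega_0$ (or of each $\omega_t$). This does not follow: comparability of volume forms gives no control on metric balls, since $B_{\omega_t}(x,r)$ need not be comparable to any $\omega_0$-ball, and the conical behavior of each individual $\omega_t$ at $p$ is not uniform in $t$ (the rate and the neighborhood may degenerate as $t\to T$ --- if it were uniform, the proposition would be essentially vacuous). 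Establishing this non-collapsing uniformly in $t$ is precisely the main content of the argument, not an input one can quote. Relatedly, your appeal to the lower Ricci bound $\operatorname{Ric}(\omega_t)\ge-\omega_t$ is a red herring: the GPSS-type argument uses no Ricci hypothesis, and once uniform non-collapsing at a fixed scale is known, the diameter bound follows from the uniform volume upper bound alone by placing disjoint balls along a long geodesic; conversely, Ricci bounded below plus bounded volume does not by itself give non-collapsing or a diameter bound.

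For comparison, the paper's Appendix 2 runs the auxiliary Monge--Amp\`ere device at the level of Green's functions rather than distance functions: the $L^\infty$ estimates of \cite{GGZ23} applied to an auxiliary equation with right-hand side $H_k^{n\epsilon_0}e^F\omega_E^n$ give the uniform $L^{1+\epsilon_0}$ bound for the Green functions of $\Delta_{\omega_t}$ on the exhaustion $U_\delta$ (Lemma \ref{lemma-higher integral bound}), hence a uniform Sobolev inequality (Proposition \ref{prop--sobolev inequality}); the conicality of each $\omega_t$ is used only qualitatively, to build cutoffs of small energy so the Sobolev inequality passes across the puncture; a Moser-type iteration then yields $\operatorname{Vol}_{\omega_t}(B_{\omega_t}(x,r))\ge\kappa r^{2q/(q-1)}$ uniformly in $t$, and the diameter bound follows by the disjoint-ball/total-volume argument. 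Your write-up would be correct if you replaced the claimed derivation of non-collapsing by this (or an equivalent) uniform Sobolev/iteration argument, with all constants traced to the uniform potential and $L^p$ density bounds \eqref{eq--uniform Linfinity estimate}.
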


As illustrated in \cite{CW}, the Bishop-Gromov volume comparison still holds for the metrics defined by $\omega_t$, therefore this uniform diameter upper bound implies volume non-collapsing, that is there exists a $\kappa>0$ such that for every $r <\min\{1, \operatorname{dist}(x,\partial U)\}$, we have 
\begin{equation}
	\operatorname{Vol}(B(x,r),\omega_t)\geq \kappa r^{2n}.
\end{equation}
Then we can use Chen-Wang's compactness \cite{CW}, which generalize results in \cite{ChCo1,CCT} to metric space swith mild singularities. Note that we have uniform estimate of $\omega_{t}$ away from $p$, therefore the boundary of $U$ will not cause extra issues when we discuss the convergence of $\omega_t$.
Although in \cite{CW}, they only state the results for Calabi-Yau metrics (as they are mainly interested in complete non-compact metrics spaces there), it is direct to check that the statements also hold of K\"ahler-Einstein metrics. We refer to \cite{CW} for the precise meaning of the convergence in the following. We only remark here that this ${\hat{C}^{\infty}}$ implies pointed  Gromov-Hausdorff convergence.
\begin{theorem}[\cite{CW}]
	For each sequence $\omega_{t_i}$ by taking subsequence if necessary, we have
	\begin{equation}
		(U,p,\omega_{t_i})\xrightarrow{\hat{C}^{\infty}}(U_\infty, p_\infty, d_\infty).
	\end{equation}Moreover we have the following.
		\begin{itemize}
		\item $(U_\infty, d_{\infty})$ is a length space and has a regular–-singular decomposition $U_\infty=\mathcal R\cup\mathcal S$ such that there exists a K\"ahler structure $(g_\infty,J_\infty)$ on $\mathcal R$ and $\mathcal S$ is closed subset and has Hausdorff codimension at least  4.
		\item The distance structure induced by $g$ is the same as the restriction of $d_\infty$ and $\mathcal R$ is geodesically convex.
\item Every tangent space of $x \in \mathcal S$ is a metric cone.
	\end{itemize}
	\end{theorem}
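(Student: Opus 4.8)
The plan is to obtain this as a direct application of Chen--Wang's compactness and $\epsilon$-regularity theory in \cite{CW}, once one checks that the uniform estimates assembled above are exactly the hypotheses that theory requires. There are three such inputs. First, a uniform two-sided Ricci bound: on a fixed small neighborhood $V$ of $p$ the metric $\omega_{t_i}$ is genuinely K\"ahler-Einstein, $\operatorname{Ric}(\omega_{t_i})=\lambda t_i\,\omega_{t_i}$ with $|\lambda t_i|\le 1$. Second, volume non-collapsing: this follows from the uniform diameter upper bound together with the Bishop--Gromov comparison, which is available for these mildly singular K\"ahler-Einstein spaces as noted above. Third, the ``mild singularity'' condition, supplied by the uniform potential bound $\|\varphi_{t_i}\|_{L^\infty}\le C_0$: it is precisely this that lets each $(U,\omega_{t_i})$ be regarded as an admissible singular K\"ahler-Einstein space in the sense of \cite{CW}, even though $\omega_{t_i}$ is only defined and smooth on $U\setminus\{p\}$.

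First I would treat the region away from $p$. On any compact $K\subset\subset \ols{U}\setminus\{p\}$ the uniform higher-order estimates \eqref{eq--uniform higher estimate} give, after passing to a subsequence, smooth convergence $\omega_{t_i}\to\omega_\infty$ to a K\"ahler-Einstein metric; in particular the behavior near $\partial U$ is entirely controlled, so the boundary plays no role in the limiting procedure. It then remains to understand the convergence in a fixed neighborhood $V$ of $p$, where the metrics are K\"ahler-Einstein with bounded Ricci and non-collapsed. Here I would invoke the machinery of \cite{CW} verbatim: the pointed spaces $(U,p,\omega_{t_i})$ subconverge in the strong $\hat C^\infty$ sense --- which in particular refines pointed Gromov--Hausdorff convergence --- to a limit $(U_\infty,p_\infty,d_\infty)$, with the limit on $V$ and the smooth limit on $\ols{U}\setminus\{p\}$ agreeing on the overlap. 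The asserted structure of $U_\infty$ --- it is a length space; it admits a regular--singular decomposition $U_\infty=\mathcal R\cup\mathcal S$ with $\mathcal S$ closed of Hausdorff codimension $\ge 4$ and a K\"ahler structure $(g_\infty,J_\infty)$ on $\mathcal R$; $\mathcal R$ is geodesically convex and its intrinsic distance agrees with the restriction of $d_\infty$; every tangent space at a point of $\mathcal S$ is a metric cone --- is then exactly the content of the structure theory of \cite{CW}, which extends that of \cite{ChCo1,CCT}.

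The only point requiring comment is that \cite{CW} is phrased for Calabi--Yau metrics while here $\lambda$ may equal $-1$. This is harmless: the Cheeger--Colding-type arguments behind \cite{CW} --- segment inequality, almost-monotonicity of the volume ratio, cone-splitting, $\epsilon$-regularity --- use only a two-sided Ricci bound and non-collapsing, both of which hold here with $t_i$-independent constants, and the Einstein constant enters only through elementary changes in the monotone quantities. I would therefore simply record that each step of \cite{CW} carries over with $\operatorname{Ric}=0$ replaced by $\operatorname{Ric}=\lambda\,\omega$, $|\lambda|\le 1$.

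As for difficulty: the substantive analytic work is already behind us --- it is the uniform diameter bound (which in the negative case needs the argument of \cite{GPSS23} outlined in Appendix 2) and hence the non-collapsing. Granting these, the theorem is close to a formal citation of \cite{CW}, and the only genuinely delicate bookkeeping is confirming that the isolated singularity at $p$, where $\omega_{t_i}$ lives only on $V\setminus\{p\}$, does not obstruct membership of $(U,\omega_{t_i})$ in the class covered by \cite{CW} --- which is exactly where the bounded-potential hypothesis is invoked.
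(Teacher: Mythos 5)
Your proposal is correct and follows essentially the same route as the paper: verify the Chen--Wang hypotheses (two-sided Ricci bound near $p$ from $\operatorname{Ric}(\omega_{t_i})=\lambda t_i\,\omega_{t_i}$ with $|\lambda t_i|\le 1$, non-collapsing via the uniform diameter bound and Bishop--Gromov, and the bounded-potential condition making the isolated singularity ``mild''), handle $\ols{U}\setminus\{p\}$ by the uniform higher-order estimates so the boundary is inert, and then cite \cite{CW}, noting that the Calabi--Yau case there extends to K\"ahler--Einstein with a uniform Einstein constant bound. This matches the paper's treatment, which likewise presents the statement as a citation of \cite{CW} after making the same three observations.
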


Since we have uniform estimate of $\omega_{t}$ away from $p$, passing to a subsequence, we also know that $\omega_{t_i}=\ii\partial\pp \varphi_{t_i}\rightarrow \ii\partial\pp \varphi_{T}$ in $C_{loc}^{\infty}(U\setminus p)$. Let $g_T$ denote the Riemannian metric corresponds to the K\"ahler form $\ii\partial\pp \varphi_{T}$.
	Then using the argument in \cite{DS2} \cite[Section 5]{SZ2023}, we obtain the following.
\begin{proposition}  
 $(U_\infty, g_{\infty}, J_\infty)$ coincides with the metric completion of $(U\setminus p, g_T, J_X)$ near $p$, which is naturally homeomorphic to $U$.
\end{proposition}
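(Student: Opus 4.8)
The plan is to follow the arguments of Donaldson--Sun \cite{DS2} and \cite[Section 5]{SZ2023}, which transfer to the present setting because all the necessary uniform inputs are available here: the $L^\infty$-bound $\|\varphi_t\|_{L^\infty}\le C_0$, the local higher-order estimates \eqref{eq--uniform higher estimate} away from $p$, the uniform diameter bound $\operatorname{diam}(U,\omega_t)\le C$, and hence volume non-collapsing and Bishop--Gromov for the mildly singular spaces of \cite{CW}.

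\emph{Step 1: matching the regular parts.} Away from $p$ we have two compatible forms of convergence: $\omega_{t_i}\to\omega_T:=\ii\partial\pp\varphi_T$ in $C^\infty_{loc}(U\setminus\{p\})$, and the $\hat C^\infty$-convergence of $(U,p,\omega_{t_i})$ to $(U_\infty,p_\infty,d_\infty)$. A standard argument --- the reference embeddings realizing the $\hat C^\infty$-convergence are pinned down up to the isometry group by harmonic-radius estimates, and the smoothness of $g_T$ away from $p$ forces them to converge --- produces a holomorphic open isometric embedding $\iota\colon(U\setminus\{p\},g_T,J_X)\hookrightarrow(\mathcal R,g_\infty,J_\infty)$. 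By the triangle inequality for $d_\infty$, any point $y\in U_\infty$ near $p_\infty$ that is not of the form $\iota(x)$ must satisfy $d_\infty(y,p_\infty)=0$, i.e.\ $y=p_\infty$; hence, as a set, a neighbourhood of $p_\infty$ in $U_\infty$ is $\iota(U\setminus\{p\})\cup\{p_\infty\}$, with $\iota(U\setminus\{p\})$ open and dense.

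\emph{Step 2: the complex-analytic identification.} It remains to see that this set, with the metric $d_\infty$, is the metric completion of $(U\setminus\{p\},g_T)$ near $p$ and is homeomorphic to $U$. Here I would run the Hörmander $L^2$-construction of \cite{DS2}. Because $\omega_0$ is asymptotic near $p$ to the Calabi--Yau cone $\ca$ with smooth link and the potentials $\varphi_t$ are uniformly bounded, $p$ admits arbitrarily small strictly pseudoconvex Stein neighbourhoods with respect to every $\omega_{t_i}$ (a suitably perturbed strictly plurisubharmonic exhaustion of the cone works). One then constructs on a neighbourhood of $p_\infty$ in $U_\infty$ holomorphic functions that are $\hat C^\infty$-limits of the restrictions $z_1|_X,\dots,z_N|_X$ of the ambient coordinates; these separate points, give local embeddings, and exhibit $U_\infty$ near $p_\infty$ as a normal analytic space equipped with a homeomorphism onto $U\subset X$ which is biholomorphic on regular parts and carries $J_\infty$ to $J_X$. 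In particular $p_\infty$ is a single point, $\iota$ extends to this homeomorphism, and $(U_\infty,g_\infty,J_\infty)$ is the metric completion of $(U\setminus\{p\},g_T,J_X)$ near $p$.

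\emph{Main obstacle.} The delicate point is to rule out any escape of compactness near $p$: one must know that, uniformly in $i$, the $\omega_{t_i}$-diameter of shrinking neighbourhoods of $p$ tends to $0$, so that the region collapsing onto $p$ in the limit is the single point $p_\infty$ and the completed topology agrees with that of $U$. This is exactly what the uniform diameter bound $\operatorname{diam}(U,\omega_t)\le C$, the uniform potential bound $\|\varphi_t\|_{L^\infty}\le C_0$, and Bishop--Gromov for the spaces of \cite{CW} are meant to supply; once they are in hand the argument proceeds verbatim as in \cite{DS2} and \cite[Section 5]{SZ2023}.
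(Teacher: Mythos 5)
Your proposal is correct and follows essentially the same route as the paper, which at this point simply invokes the arguments of \cite{DS2} and \cite[Section 5]{SZ2023} with exactly the uniform inputs you list (bounded potentials, estimates away from $p$, the diameter bound and the resulting non-collapsing via Bishop--Gromov for the Chen--Wang spaces, plus the H\"ormander $L^2$ construction on small Stein neighbourhoods of $p$). The ``main obstacle'' you flag is indeed the crux and is closed just as you indicate: the uniform potential bound together with the Monge--Amp\`ere equation gives uniformly small $\omega_{t_i}$-volume of shrinking neighbourhoods of $p$, which combined with non-collapsing forces their diameters to shrink uniformly, so the set-theoretic identification in your Step 1 and the completion statement follow.
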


Then Donaldson-Sun's 2 step degeneration theory still holds in this setting, as we can use H\"ormander $L^2$-method.  Let $\mathcal C'$ denote the tangent cone of $(g_\infty,J_\infty)$ at $p$. Then by \cite{LWX}, this $\mathcal C'$ has to be isomorphic to $\mathcal C$, which has an isolated singularity. Then the argument in Section \ref{imply polynomial convergence} can also be applied to $(g_\infty, J_\infty)=(\omega_T, J_X)$ to get polynomial closeness and therefore $T\in I$.  

\subsection{Positive singular K\"ahler-Einstein metrics}\label{Section--positive ricci}

This continuity method does not work for singular K\"ahler-Einstein metrics with positive Ricci curvature, due to the lack of the openness and uniqueness; see \cite[Section 5]{Guedj2013} for a related discussion. See \cite{GT23} for using variational method to study the existence of singular K\"ahler-Einstein metrics in a neighborhood of log terminal singularities. 

It is very likely that in the global setting we can avoid such an issue. Let $\omega_{KE}$ denote a singular K\"ahler-Einstein metric on a $\mathbb Q$-Fano variety $X$, which has only isolated singularities and $\ca=W$ for every point in $X^{sing}$. 
 To show that $\omega_{KE}$ is conical, one may need two steps.
 
  The first step is to construct a K\"ahler metric on $X$ with non-negative Ricci curvature and polynomially close to Calabi-Yau cones near singular points of $X$. Let $\theta_0 \in c_1(K_X^{-1})$ be a smooth K\"ahler form. Solving a complex Monge-Amp\`ere locally as we did in Section \ref{imply polynomial convergence} and using the cut-off and glueing trick in \cite{ArezzoSpotti}, we obtain a K\"ahler form $\alpha_0=\theta_0+\ii\partial\pp \psi_0$, which is Ricci-flat and polynomially close to Calabi-Yau cones near singularities of $X$, and a non-negative (1,1)-form $\eta\in c_1(K_X^{-1})$, which is zero in a neighborhood of $X^{sing}$ and coincide with $\theta_0$ outside a neighborhood of $X^{sing}$. Then we know that there exists a function $f$, which is smooth on $X^{reg}$ and pluriharmonic in a neighborhood of $X^{sing}$ and satisfies
\begin{equation}
	\operatorname{Ric}(\alpha_0)-\eta=\ii\partial\pp f.
\end{equation}
We consider the first continuous path
\begin{equation}\label{eq--first path}
	(\theta_0+\ii\partial\pp \psi_t)^n=c_te^{tf}\alpha_0^n.
\end{equation}where $c_t$ is a normalization constant. Note that along the path \eqref{eq--first path}, the K\"ahler metrics are Ricci-flat in a neighborhood of $X^{sing}$ and one can adapt the argument  in Section \ref{Section--local} to show that every metric in this continuous path, is polynomially close to Calabi-Yau cones near $X^{sing}$. As a output, we get a K\"ahler metric $$ \alpha:={\theta}_0+\ii\partial\pp\psi_1,$$ with $\operatorname{Ric}(\alpha)=\eta$ and polynomially close to Calabi-Yau cones near $X^{sing}$.

 The second continuity path is the classical Aubin's:
  \begin{equation}\label{eq--continuous path in positive case}
	(\alpha+\ii\partial\pp \varphi_t)^n=e^{h-t\varphi_t}\alpha^n. 
\end{equation} Here $h$ is a smooth function on $X^{reg}$, and in a neighborhood of $X^{sing}$ can be decomposed as $h_0+h_1$, where $h_0$ is pluriharmonic and $h_1$ satisfies  $|\nabla^k_{\alpha} h_1|=O(r^{2-k})$, and satisfies
\begin{equation*}
	\operatorname{Ric}(\alpha)- \alpha=\ii\partial\pp h \text{ and } \int_X (e^h-1)\alpha^n=0.
\end{equation*}
We want to show that along this continuous path, every metric is conical. Let $\omega_t$ be a solution of \eqref{eq--continuous path in positive case}, then $\operatorname{Ric}( \omega_t)=t\omega_t+(1-t)\alpha$.  Openness for $t\in (0,1)$ can be proved in a similar vein to Aubin's work \cite{Aubin} and openness at $t=0$ can be proved using a trick of Sz\'ekelyhidi \cite[Section 3.5]{Szekelyhidi2014book}. Darvas \cite{Darvas} proves that the existence of a singular K\"ahelr-Einstein metric implies that the $K$-energy is proper in a certain sense. Given that the functional $J_{\chi}$ is convex along smooth geodesics when $\chi$ is a semispositive form proved by Chen \cite{Chen2000}, and holomorphic vector fields generate smooth geodesics in the space of K\"ahler potentials, one may adapt the argument in \cite[Section 2]{Tian2012} to obtain the uniform $C^0$ bound for $\varphi_t$ and hence higher order estimate away from $X^{sing}$. To show that $\omega_0$ is conical, we need to use another continuous path 
\begin{equation}\label{eq--path for initial metric}
	(\alpha+\ii\partial\pp u_t)^n=c_te^{th}\alpha^n.
\end{equation}
Then one may generalize the compactness and regularity results in \cite{ChCo1,CCT, CW} to non-collapsing K\"ahler metrics with isolated singularities and Ricci curvature lower bound, and then establish the closeness of being conical in the continuous path \eqref{eq--continuous path in positive case} and \eqref{eq--path for initial metric}.

\section*{Appendix 1: Laplacian for conical metrics}

The result in this appendix is standard and we just aim to give a detailed and elementary proof of the result we used in Section \ref{sec--openness}. 
First we recall some notations. Let $(g_\ca,J_\ca)$ denote a Calabi-Yau cone metric and $(g,J)$ denote a conical metric of order $\delta$ in the sense of Definition \ref{def--conical} defined on $B_2$, the ball of radius 2 centered at the vertex of the cone. The weighted H\"older spaces are defined by \eqref{eq--definition of banach space} and $C_{2+\beta,0}^{k+2,\alpha}$ is the Banach subspace of $C_{2+\beta}^{k+2,\alpha}$, consisting of functions which have zero boundary value.

  Let $\mathcal{PH}$ denote the vector space of homogeneous $J_\ca$-pluriharmonic real-valued functions with growth rate in $(0,2]$. It is direct to show that these $J_\ca$-pluriharmonic functions are real part of $J_\ca$-holomoprhic functions \cite[Lemma 2.13]{HS}. As $J$ is polynomially close to $J_\ca$, using H\"ormander $L^2$-method as we did in Section \ref{polynomial convergence imply}, we can construct $J$-pluriharmonic functions with growth rate in $(0,2]$ that converges to homogeneous $J_\ca$-pluriharmonic functions. Let $\mathcal P$ denote a vector space spanned by $\dim \mathcal{PH}$-dimensional $J$-pluriharmonic functions with growth rate in $(0,2]$. $\mathcal P$ is not unique and any choice of such a space works for the following argument. Let $\mathcal H$ denote the space of all
$\xi$-invariant 2-homogeneous $\Delta_{g_\ca}$-harmonic real-valued functions on $\ca$.

Let $0=\lambda_0<\lambda_1 \leqslant \lambda_2 \leqslant \ldots$ denotes the eigenvalues of the Laplacian on the link $L$ of $\ca$ (listed with multiplicity), and $\phi_0, \phi_1, \phi_2, \ldots$ denote an associated orthonormal basis of eigenfunctions. Then $r^{\mu_{i}^{\pm}}\phi_i$ are homogenenous harmonic functions on $\ca$, where
\begin{equation*}
	\mu_i^{ \pm}=-\frac{m-2}{2} \pm \sqrt{\frac{(m-2)^2}{4}+\lambda_i}.
\end{equation*}
Fix $\alpha \in (0,1)$ and a positive number $\beta < \min\{\delta,\mu_1^+,1\}$ such that $(2,2+\beta)\cap \{\mu_i^+\}=\emptyset$. 
 Fix a cut-off function $\chi$ such that $\chi\equiv 1$ near the vertex and $\operatorname{Supp}(\chi)\subset \subset B_1$.
 The main result of this appendix is the following. Note that due to the choice of the parameters and $(g,J)$ being conical, the operator is well-defined.

\begin{proposition}
	For any $t\geq 0$, the following operator is invertible
	\begin{equation*}
  	\Delta_{g}-t\operatorname{Id}: C_{2+\beta,0}^{k+2,\alpha}(B_1)\oplus \chi \mathcal P\oplus \chi\cdot \frac{1}{2}r^2 \oplus \chi \mathcal H\rightarrow C_{\beta}^{k,\alpha}(B_1).
  \end{equation*}\end{proposition}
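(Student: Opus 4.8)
The plan is to prove invertibility in two stages: first for the model operator $\Delta_{g_\ca}-t\operatorname{Id}$ on the cone, and then transfer to $\Delta_g-t\operatorname{Id}$ by a perturbation argument using that $(g,J)$ is conical of order $\delta>\beta$. For the model case with $t=0$, this is essentially the indicial-root analysis: separation of variables on $\ca\setminus\{o\}=\mathbb R_{>0}\times L$ writes any function as $\sum_i f_i(r)\phi_i$, and $\Delta_{g_\ca}(r^\mu\phi_i)=(\mu(\mu+m-2)-\lambda_i)r^{\mu-2}\phi_i$, so the indicial roots are exactly the $\mu_i^\pm$ of \eqref{eq--exceptional roots}. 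Since $\beta\notin\Gamma$ and $(2,2+\beta)\cap\{\mu_i^+\}=\emptyset$, the weight $2+\beta$ avoids all indicial roots in the relevant range, so on functions decaying like $r^{2+\beta}$ the Laplacian is injective with closed range; the cokernel, computed by the same ODE analysis together with the Dirichlet boundary condition on $\partial B_1$, is spanned precisely by the obstruction spaces. The point of adding $\chi\mathcal P\oplus\chi\cdot\frac12 r^2\oplus\chi\mathcal H$ to the domain is that $\Delta_{g_\ca}$ applied to these compactly-supported-modification terms produces, modulo $C^{k,\alpha}_\beta$ with better decay, exactly the $r^0$-homogeneous (the $r^2$ term, via $\Delta_{g_\ca}(\frac12 r^2)=m/2$ roughly — more precisely $= n$) and the growth-rate-$(0,2]$ harmonic pieces (from $\mathcal P$ and $\mathcal H$) that would otherwise lie in the cokernel; by Theorem \ref{thm--HS-harmonic functions}(1), every homogeneous harmonic function with growth rate in $(0,2]$ is either $J_\ca$-pluriharmonic or, at rate exactly $2$, splits as pluriharmonic plus $\xi$-invariant, so $\mathcal{PH}\oplus\frac12 r^2\oplus\mathcal H$ captures all of them. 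This is the standard Fredholm-index bookkeeping: the enlarged domain has exactly the right finite-dimensional extension to make the index zero and kill both kernel and cokernel.

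For $t>0$, I would argue $\Delta_{g_\ca}-t\operatorname{Id}$ has no kernel in the domain by a maximum-principle / integration-by-parts argument: any solution $u$ of $(\Delta_{g_\ca}-t)u=0$ with the given decay and zero boundary value satisfies $\int_{B_1}|\nabla u|^2 + t\int_{B_1}u^2 = 0$ after integrating by parts (the boundary term at $\partial B_1$ vanishes by the Dirichlet condition, and the contribution near $o$ vanishes because the domain functions decay fast enough — the $r^2$ and growth-rate-$(0,2]$ terms multiplied by $\chi$ still give an integrable integration-by-parts, using $2+\beta>0$ and the explicit rates). Hence injectivity, and since adding $-t\operatorname{Id}$ is a compact (lower-order, indeed zeroth-order but relatively compact on these weighted spaces) perturbation of the index-zero operator $\Delta_{g_\ca}$, the index is still zero, so surjectivity follows. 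One subtlety to check: the term $\chi\cdot\frac12 r^2$ is \emph{not} in $C^{k+2,\alpha}_{2+\beta,0}$ and $(\Delta_{g_\ca}-t)(\chi\cdot\frac12 r^2)$ contains a piece $-t\cdot\frac12 r^2$ of growth rate $2>\beta$, hence not obviously in $C^{k,\alpha}_\beta$ — but $\beta<1<2$ so $r^2\in C^{k,\alpha}_\beta(B_1)$ automatically since on the bounded region $B_1$ we have $r^{2}\le r^{\beta}$; so the target space genuinely contains it. The same remark applies to $\chi\mathcal P$ and $\chi\mathcal H$. I would record this as the observation that on $B_1$, $C^{k,\alpha}_{\beta'}\subset C^{k,\alpha}_\beta$ whenever $\beta'\ge\beta$.

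Finally, the transfer to $g$: write $\Delta_g=\Delta_{g_\ca}+(g-g_\ca)*\nabla^2_{g_\ca}+\nabla_{g_\ca}(g-g_\ca)*\nabla_{g_\ca}$ as in the proof of Proposition \ref{eq--right inverse}, and since $(g,J)$ is conical of order $\delta$ with $\delta>\beta$, the difference $\Delta_g-\Delta_{g_\ca}$ maps $C^{k+2,\alpha}_{2+\beta}\to C^{k,\alpha}_{\beta+\delta}\hookrightarrow C^{k,\alpha}_\beta$ with small norm on a small ball — but here the ball is fixed ($B_1$), so I instead use that $\Delta_g-\Delta_{g_\ca}$ is a \emph{compact} operator between the relevant weighted spaces (gain of $r^\delta$ decay gives compactness of the inclusion $C^{k,\alpha}_{\beta+\delta}\hookrightarrow C^{k,\alpha}_\beta$ via Arzel\`a--Ascoli on $B_1$, using $\delta>0$), so $\Delta_g-t\operatorname{Id}$ is a compact perturbation of the index-zero isomorphism $\Delta_{g_\ca}-t\operatorname{Id}$ and hence Fredholm of index zero; it remains to check injectivity. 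For injectivity on $g$ I would again integrate by parts — $\operatorname{Ric}$ plays no role since we only need $(\Delta_g-t)u=0\Rightarrow u=0$ — using that $g$ is a genuine Kähler (indeed Calabi-Yau, in the application, but just smooth Riemannian suffices here) metric so $\int|\nabla u|^2_g+t\int u^2\,d\mathrm{Vol}_g=0$; the boundary/vertex terms are controlled exactly as in the model case since $\Phi^*g\sim g_\ca$. \textbf{The main obstacle} I anticipate is not any single step but the careful accounting near the vertex: verifying that all the integration-by-parts manipulations are legitimate for the \emph{non-decaying} domain components $\chi\cdot\frac12 r^2$, $\chi\mathcal P$, $\chi\mathcal H$ (so the pairing with $(\Delta_g-t)u$ converges and no boundary contribution at $r=0$ is lost), and pinning down that the enlarged domain is exactly — not merely up to — a complement to the $r^0$-and-$(0,2]$ harmonic cokernel, which is where Theorem \ref{thm--HS-harmonic functions}(1) must be invoked with precision. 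I would isolate this in a lemma computing $\dim\operatorname{coker}(\Delta_{g_\ca}:C^{k+2,\alpha}_{2+\beta,0}\to C^{k,\alpha}_\beta)$ and matching it term-by-term with $\dim(\mathcal P\oplus\mathbb R\cdot\frac12 r^2\oplus\mathcal H)=\dim\mathcal{PH}+1+\dim\mathcal H$.
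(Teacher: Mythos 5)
Your model-cone analysis (separation of variables, indicial roots $\mu_i^\pm$, the enlarged domain $C^{k+2,\alpha}_{2+\beta,0}\oplus\chi V$ with $V$ the span of $r^{\mu_i^+}\phi_i$ for $\mu_i^+\in(0,2]$, and the dimension match $\dim V=\dim\mathcal{PH}+1+\dim\mathcal H$ via Theorem \ref{thm--HS-harmonic functions}) and the injectivity of $\Delta_g-t\operatorname{Id}$ for $t\ge 0$ follow exactly the paper's proof. The gap is in the transfer from $g_\ca$ to $g$: you claim $\Delta_g-\Delta_{g_\ca}\colon C^{k+2,\alpha}_{2+\beta}(B_1)\to C^{k,\alpha}_\beta(B_1)$ is \emph{compact} because it gains a decay factor $r^\delta$ and ``the inclusion $C^{k,\alpha}_{\beta+\delta}\hookrightarrow C^{k,\alpha}_\beta$ is compact.'' That inclusion is \emph{not} compact: for functions supported in a fixed annulus $A\subset B_1\setminus B_{1/2}$ the two weighted norms are uniformly equivalent and the inclusion restricts to the identity on $C^{k,\alpha}(A)$, which is not compact. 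Indeed $\Delta_g-\Delta_{g_\ca}$ itself fails to be compact: take $u_i$ supported in $A$, bounded in $C^{k+2,\alpha}(A)$, with $\nabla^2 u_i$ having no $C^{k,\alpha}$-convergent subsequence; then $(g-g_\ca)*\nabla^2 u_i$ has no $C^{k,\alpha}_\beta$-convergent subsequence on the set where $g\ne g_\ca$, while the lower-order pieces converge. The decay of the coefficients gives \emph{smallness} of the perturbation near the vertex, not compactness on all of $B_1$, and there is no regularity gain to invoke Arzel\`a--Ascoli where the smallness is absent.

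The paper (following \cite{LockOwen}) repairs this differently. One first invokes continuity of the Fredholm index along the path $g_s=g_\ca(\theta^s\cdot,\cdot)$, $s\in[0,1]$, which reduces the problem to showing that $\Delta_g\colon C^{k+2,\alpha}_{2+\beta,0}\to C^{k,\alpha}_\beta$ is Fredholm for each conical $g$. This is done by gluing a parametrix from two regimes: a Dirichlet right inverse $P_1$ of $\Delta_g$ used on an annulus away from the vertex, where genuine compactness comes from gain of one derivative; and a Fredholm inverse $P_2$ of the modified operator $\Delta'=\Delta_{g_\ca}+(1-\rho_r)(\Delta_g-\Delta_{g_\ca})$, which coincides with $\Delta_g$ on $B_{r/2}$ and is a \emph{small-norm} perturbation of $\Delta_{g_\ca}$ (hence Fredholm) because $(1-\rho_r)(g-g_\ca)$ is supported near the vertex where $g-g_\ca=O(r^\delta)$. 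The commutator error terms are localized on annuli where genuine compactness holds. This is the mechanism your outline is missing and cannot be replaced by a global compactness claim. (A further small mismatch: $\mathcal P$ consists of $J$-pluriharmonic functions, so the enlarged domain for $\Delta_g$ is $\chi\mathcal P\oplus\chi\cdot\frac12 r^2\oplus\chi\mathcal H$ rather than $\chi V$; the paper reconciles the two via the index count rather than by directly comparing $\Delta_g$ and $\Delta_{g_\ca}$ on a common enlarged domain, which a compact-perturbation argument would implicitly require.)
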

  
\begin{proof} 
	By the maximal principle, this operator is injective, therefore it is sufficient to show the surjectivity.
	It is enough to show that $\Delta_{g}: C_{2+\beta,0}^{k+2,\alpha}\rightarrow C^{k,\alpha}_\beta$ is a Fredholm operator with index bounded below by $-\operatorname{dim}(\mathcal P)-\operatorname{dim}(\mathcal H)-1$.
	
	 Note that by the Schauder estimate, the operator $\Delta_{g_{\ca}}: C_{2+\beta,0}^{k+2,\alpha}\rightarrow C^{k,\alpha}_\beta$ has closed image. Let $V$ denote the vector space spanned by $r^{\mu_i^+}\phi_i$, $\mu_i^+\in (0,2]$. Then we are going to show that 
	\begin{equation}\label{eq--isomorphism}
		\Delta_{g_{\ca}}: C_{2+\beta,0}^{k+2,\alpha}\oplus\chi \cdot V\rightarrow C^{k,\alpha}_\beta
	\end{equation}
 is an isomorphism. It suffices to show the surjectivity. By Proposition \ref{eq--right inverse}, for any $f\in C^{k,\alpha}_{\beta}$, there exists $\bar u\in  C_{2+\beta}^{k+2,\alpha}$ such that	\begin{equation*}
		\Delta_{g_\ca}\bar u=f, \quad \text { and }\quad \|\bar u\|_{C^{k+2,\alpha}_{2+\beta}(B_1)}\leq C \|f\|_{C^{k,\alpha}_{\beta}(B_1)}.
	\end{equation*}
 Expand $\bar u$ into Fourier series.
\begin{equation}
	\bar u=\sum_{\mu_i^+>0}\bar u_i(r)\phi_i(y).
\end{equation}Consider the following function $u$  
\begin{equation*}
	\sum_{\mu_i^+>2}(\bar u_i(r)-\bar u_i(1)r^{\mu_i^+})\phi_i(y)+\sum_{\mu_i^+\in (0,2]}(\bar u_i(r)-(1-\chi)\cdot \bar u(1)  r^{\mu_i^+})\phi_i(y)+\chi \cdot \sum_{\mu_i^+\in (0,2]}\bar u_i(1)r^{\mu_i^+}\phi_i.
\end{equation*}Then we know $u\in C^{k+2,\alpha}_{2+\beta,0}$ and $\Delta_{g_\ca} u=\Delta_{g_\ca}\bar u=f$. By Theorem \ref{thm--HS-harmonic functions}, we have $$\dim V=\operatorname{dim}(\mathcal P)+\operatorname{dim}(\mathcal H)+1,$$ therefore $\Delta_{g_\ca}$ has index $-\operatorname{dim}(\mathcal P)-\operatorname{dim}(\mathcal H)-1$.

It is proved in \cite[Theorem 6.1]{LockOwen} that $\Delta_{g}$ is Fredholm and has the same index as $\Delta_{g_\ca}$. We give a sketch of the proof here. As $g$ is conical, by writing $g(\cdot,\cdot)=g_\ca(\theta \cdot,\cdot)$ and consider $g_s:=g_\ca(\theta^s \cdot,\cdot)$ for $s\in [0,1]$ we have a smooth family of conical metrics connecting $g_\ca$ and $g$. As the index of Fredhlom operators is a continuous function. it is enough to show that for every conical metric $g$, $\Delta_g: C_{2+\beta,0}^{k+2,\alpha}(B_1)\rightarrow C^{k,\alpha}_\beta(B_1)$ is Fredholm. 

By solving Dirichlet problem on $B_1\setminus B_{\epsilon}$ and taking limits and using Schauder estimate, we know that there exists a bounded operator $P_1:C^{k,\alpha}_\beta\rightarrow C^{k+2,\alpha}_{0,0}$ such that $\Delta_g\circ P_1=\operatorname{Id}$.
For $r\in (0,1)$, let $\rho_r$ denote the cut-off function which equals 1 on $B_1\setminus B_r$ and has support on $B_1\setminus B_{\frac{r}{2}}$. Then consider the operator 
\begin{equation}\Delta':=\Delta_{g_\ca}+(1-\rho_r)(\Delta_g-\Delta_{g_\ca}).
\end{equation} As $g$ is conical, $\Delta'-\Delta_{g_\ca}$ has sufficiently small operator norm  if  $r$ sufficiently small. Fix an $r$ sufficiently small such that $\Delta'$ is Fredholm. Let $P_2$ denote a Fredholm inverse of $\Delta'$, that is both $\operatorname{Id}-\Delta'\circ P_2$ and $\operatorname{Id}-P_2\circ \Delta'$ are compact operators.  We are going to use $P_1$ and $P_2$ to construct a Fredholm inverse of $\Delta_g$, thereby proving  it is Fredholm.

Let $\chi_1$ be a cut-off function, which has support on $B\setminus B_{\frac{r}{4}}$ and equals 1 on $\operatorname{supp}(\rho_{\frac{r}{2}})$. Let $\chi_2$ be a cut-off function, which has support on $B_{\frac{r}{2}}$ and equals 1 on $\operatorname{supp}(1-\rho_{\frac{r}{2}})$. Then one can check the following operator
\begin{equation}
	T(f):=\chi_1 P_1(\rho_{\frac{r}{2}}f)+\chi_2P_2((1-\rho_{\frac{r}{2}})f) 
\end{equation}making
$\operatorname{Id}-\Delta_g\circ T$ and  $\operatorname{Id}-T\circ \Delta_{g}$ compact operators. Let us check this property for $\operatorname{Id}-\Delta_g\circ T$ and the other is similar. First notice that the inclusion operator $C^{l,\alpha}_\beta\rightarrow C^{k,\alpha}_\beta$ is compact for $l>k$. Then 
\begin{equation}
	f-\Delta_g\circ T(f)=f-\chi_1\rho_{\frac{r}{2}}f+\chi_2\Delta_g\circ P_2((1-\rho_{\frac{r}{2}})f)+K(f)
\end{equation}for some compact operator $K$. Notice that on $\operatorname{supp}(\chi_2)$, $\Delta'=\Delta_g$, since $\operatorname{Id}-\Delta'\circ P_2$ is a compact operator, we obtain that 
 \begin{equation}
	f-\Delta_g\circ T(f)=f-\chi_1\rho_{\frac{r}{2}}f+\chi_2(1-\rho_{\frac{r}{2}})f+K'(f)
\end{equation}for some compact operator $K'$.  Notice that $\chi_1\equiv 1$ on $\operatorname{supp}(\rho_{\frac{r}{2}})$ and $\chi_2$ equals 1 on $\operatorname{supp}(1-\rho_{\frac{r}{2}})$, therefore we get $\operatorname{Id}-\Delta_g\circ T=K'$ is a compact operator.
\end{proof}

\section*{Appendix 2: Diameter bounds for K\"ahler manifolds with boundary}
The results in this section follow essentially from \cite{GPSS22,GPSS23}.   For readers' convenience, we give a sketch of the proof and mention some modifications needed for manifolds with boundary. Results here are not optimal, but are sufficient for our purpose. 

%We remark that results in this section would hold in more general setting.

We recall the setting considered in Section \ref{Section--local}. That is we have a normal Stein space $(U,p)$ with an isolated log terminal singularity, embedded into $(\mathbb C^N,0)$ with $\partial U$ being strictly pseudoconvex. We have a smooth function $f_1$ which is pluriharmonic in a neighborhood of $p$. For some $\lambda \in  \{-1,0\}$, we have a family of K\"ahler metrics with uniformly bounded potential $\omega_t=\partial\pp \varphi_t$, $t\in [0,1]$, satisfying
 \begin{equation}\label{eq--continuity path in appendix}
 	\begin{cases}
		(\ii\partial\pp \varphi_t)^n=\exp(-\lambda t(\varphi_t+f_1))\omega_0^n \\
		\varphi_t|_{\partial U}=\varphi_{KE}.
\end{cases}
\end{equation}
Moreover we know that each $\omega_t$ is conical at $p$.
The main result in this appendix is the following uniform diameter estimate.

 \begin{proposition}\label{prop--diameter bound}
 	There exists a constant $C$ independent of $t$ such that 
 	\begin{equation}
 		\operatorname{diam}(U,\omega_t)\leq C
 	\end{equation}
 \end{proposition}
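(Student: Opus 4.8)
The plan is to adapt the Green's function / capacity method of Guo--Phong--Song--Sturm \cite{GPSS22,GPSS23} to the boundary setting, using in an essential way that each $\omega_t$ is already known to be conical at $p$ and that we have uniform $L^\infty$ control on $\varphi_t$ together with the uniform interior estimates \eqref{eq--uniform higher estimate} away from $p$. First I would reduce the problem to a small fixed neighborhood $V$ of $p$: away from $p$ the metrics $\omega_t$ are uniformly equivalent to $\omega_0$ by \eqref{eq--uniform higher estimate}, so $\operatorname{diam}(U\setminus V,\omega_t)$ is uniformly bounded and it suffices to bound $\sup_{x\in V}d_{\omega_t}(p,x)$. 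The equation can be rewritten as $(\ii\partial\pp\varphi_t)^n = e^{F_t}\omega_0^n$ with $F_t = -\lambda t(\varphi_t+f_1)$; since $\lambda\in\{-1,0\}$, $\|\varphi_t\|_{L^\infty}\le C_0$, and $f_1$ is fixed, we have $\|F_t\|_{L^\infty}\le C$ and hence a two-sided bound $C^{-1}\omega_0^n \le \omega_t^n \le C\omega_0^n$ on $V$ — in particular a uniform volume upper bound $\operatorname{Vol}(V,\omega_t)\le C$.

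Next I would set up the key integral estimate. Following \cite{GPSS23}, for a point $x\in V$ one controls the distance $d_{\omega_t}(p,x)$ by integrating $|\nabla_{\omega_t}\rho|$ along a path, where $\rho$ is a carefully chosen auxiliary function (a regularized distance, or simply the smooth radial function $r_Z$ coming from the conical structure). The heart of the argument is a Sobolev-type / De Giorgi iteration estimate: using the complex Monge--Amp\`ere equation together with the auxiliary function $\psi = \psi(\varphi_t,\rho)$ built in the GPSS scheme, one derives that $\int_V (-u_t)^p\,\omega_t^n$ is bounded independently of $t$ for a suitable exponent $p$, where $u_t$ solves an auxiliary Monge--Amp\`ere equation with the distance-type right-hand side; this is where the uniform $L^\infty$ bound on $\varphi_t$ and the volume bound feed in. The conical structure of $\omega_t$ near $p$ guarantees that the relevant integrals over small balls $B(p,s)$ behave like $s^{2n}$ (volume non-collapsing and controlled geometry), which is exactly what is needed to run the iteration up to and including the vertex $p$, and to handle the fact that $p$ is a genuine singular point of $U$ rather than a smooth interior point.

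For the boundary $\partial U$, the modification is mild: since $\varphi_t|_{\partial U}=\varphi_{KE}$ is a fixed function and $\omega_t$ is uniformly controlled in a fixed collar neighborhood of $\partial U$ by \eqref{eq--uniform higher estimate}, one can either cut off the GPSS auxiliary functions to be supported away from $\partial U$, or work on a slightly smaller domain $U'\Subset U$ with $V\Subset U'$; the distance from $p$ to $\partial U'$ in the $\omega_t$-metric is then controlled by the interior estimates. The main obstacle I anticipate is the interplay between the GPSS iteration and the singular point $p$: one must verify that the auxiliary Monge--Amp\`ere problem can be solved (or approximated) on the singular Stein space $(U,p)$ with the required uniform estimates, and that the capacity/energy estimates are insensitive to $p$. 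This is precisely where ``each $\omega_t$ is conical at $p$'' is used — it upgrades the a priori $L^\infty$ and volume bounds to the controlled local geometry (Bishop--Gromov volume comparison, uniform Sobolev inequality on small balls) that the GPSS argument requires — and once this is in place the rest follows the template of \cite[Theorem 1.2]{GPSS23} verbatim, yielding the uniform bound $\operatorname{diam}(U,\omega_t)\le C$.
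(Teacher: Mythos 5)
Your overall route---adapting the Guo--Phong--Song--Sturm Green-function/Sobolev method, using the uniform potential bound from \eqref{eq--uniform Linfinity estimate}, handling $\partial U$ via the fixed Dirichlet data and interior estimates, and invoking the conical structure to deal with the point $p$---is the same as the paper's. However, one step as you state it would fail: you use ``each $\omega_t$ is conical at $p$'' to supply \emph{uniform in $t$} controlled local geometry near $p$ (volume of $B(p,s)$ behaving like $s^{2n}$, a ``uniform Sobolev inequality on small balls''). Conicality is a qualitative statement for each fixed $t$: the rate, the neighborhood and the constants in Definition \ref{def--conical} depend on $t$ and may a priori degenerate along a sequence $t_i\to T$ (uniform conical control near $p$ is essentially what the closedness argument is trying to establish, so assuming it here is circular). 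Hence no uniform non-collapsing or Sobolev constant at $p$ can be extracted from conicality alone, and running the iteration ``up to and including the vertex'' is not justified this way; Bishop--Gromov likewise plays no role in this step.

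The paper's resolution is to obtain every uniform constant from the potential bound alone: one first proves a uniform $L^{1+\epsilon_0}$ bound for the Dirichlet Green functions $G_{\omega,\delta}$ on the exhaustion $U_\delta$ (Lemma \ref{lemma-higher integral bound}), by testing against solutions of auxiliary complex Monge--Amp\`ere equations whose right-hand sides involve truncated Green functions and invoking the uniform $L^\infty$ estimates of \cite{GGZ23}; this yields the Sobolev inequality of Proposition \ref{prop--sobolev inequality} with constant independent of $t$ and $\delta$. Conicality of each fixed $\omega_t$ is then used only qualitatively, as a zero-capacity statement: one builds cutoffs $\eta_\epsilon$ with $\int_U|\nabla\eta_\epsilon|^2_\omega\,\omega^n<\epsilon$ as in \eqref{eq--estimate for cut-off function}, so that the Sobolev inequality extends to test functions whose support contains $p$. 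The uniform volume non-collapsing \eqref{eq--weak non-collapsing} (with exponent $\frac{2q}{q-1}$, not $2n$) is a \emph{consequence} of this Sobolev inequality via the iteration over radii, and the diameter bound follows by contradiction from the uniform volume upper bound together with the interior estimates away from $p$. If you recast your use of conicality in this qualitative way and supply the Green-function step that actually produces the uniform Sobolev constant, your argument coincides with the paper's.
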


In the following, we will omit the index $t$ and just write $\omega_t$ as $\omega$. It is clear from the statement and proof that the estimates in the following results are uniform for $t$ and only depend on the estimates in \eqref{eq--uniform Linfinity estimate}.
Let $\omega_{E}=\ii\partial\pp \varphi_E$ be the restriction of the Euclidean metric on $\mathbb C^N$ and let $e^F=\frac{\omega_0^n}{\omega_E^n}.$ By our assumption, we know that there exists a constant $A$ and $p>1$ such that 
\begin{equation}\label{eq--uniform Linfinity estimate}
\begin{aligned}
	\|\varphi_t\|_{L^{\infty}}+\|f_1\|_{L^{\infty}}&\leq A,\\
\int_Ue^{pF}\omega_E^n&\leq A.
\end{aligned}
\end{equation}

%  Moreover we use $\omega_t$ is polynomially close to the cone $\ca$. 

\subsection*{Green functions and Sobolev inequalities} Firstly we recall some basic properties for Green functions on manifolds with boundary \cite[Chapter 4]{Aubin-book}. Let $G_{\omega}(x,y)$ denote the Green function for the Laplacian $\Delta_{\omega}$ on $X$ which is a manifold with boundary. It satisfies the following
\begin{itemize}
	\item [(1)] $G_{\omega}$ is defined and smooth on $\ols X\times \ols X\setminus \{(x,x):x\in X\}$ and $G_{\omega}(x,y)=G_{\omega}(y,x)$.
	\item [(2)] $G_{\omega}(x,y)\geq 0$ and equals zero if and only if $x\in \partial X$ or $y\in \partial X$.
	\item [(3)]For every $u\in C^2(\ols{X})$, we have 
	 \begin{equation}\label{eq--green formula}
	 \begin{aligned}
	 	u(x)&=-\int_X G_{\omega}(x, y) \Delta_{\omega} u(y)dV-\int_{\partial X}\partial _{\vec n}G_{\omega}(x,y)u dS\\
	 	&=\int_{X}\left<\nabla G_{\omega},\nabla u\right>dV-\int_{\partial X}\partial _{\vec{n}}G_{\omega}u dS,
	 \end{aligned}
	\end{equation} where $\vec{n}$ is the unit out-normal vector of the boundary and $dS$ is the volume element  induced by the Riemannian metric on the boundary.
\end{itemize} 
Let $U_{\delta}$ denote an exhaustion of $U\setminus\{p\}$ with smooth boundary. For example, we can choose 
\begin{equation*}
	U_{\delta}=U\cap \{|z|>\delta\}.
\end{equation*}Let $G_{\omega,\delta}(x,y)$ denote the Green function for the Laplacian $\Delta_{\omega}$ on the region $U_{\delta}$.

\begin{lemma}\label{lemma-higher integral bound}
	There exists constants $\epsilon_0=\epsilon_0(n,p)$ and $C_1=C_1(n,p,A,\epsilon_0)$ such that for all $\delta>0$, we have
	\begin{equation}\label{L1+epslion estimate}
		\sup_{x\in U_\delta}\int_{U_\delta}G_{\omega,\delta}(x,\cdot)^{1+\epsilon_0}\omega^n\leq C_1.
	\end{equation}
\end{lemma}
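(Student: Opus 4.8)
The plan is to follow the Green's function estimates of Guo--Phong--Song--Sturm \cite{GPSS22,GPSS23}, adapted to the Dirichlet problem on the exhausting domains $U_\delta$. Write $\omega=\omega_t$ and, for a fixed $x\in U_\delta$, set $G=G_{\omega,\delta}(x,\cdot)\geq 0$; recall $G$ vanishes on $\partial U_\delta$. From \eqref{eq--continuity path in appendix} and \eqref{eq--uniform Linfinity estimate} one has $\omega^n=e^H\omega_0^n$ with $\|H\|_{L^\infty}\leq A$, hence by the arithmetic--geometric mean inequality $\operatorname{tr}_\omega\omega_0\geq n(\omega_0^n/\omega^n)^{1/n}\geq ne^{-A/n}=:c_0>0$. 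A preliminary $L^1$ bound for $G$ follows by a barrier argument: the solution $w$ of $\Delta_\omega w=-1$ on $U_\delta$ with $w|_{\partial U_\delta}=0$ satisfies $\int_{U_\delta}G\,\omega^n=w(x)$ by \eqref{eq--green formula}, while $\phi:=c_0^{-1}(\sup_U\varphi_0-\varphi_0)\geq 0$ obeys $\Delta_\omega\phi=-c_0^{-1}\operatorname{tr}_\omega\omega_0\leq-1$ with non-negative boundary values, so the maximum principle gives $w\leq\phi\leq c_0^{-1}\operatorname{osc}_U\varphi_0=:C_*$, with $C_*$ independent of $\delta,t,x$. By Hölder's inequality, $\int_{U_\delta}G^s\omega^n\leq C_*^{\,s}\operatorname{Vol}(U_\delta,\omega)^{1-s}$ for $s\in(0,1]$, and $\operatorname{Vol}(U_\delta,\omega)$ is bounded above by \eqref{eq--uniform Linfinity estimate} and below by a positive constant, since $\omega^n\geq e^{-A}\omega_0^n$ and $\omega_0$ is a fixed metric conical at $p$.

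Fix now $\epsilon_0=\epsilon_0(n,p)$ with $n\epsilon_0 p<1$ (in particular $\epsilon_0<\tfrac1{n-1}$, so that $G^{n\epsilon_0}$ is integrable). For each $L>0$, solve the Dirichlet problem $(\omega+\ii\partial\pp\psi_L)^n=\min(G,L)^{n\epsilon_0}\,\omega^n$ with $\psi_L|_{\partial U_\delta}=0$; writing $\omega=\ii\partial\pp\varphi_t$, this is a standard complex Monge--Amp\`ere Dirichlet problem on the strictly pseudoconvex domain $U_\delta$ with bounded right-hand side, and since $\min(G,L)=L$ near the pole $x$ the solution $\psi_L$ is smooth there. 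The density of the right-hand side relative to the fixed conical background $\omega_0$ is at most $e^A G^{n\epsilon_0}$, and $\int_{U_\delta}G^{n\epsilon_0 p}\omega_0^n\leq e^A\int_{U_\delta}G^{n\epsilon_0 p}\omega^n\leq e^A C_*^{\,n\epsilon_0 p}\operatorname{Vol}(U_\delta,\omega)^{1-n\epsilon_0 p}$ is uniformly bounded by the previous paragraph; hence the uniform $L^\infty$ estimate of \cite{GPSS22,GPSS23} (applied with background $\omega_0$, and checked to be uniform over the exhaustion) gives $\|\psi_L\|_{L^\infty(U_\delta)}\leq C=C(n,p,A,\epsilon_0)$ independently of $L,\delta,t,x$. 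Applying the representation formula \eqref{eq--green formula} to $\psi_L$, whose boundary term vanishes since $\psi_L|_{\partial U_\delta}=0$, writing $\Delta_\omega\psi_L=\operatorname{tr}_\omega(\omega+\ii\partial\pp\psi_L)-n$ and using the arithmetic--geometric mean inequality $\operatorname{tr}_\omega(\omega+\ii\partial\pp\psi_L)\geq n\min(G,L)^{\epsilon_0}$, one obtains
\begin{equation*}
n\int_{U_\delta}G\,\min(G,L)^{\epsilon_0}\,\omega^n\leq-\psi_L(x)+n\int_{U_\delta}G\,\omega^n\leq\|\psi_L\|_{L^\infty}+nC_*\leq C+nC_*.
\end{equation*}
Letting $L\to\infty$ and invoking monotone convergence yields $\int_{U_\delta}G^{1+\epsilon_0}\omega^n\leq\tfrac1n(C+nC_*)=:C_1$, which is the asserted estimate.

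The main obstacle is the uniform $L^\infty$ estimate in the second paragraph: one must verify that the constant in the Monge--Amp\`ere $L^\infty$ bound for the Dirichlet problem on $U_\delta$ does not degenerate as $\delta\to0$, despite the background $\omega_0$ being only conical near $p$ and the domains $U_\delta$ exhausting a punctured neighbourhood of the singular point. This is precisely where the hypotheses \eqref{eq--uniform Linfinity estimate} — in particular the $L^p$-integrability with $p>1$, together with the choice of $\epsilon_0$ small depending only on $n$ and $p$ — enter the argument; everything else is a routine application of the maximum principle, Hölder's inequality, and the Green representation formula.
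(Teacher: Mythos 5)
Your strategy is the right one (following Guo--Phong--Song--Sturm), and Step 1 is essentially equivalent to the paper's; the serious problem lies in the central claim of Step 2. You solve the Monge--Amp\`ere Dirichlet problem
\[
(\omega+\ii\partial\pp\psi_L)^n=\min(G,L)^{n\epsilon_0}\,\omega^n,\qquad \psi_L|_{\partial U_\delta}=0,
\]
on $U_\delta=U\cap\{|z|>\delta\}$ and assert that "this is a standard complex Monge--Amp\`ere Dirichlet problem on the strictly pseudoconvex domain $U_\delta$." But $U_\delta$ is \emph{not} pseudoconvex: its inner boundary $\{|z|=\delta\}$ has strictly negative Levi form from inside $U_\delta$. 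Consequently, neither the existence of a solution nor the $L^\infty$ estimate of \cite{GPSS22,GPSS23,GGZ23} is standard here. Those $L^\infty$ estimates are proved on compact K\"ahler manifolds or on (strictly) pseudoconvex domains, precisely because the comparison/barrier constructions behind them need pseudoconvexity; moreover, the inner boundary shrinks to the singular point $p$ as $\delta\to 0$, so the parenthetical remark that the constant is "checked to be uniform over the exhaustion" is asserted, not proved, and would in fact be the hardest step.

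The paper avoids this issue entirely by solving the auxiliary Monge--Amp\`ere equation on the \emph{fixed} strictly pseudoconvex domain $U$, with $\psi_k|_{\partial U}=0$ and right-hand side a truncation/smoothing of $G$ extended by zero to $U$; the constant from \cite{GGZ23} is then manifestly uniform. One then cannot apply the Green representation on $U_\delta$ directly to $\psi_k$ (it does not vanish on the inner sphere), so the paper introduces the intermediate function $v_k$ solving $\Delta_\omega v_k=H_k^{\epsilon_0}$ on $U_\delta$ with $v_k|_{\partial U_\delta}=0$, and transfers the bound from $\psi_k$ to $v_k$ by the maximum principle (using the arithmetic--geometric mean inequality to show $\Delta_\omega\psi_k\geq c\,H_k^{\epsilon_0}$). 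Alternatively, you could keep your more direct use of the Green formula by observing that, on $\{|z|=\delta\}$, $\partial_{\vec n}G\leq 0$ and $\psi_k\leq 0$ so the extra boundary term has a favorable sign; but in either variant the Monge--Amp\`ere equation must be solved on $U$, not on $U_\delta$. As written, your proof has a genuine gap at exactly this point. (A secondary, minor issue: you bound the $L^p$ norm of the density against $\omega_0^n$, whereas the hypothesis \eqref{eq--uniform Linfinity estimate} and the estimate of \cite{GGZ23} are formulated against $\omega_E^n$; this can be reconciled via $e^F=\omega_0^n/\omega_E^n$, but should be stated.)
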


\begin{proof}
	\noindent\textbf{Step 1.} Let $v$ solve the Laplacian equation	
	\begin{equation*}
		\Delta_\omega v=1 \text{ in } U_\delta \quad \text{ and }\quad v=0 \text{ on }\partial U_\delta.
	\end{equation*}
	By the maximal principle, we have $\|v\|_{L^\infty}\leq \frac{2}{n}\|\varphi_t\|_{L^{\infty}}\leq A$. Applying the Green formula  \eqref{eq--green formula} to $v$, we obtain
	 \begin{equation}\label{eq--L1 estimate of green function}
		\sup_{x\in U_{\delta}}\int_{U_{\delta}}G_{\omega,\delta}(x,\cdot)\omega^n\leq A.	\end{equation}	
	\noindent\textbf{Step 2.} Fix a large number $k$ and consider a smooth positive function $H_k$ which is a smoothing of $\min\{G_{\omega,\delta}(x,\cdot ),k\}$. Then by \eqref{eq--L1 estimate of green function} and \eqref{eq--uniform Linfinity estimate}, and H\"older inequality, we obtain there exist constant $\epsilon_0, A_0$ and depends only on $p,n,$ and $A$ such that 
	\begin{equation}\label{eq--lp control}
		\int_U(H^{n\epsilon_0}_ke^F)^{p}\omega_E^n\leq A_0. 
	\end{equation}
	Let $\psi_k$ be the solution of the following complex Monge-Amp\`ere equation
	\begin{equation}
	\begin{cases}
			(\ii\partial\pp \psi_k)^n=H_k^{n\epsilon_0}e^F\omega_E^n \text{ in } U\\
			\psi_k|_{\partial U}=0.
	\end{cases}
	\end{equation} Then by \cite{GGZ23} and \eqref{eq--lp control}, we have a uniform $L^{\infty}$ bound on $\psi_k$. Let $v_k$ solves the equation
	\begin{equation*}
		\Delta_\omega v_k=H_k^{\epsilon_0}\text{ in } U_\delta \quad \text{ and }\quad v=0 \text{ on }\partial U_\delta.
	\end{equation*} By the maximal principle, 
	\begin{equation}
		\|v_k\|_{L^\infty}\leq C \|\psi_k\|_{L^\infty}\leq C.
	\end{equation}Applying the Green formula to $v_k$ and letting $k\rightarrow \infty$, we obtain \eqref{L1+epslion estimate}.
\end{proof}

\begin{proposition}\label{prop--sobolev inequality}
	There exist constants $q=q(n,p)$ and $C_2=C_2(\epsilon_0,n,p,C_1,q)$ such that for $u\in W^{1,2}_0(U_{\delta})$, we have 
	\begin{equation}\label{eq--sobolev inequality}
		\left(\int_{U_\delta}|u|^{2q}\omega^n\right)^{\frac{1}{q}}\leq C\int_{U_\delta}|\nabla u|_{\omega}^2\omega^n.
	\end{equation}
\end{proposition}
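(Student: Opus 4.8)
The plan is to derive \eqref{eq--sobolev inequality} from two uniform ingredients: the $L^{1+\epsilon_0}$-bound on the Dirichlet Green function $G_{\omega,\delta}$ established in Lemma \ref{lemma-higher integral bound}, and the uniform volume bound $\operatorname{Vol}(U_\delta,\omega_t)\le C$, which follows from \eqref{eq--continuity path in appendix} and \eqref{eq--uniform Linfinity estimate} together with $\lambda\in\{-1,0\}$, since $\omega_t^n=\exp(-\lambda t(\varphi_t+f_1))\omega_0^n\le e^{2A}\omega_0^n$. This is precisely the situation handled in \cite{GPSS22}: a uniform $L^p$-bound with $p=1+\epsilon_0>1$ on the Green function, together with a uniform volume bound, yields a Sobolev inequality $\|u\|_{L^{2q}}^2\le C\|\nabla u\|_{L^2}^2$ for $u\in W^{1,2}_0$, with $q=q(n,p)>1$ determined by $\epsilon_0$; the underlying mechanism is the equivalence between such Green-function bounds and on-diagonal heat-kernel bounds.

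Concretely, I would fix $\delta$, work on the compact manifold-with-boundary $U_\delta$, and by density reduce to $u\in C_c^\infty(U_\delta)$ with $u\ge 0$. The Green representation formula \eqref{eq--green formula} gives $u(x)=\int_{U_\delta}\langle\nabla_y G_{\omega,\delta}(x,y),\nabla_y u(y)\rangle\,\omega^n$, with no boundary contribution since $G_{\omega,\delta}(x,\cdot)$ vanishes on $\partial U_\delta$. One then runs the standard De Giorgi--Nash--Moser iteration on powers of $u$: at each step the $L^{1+\epsilon_0}$-control of $G_{\omega,\delta}$ furnished by \eqref{L1+epslion estimate} (which is uniform in $\delta$) bounds the relevant truncated Green potentials, and summing the resulting geometric series yields \eqref{eq--sobolev inequality} with $C_2=C_2(\epsilon_0,n,p,C_1,q)$, in particular independent of $\delta$ and of $t$. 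The deviations from \cite{GPSS22} are purely bookkeeping: the Green function is the Dirichlet one on $U_\delta$, whose boundary $\partial U_\delta=\partial U\sqcup(U\cap\{|z|=\delta\})$ contributes nothing because we test against $W^{1,2}_0$-functions, and one carries the constants through the iteration to confirm their $\delta$-independence.

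The only genuine obstacle is the uniformity in $\delta$ as $\delta\to 0$, and it has already been secured upstream: Lemma \ref{lemma-higher integral bound} delivers $\epsilon_0$ and $C_1$ depending only on $n$, $p$ and $A$, using nothing about $U_\delta$ beyond $U_\delta\subset U$ and the exhaustion property. Hence the Sobolev constant inherited from the iteration is uniform, which is exactly what is needed to run the Moser iteration for the diameter estimate of Proposition \ref{prop--diameter bound} and to let $\delta\to 0$ afterwards.
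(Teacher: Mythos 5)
Your identification of the two ingredients---the $L^{1+\epsilon_0}$ bound on the Dirichlet Green function from Lemma \ref{lemma-higher integral bound} and a uniform volume bound to absorb the $+1$---is correct, but the mechanism you propose does not work as stated. De Giorgi--Nash--Moser iteration takes a Sobolev or Nash inequality as \emph{input} and bootstraps it; running it on powers of $u$ to produce the Sobolev inequality itself is circular unless you supply the seed, which you do not. The ``truncated Green potential / geometric series'' sketch is likewise not what is happening here, and you never say how the gradient of $u$ actually enters---the Green representation only pairs $\nabla u$ against $\nabla_y G$, so some estimate on $\nabla_y G$ is unavoidable, and that estimate is the missing idea.

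The paper's argument is a single, iteration-free step built on a weighted gradient bound for the Green function. Applying \eqref{eq--green formula} first to a constant (giving $\int_{\partial U_\delta}\partial_{\vec n}G_{\omega,\delta}\,dS=-1$) and then to the test function $(G_{\omega,\delta}(x,\cdot)+1)^{-\beta}$, which vanishes at the pole and equals $1$ on $\partial U_\delta$, yields
\begin{equation*}
\sup_{x\in U_\delta}\int_{U_\delta}\frac{\left|\nabla_y G_{\omega,\delta}(x,\cdot)\right|^2_{\omega}}{\left(G_{\omega,\delta}(x,\cdot)+1\right)^{1+\beta}}\,\omega^n\le\frac{1}{\beta}
\end{equation*}
for every $\beta>0$. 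Then the Green formula for $u\in C^\infty_c(U_\delta)$ (no boundary term), together with Cauchy--Schwarz split against the weight $(G+1)^{\pm(1+\beta)/2}$, gives the pointwise bound $|u(x)|^2\le\beta^{-1}\int_{U_\delta}(G_{\omega,\delta}(x,\cdot)+1)^{1+\beta}|\nabla u|^2\,\omega^n$. Raising to the power $q$, integrating in $x$, and applying Minkowski's integral inequality moves the $x$-integral inside; choosing $\beta=\epsilon_0/2$ and $q=(1+\epsilon_0)/(1+\epsilon_0/2)>1$ makes $(1+\beta)q=1+\epsilon_0$, so Lemma \ref{lemma-higher integral bound} (plus the volume bound) controls the inner integral uniformly in $x$ and $\delta$. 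You should replace the iteration sketch with this weighted gradient estimate and the Cauchy--Schwarz/Minkowski chain; everything else in your setup (density reduction, vanishing boundary terms, $\delta$-uniformity inherited from Lemma \ref{lemma-higher integral bound}) is fine.
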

\begin{proof}
	It is sufficient to prove this for $u\in C^{\infty}_c(U_\delta)$. Applying \eqref{eq--green formula} to a constant function and then applying the Green formula \eqref{eq--green formula} to $(G_{\omega,\delta}(x,\cdot)+1)^{-\beta}$, for any $\beta>0$, we get 
	\begin{equation}\label{eq--gradient bound}
		\sup _{x \in U_{\delta}} \int_{U_\delta} \frac{\left|\nabla_y G_{\omega,\delta}(x, \cdot)\right|_{\omega}^2}{(G_{\omega,\delta}(x, \cdot)+1)^{1+\beta}} \omega^n\leq \frac{1}{\beta}.
	\end{equation}
	Applying \eqref{eq--green formula} to $u$ and using H\"older inequality and \eqref{eq--gradient bound}, we obtain that for any $\beta,q>0$, we have 
	\begin{equation}
		|u(x)|^{2q}\leq \frac{1}{\beta^{q}}\left(\int_{U_\delta}(G_{\omega,\delta}(x,y)+1)^{1+\beta}|\nabla u(y)|^2\omega^n(y)\right)^{q}.
	\end{equation}
	Integrating over  $U_\delta$ and using Minkowski inequality, one obtains that 
	\begin{equation}\label{eq--almost sobolev}
		\left(\int_{U_\delta}|u|^{2q}\omega^n\right)^{\frac{1}{q}}\leq \frac{1}{\beta} \int_{U_\delta}\left(\int_{U_\delta} (G_{\omega,\delta}(x,y)+1)^{(1+\beta)q}\omega^n(x)\right)^{\frac{1}{q}}|\nabla u|^2\omega^n(y).
	\end{equation}
Let $\epsilon_0$ be the constant obtained in Lemma \ref{lemma-higher integral bound} and choose $\beta=\frac{\epsilon_0}{2}$ and $q=\frac{1+\epsilon_0}{1+\epsilon_0/2}>1$. Then \eqref{eq--sobolev inequality} follows from \eqref{eq--almost sobolev} and Lemma \eqref{lemma-higher integral bound}.
\end{proof}

%\begin{remark}
%	For any open subset $V\subset U_{\delta}$, we have natural inclusion $C_c^{\infty}(V)\subset C_c^{\infty}(U_\delta)$, therefore we get for every $u\in W^{1,2}_0(V)$, we have
%	\begin{equation}\label{eq--sobolev inequality on subsets}
%		\left(\int_V|u|^{2q}\omega^n\right)^{\frac{1}{q}}\leq C\int_V|\nabla u|_{\omega}^2\omega^n.
%	\end{equation}
%	\end{remark}
%%

\subsection*{Diameter bound} Then we are ready to prove Proposition \ref{prop--diameter bound} following \cite{GPSS22,GPSS23}.

\vspace{0.2cm}

 \noindent\textit{Proof of Proposition \ref{prop--diameter bound}.} 
 Recall that we have uniform high regularity estimate for $\omega_t$ away from $p$ and each $\omega_t$ is conical at $p$.
 	Fix a small neighborhood $V$ of $p$. Without lose of generality, we may assume $\operatorname{dist}(x,\partial U)\geq 1$ for any $x\in V$.
 	
 	\noindent \textbf{Step 1.} Let $\rho:\mathbb R\rightarrow [0,\infty)$ be a cut-off function with $\rho=1$ on $(-\infty, 1/2)$ and $\rho=0$ on $[1,\infty)$. For any $x\in V\setminus \{p\}$ and $r\in (0,1)$. Let $u(y)= \rho(d_{\omega}(x,y)/r)$. We are going to show 	
 	\begin{equation}\label{eq--want sobolev}
 		\left( \int_{B_\omega(x, r)}|u|^{2 q} \omega^n\right)^{1 / q} \leq C_2 \int_{B_\omega(x, r)}|\nabla u|_\omega^2 \omega^n.
 	\end{equation}where $q$ and $C_2$ are the constants in Proposition \ref{prop--sobolev inequality}. 
 	If $p\notin \olsi B_{\omega}(x,r)$ then this follows from Proposition \ref{prop--sobolev inequality} directly as $B_{\omega}(x,r)\in U_\delta$ for some $\delta$ sufficiently small. In general, we can use a cut-off argument as follows. Since $\omega_t$ is conical at $p$, for every $\epsilon>0$ sufficiently small and compact set $F\subset U\setminus \{p\}$, there exist cut-off function  $\eta_{\epsilon}:U\rightarrow [0,1]$ such that $\eta_\epsilon\equiv 1$ on $F$, $\eta_{\epsilon}=0$ in a neighborhood of $p$ and moreover 
 	\begin{equation}\label{eq--estimate for cut-off function}
 		\int_U|\nabla \eta_\epsilon|_\omega^2 \omega^n<\epsilon.
 	\end{equation} Since $\eta_{\epsilon}u$ has support on $U\setminus \{p\}$, we obtain 
 	\begin{equation*}
 		\left(\int_{B_\omega(x,r)}\left|\eta_\epsilon u\right|^{2 q} \omega^n\right)^{\frac{1}{q}} \leq C_2\int_{B_\omega(x,r)}\left|\nabla\left(\eta_\epsilon u\right)\right|_\omega^2 \omega^n.
 	\end{equation*} Since both $u$ and $\|\nabla u\|$ are bounded, letting $\epsilon\rightarrow0$ and using \eqref{eq--estimate for cut-off function}, we get \eqref{eq--want sobolev}.
 	
 	\noindent \textbf{Step 2} By \eqref{eq--want sobolev}, we get for $x\in V\setminus \{p\}$ and $r\in (0,1)$,
 \begin{equation}
 	\left(\frac{\operatorname{Vol}_\omega\left(B_\omega(x, r / 2)\right)}{(r / 2)^{\frac{2 q}{q-1}}}\right)^{1 / q} \leq C\frac{\operatorname{Vol}_\omega\left(B_\omega(x, r)\right)}{r^{\frac{2 q}{q-1}}}.
 \end{equation}Applying this to the radii $r_m=2^{-m}r$ and iterating the estimates, one obtains that there exists a constant $\kappa>0$ such that
 \begin{equation}\label{eq--weak non-collapsing}
 	\operatorname{Vol}_\omega\left(B_\omega(x, r)\right)\geq \kappa  r^{\frac{2 q}{q-1}}
 \end{equation} Approximating $p$ by a sequence $x_i\in V\setminus \{p\}$, one can show the same estimate holds for balls centered at $p$.
 
 \noindent\textbf{Step 3} We can argue by contradiction to show that $\operatorname{diam}(U,\omega_t)$ has a uniform upper bound. Suppose not, since $\omega$ has uniform estimate on $U\setminus V$.  we can only find a geodesic contained in $V$ whose length can be arbitrary large. Then this would contradict with \eqref{eq--weak non-collapsing} as we have uniform volume upper bound for $\omega_t$.
 \qed

\bibliographystyle{plain}
\bibliography{ref.bib}

\end{document}